\documentclass[12pt,reqno]{amsart}
\usepackage{fullpage}
\usepackage{amsmath}
\usepackage{times}
\usepackage{color}
\usepackage{textcomp}
\usepackage{esint}
\usepackage{graphicx}
\newtheorem{theorem}{Theorem}[section]
\newtheorem{lemma}[theorem]{Lemma}
\newtheorem{proposition}[theorem]{Proposition}
\newtheorem{corollary}[theorem]{Corollary}
\theoremstyle{definition}
\newtheorem{definition}[theorem]{Definition}

\newtheorem{remark}[theorem]{Remark}
\numberwithin{equation}{section}

\renewcommand{\mod}[1]{{\ifmmode\text{\rm\ (mod~$#1$)}\else\discretionary{}{}{\hbox{ }}\rm(mod~$#1$)\fi}}
\newcommand{\ep}{\varepsilon}
\newsymbol\nmid 232D

\newcommand{\C}{{\mathbb C}}

\newcommand{\E}{{\mathbb E}}

\newcommand{\R}{{\mathbb R}}

\vfuzz=2pt
\usepackage[colorlinks=true, pdfstartview=FitH, linkcolor=blue, citecolor=blue, urlcolor=blue]{hyperref}
\DeclareMathAlphabet{\mathpzc}{OT1}{pzc}{m}{it}

\begin{document}


\title{Densities in certain three-way prime number races}
\author{Jiawei Lin and Greg Martin}
\address{Department of Mathematics \\ University of British Columbia \\ Room 121, 1984 Mathematics Road \\ Vancouver, BC, Canada \ V6T 1Z2}
\email{jiawei.lin@alumni.ubc.ca}
\email{gerg@math.ubc.ca}
\subjclass[2010]{11N13, 11M26, 11K99, 60F05, 20E07}
\maketitle

\begin{abstract}
Let~$a_1$, $a_2$, and~$a_3$ be distinct reduced residues modulo~$q$ satisfying the congruences $a_1^2 \equiv a_2^2 \equiv a_3^2 \mod q$. We conditionally derive an asymptotic formula, with an error term that has a power savings in~$q$, for the logarithmic density of the set of real numbers~$x$ for which $\pi(x;q,a_1) > \pi(x;q,a_2) > \pi(x;q,a_3)$. The relationship among the~$a_i$ allows us to normalize the error terms for the $\pi(x;q,a_i)$ in an atypical way that creates mutual independence among their distributions, and also allows for a proof technique that uses only elementary tools from probability.
\end{abstract}

\section{Introduction}
A major topic in comparative prime number theory is the study of the prime number races among distinct reduced residues $a_1,\dots,a_r\mod q$. More precisely, one studies the set of positive real numbers~$x$ for which the inequalities $\pi(x;q,a_1) > \cdots > \pi(x;q,a_r)$ hold, where as usual $\pi(x;q,a)$ denotes the number of primes up to~$x$ that are congruent to~$a$ modulo~$q$. It was shown by Rubinstein and Sarnak~\cite{RS} that the logarithmic density of this set,
\begin{equation} \label{delta by pi}
\delta_{q;a_1,\cdots,a_r} = \lim_{x\to\infty} \frac1{\log x} \int\limits_{\substack{2\le t\le x \\ \pi(t;q,a_1) > \cdots > \pi(t;q,a_r)}} \frac {dt} t,
\end{equation}
exists and is strictly between~$0$ and~$1$, under two assumptions:
\begin{itemize}
\item GRH: the generalized Riemann hypothesis, asserting that all nontrivial zeros $\rho=\beta+i\gamma$ of Dirichlet $L$-functions $L(s,\chi)$ satisfy $\beta=\frac12$;
\item LI: a linear independence hypothesis, asserting that the multiset $\{ \gamma\ge0\colon$there exists $\chi\mod q$ such that $L(\frac12+i\gamma,\chi)=0\}$ is linearly independent over the rational numbers.
\end{itemize}
For a fixed number of contestants~$r$, the logarithmic densities $\delta_{q;a_1,\cdots,a_r}$ approach $\frac1{r!}$ uniformly as $q\to\infty$. Several authors (the articles~\cite{FiM,HL,lamother,lam} are most closely related to the present work) have given asymptotic formulas for the difference for various numbers of contestants (including results when the number of contestants can grow with~$q$), and others have provided generalizations to number fields, function fields, and elliptic curves, and to the counting functions of integers with a fixed number of prime factors in arithmetic progressions.

In this paper, we investigate a special class of three-way prime number races, where the residues involved satisfy the congruence
\begin{equation} \label{stw}
a_1^2 \equiv a_2^2 \equiv a_3^2 \mod q.
\end{equation}
We use elementary ideas from probability, and an approach involving an unusual normalization, to establish an asymptotic formula for the corresponding density $\delta_{q;a_1,a_2,a_3}$ with a very good error term. To state our theorem, we must first define some notation.

\begin{definition} \label{b+ def}
For any Dirichlet character $\chi \mod q$, define
\[
b_+(\chi) = \sum_{\substack {\gamma > 0 \\ L(1/2+i\gamma, \chi) = 0}} \frac 1 {\frac14+\gamma^2}
\quad\text{and}\quad
b(\chi)= \sum_{\substack {\gamma \in \mathbb{R} \\ L(1/2+i\gamma, \chi) = 0}} \frac 1 {\frac14+\gamma^2}.
\]
Notice that $b(\chi) = b_+(\chi)+b_+(\overline\chi)$ by the functional equation for Dirichlet $L$-functions, assuming that $L(\frac12,\chi)\ne 0$ (which is a consequence of LI).
\end{definition}

\begin{definition} \label{Hi def}
Define the following sets of characters\mod q:
\begin{align*}
H_0 &= \{\chi\mod q\colon \chi(a_1) = \chi(a_2) = \chi(a_3) \},\\
H_1 &= \{\chi\mod q\colon \chi(a_2) = \chi(a_3) =- \chi(a_1) \}, \\
H_2 &= \{\chi\mod q\colon \chi(a_1) = \chi(a_3) =- \chi(a_2) \}, \\
H_3 &= \{\chi\mod q\colon \chi(a_1) = \chi(a_2) =- \chi(a_3) \}.
\end{align*}
\end{definition}

\begin{remark} \label{Hi remark}
All these sets have the property that $\chi \in H_i$ if and only if $\overline{\chi} \in H_i$. It is easy to verify that~$H_0$ is a subgroup of the group of Dirichlet characters\mod q and that $H_1$, $H_2$, and $H_3$, if nonempty, are cosets of that subgroup.

Furthermore, under the assumption~\eqref{stw}, we show in Lemma~\ref{3way almost unanimous lemma} below that $H_0$ is an index-$4$ subgroup of the group of characters\mod q and that $H_1$, $H_2$, and $H_3$ are all its cosets, so that every character\mod q is in exactly one of $H_0$, $H_1$, $H_2$, or~$H_3$.
\end{remark}

\begin{definition} \label{V def}
With $b_+(\chi)$ as in Definition~\ref{b+ def}, for $i\in\{0,1,2,3\}$ define
\[
V(q) = 2 \sum_{\substack{\chi\mod q \\ \chi\ne\chi_0}} b_+(\chi), \qquad
V_i =  32 \sum_{\chi \in H_i} b_+(\chi), \qquad\text{and } \eta_i = \frac{V_i}{4V(q)} - 1.
\]
\end{definition}

With this notation in place, we may now state the main theorem of this paper.

\begin{theorem} \label{main theorem}
Assume GRH and LI. If~$a_1$, $a_2$, and~$a_3$ are distinct reduced residues modulo~$q$ satisfying $a_1^2\equiv a_2^2\equiv a_3^2\mod q$, then
\[
\delta_{q;a_1,a_2,a_3} = \frac1{2\pi} \arctan \frac{\sqrt{V_1V_2+V_1V_3+V_2V_3}}{V_2} + O_\ep(q^{-1/2+\ep}).
\]
Moreover, if~$a_1$, $a_2$, and~$a_3$ are all quadratic residues or all quadratic nonresidues\mod q, then the error term can be improved to $O(1/\phi(q)\log q)$.
\end{theorem}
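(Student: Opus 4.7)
The plan is to realize $\delta_{q;a_1,a_2,a_3}$ as a probability inside the Rubinstein--Sarnak limiting distribution and then exploit the structure imposed by~\eqref{stw}. Under GRH and LI, the normalized vector
$\bigl((\log x/\sqrt x)(\phi(q)\pi(x;q,a_i)-\pi(x))\bigr)_{i=1,2,3}$ has a logarithmic limiting distribution equal to the law of a random vector $(X(a_1),X(a_2),X(a_3))$ produced by the explicit formula in probabilistic form,
\[
X(a) = -c_q(a) + \sum_{\chi\ne\chi_0}\bar\chi(a)\sum_{\gamma>0}\frac{2\cos(2\pi U_{\chi,\gamma})}{\sqrt{\tfrac14+\gamma^2}},
\]
where the $U_{\chi,\gamma}$ are jointly uniform on $[0,1)$ and, by LI, mutually independent across distinct positive ordinates $\gamma$ (the contribution of $\bar\chi$ being pinned by the functional equation).

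The structural input from~\eqref{stw} is that every character mod~$q$ lies in exactly one of $H_0,H_1,H_2,H_3$ (Lemma~\ref{3way almost unanimous lemma}). Regrouping the sum over $\chi$ accordingly gives, for each $i\in\{1,2,3\}$,
$X(a_i) = -c_q(a_i) + Y_0 + Y_1 + Y_2 + Y_3 - 2\,Y_i$,
where $Y_j$ collects the contribution of those $\chi\in H_j$. Because the $H_j$ are disjoint, LI makes $Y_0,Y_1,Y_2,Y_3$ mutually independent, and each $Y_j$ is symmetric about zero with variance proportional to $V_j$ (respectively $V(q)$). The race event $\pi(x;q,a_1)>\pi(x;q,a_2)>\pi(x;q,a_3)$ thus reduces to the two inequalities $Y_2-Y_1 > (c_q(a_1)-c_q(a_2))/2$ and $Y_3-Y_2 > (c_q(a_2)-c_q(a_3))/2$: an event involving only three mutually independent random variables. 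When the $a_i$ are all QR or all QNR, the shifts vanish and we are left with the clean event $Y_1<Y_2<Y_3$; in general they are $O(1)$, small compared with the scale of the $Y_j$.

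The main term will then come from Gaussian approximation. Replacing each $Y_j$ by a centered Gaussian $G_j$ of matching variance, $(G_2-G_1,\,G_3-G_2)$ is bivariate normal with correlation $\rho = -V_2/\sqrt{(V_1+V_2)(V_2+V_3)}$, and Sheppard's orthant formula gives $\Pr(G_2>G_1,\,G_3>G_2) = \tfrac14 + \tfrac1{2\pi}\arcsin\rho$. The identities $\tfrac\pi2-\arcsin x = \arccos x$ and $\arccos x = \arctan(\sqrt{1-x^2}/x)$ for $x>0$ rewrite this as $\tfrac{1}{2\pi}\arctan\bigl(\sqrt{V_1V_2+V_1V_3+V_2V_3}/V_2\bigr)$, matching the asserted main term.

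The main obstacle will be quantifying this Gaussian approximation with a power savings in $q$. The characteristic function of $Y_j$ factors as a product of Bessel factors $J_0(2t/\sqrt{\tfrac14+\gamma^2})$ over zeros attached to $\chi\in H_j$, which I plan to compare with the matching Gaussian characteristic function via Taylor expansion of $\log J_0$. Writing $\Pr(Y_1<Y_2<Y_3)$ (or the shifted variant in the general case) as a bivariate Fourier integral and controlling the discrepancy in terms of moments $\sum_{\chi\in H_j} b_+(\chi)^k$, bounded by standard GRH estimates of the shape $\sum_\chi b_+(\chi) = O(\phi(q)\log q)$, should produce the advertised error $O_\ep(q^{-1/2+\ep})$. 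The improvement to $O(1/\phi(q)\log q)$ in the all-QR or all-QNR case reflects the fact that there the $O(1)$ bias shifts $c_q(a_i)-c_q(a_j)$ are exactly zero, so no such correction has to be tracked through the argument of Sheppard's formula.
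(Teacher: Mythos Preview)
Your proposal is correct and follows essentially the same route as the paper: the decomposition $X(a_i) = -c_q(a_i) + Y_0+Y_1+Y_2+Y_3-2Y_i$ is the paper's atypical normalization $E^*$ (Definition~\ref{E* def}) in slightly different packaging, and the Gaussian comparison via Bessel-product characteristic functions together with Sheppard's orthant formula is precisely the content of Sections~\ref{Bessel section}--\ref{finally done section}. One small correction: the bias shifts $c_q(a_i)-c_q(a_j)$ are not $O(1)$ but only $O_\ep(q^\ep)$ (since $c_q(a)$ counts square roots of $a$ modulo~$q$), and this is exactly what forces the general-case error to be $O_\ep(q^{-1/2+\ep})$ rather than $O(q^{-1/2})$.
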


\noindent As it happens, most of this paper is concerned with proving the second assertion (with the additional hypothesis on the quadratic nature of~$a_1$, $a_2$, and~$a_3$), after which we derive the first assertion (with its weaker errror term) from it.

Note that if~$V_1$, $V_2$, and~$V_3$ are all quite close to one another (as we shall show is the case), then the argument of $\arctan$ in Theorem~\ref{main theorem} is approximately $\sqrt3$, so that the main term is approximately $\frac16$ as expected.
Stating the theorem with this main term of a perhaps unforeseen shape allows the error term to remain quite small. However, we can derive a simpler asymptotic formula from this theorem if we are less concerned with the quality of the error term:

\begin{corollary} \label{main cor}
Assume GRH and LI. If $a_1$, $a_2$, and $a_3$ are distinct reduced residues modulo~$q$ satisfying $a_1^2\equiv a_2^2\equiv a_3^2\mod q$, then
\[
\delta_{q;a_1,a_2,a_3} = \frac16 + \frac{\eta_i+\eta_k}{8\pi\sqrt3} - \frac{\eta_j}{4\pi\sqrt3} + O\bigg( \frac{(\log \log q)^2}{(\log q)^2}\bigg).
\]
\end{corollary}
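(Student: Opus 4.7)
The plan is to deduce this corollary from Theorem~\ref{main theorem} by Taylor expansion in the small parameters $\eta_1,\eta_2,\eta_3$. Using $V_i = 4V(q)(1+\eta_i)$ from Definition~\ref{V def}, the common factor $V(q)$ cancels between numerator and denominator inside the arctangent. Setting $E := \max_{1\le i\le 3}|\eta_i|$, I would expand
\[
(1+\eta_1)(1+\eta_2)+(1+\eta_1)(1+\eta_3)+(1+\eta_2)(1+\eta_3) = 3 + 2(\eta_1+\eta_2+\eta_3) + O(E^2)
\]
and $(1+\eta_2)^{-1} = 1-\eta_2+O(E^2)$; multiplying the resulting factors yields
\[
\frac{\sqrt{V_1V_2+V_1V_3+V_2V_3}}{V_2} = \sqrt{3}\Bigl(1 + \tfrac13(\eta_1+\eta_3-2\eta_2) + O(E^2)\Bigr).
\]

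Next, since $\arctan\sqrt 3 = \pi/3$ and $\arctan'(\sqrt 3) = 1/4$, a Taylor expansion of $\arctan$ centered at $\sqrt 3$ gives
\[
\frac{1}{2\pi}\arctan \frac{\sqrt{V_1V_2+V_1V_3+V_2V_3}}{V_2} = \frac{1}{6} + \frac{\eta_1+\eta_3-2\eta_2}{8\pi\sqrt 3} + O(E^2).
\]
Identifying $j = 2$ (the index of the $V_j$ appearing in the denominator in Theorem~\ref{main theorem}) and $\{i,k\}=\{1,3\}$, the linear term rearranges to $\frac{\eta_i+\eta_k}{8\pi\sqrt 3} - \frac{\eta_j}{4\pi\sqrt 3}$, which matches the stated main term. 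The power-saving error $O_\ep(q^{-1/2+\ep})$ from Theorem~\ref{main theorem} is absorbed trivially.

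The only substantial remaining task is to show that the $O(E^2)$ error is in fact $O((\log\log q)^2/(\log q)^2)$, that is, that $E \ll \log\log q/\log q$. This coset-averaging estimate for $b_+(\chi)$ is the main obstacle I expect. I would establish it by proving separately the two estimates
\[
\sum_{\chi\in H_i}b_+(\chi) = \frac{\phi(q)}{4}\cdot\frac{\log q}{2} + O(\phi(q)\log\log q) \quad\text{and}\quad \sum_{\chi\ne\chi_0}b_+(\chi) = (\phi(q)-1)\cdot\frac{\log q}{2} + O(\phi(q)\log\log q),
\]
each main term being forced by the classical pointwise bound $b_+(\chi) = \tfrac12\log(q/2\pi) + O(\log\log q)$ valid on GRH, and each error term coming from summing the fluctuations over $\phi(q)/4$ characters in the coset $H_i$ (respectively $\phi(q)-1$ nontrivial characters). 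Taking the ratio then yields $V_i/(4V(q)) = 1 + O(\log\log q/\log q)$ for each $i\in\{1,2,3\}$, which is the required bound on $E$ and completes the deduction.
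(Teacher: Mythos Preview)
Your approach is exactly the paper's: substitute $V_i = 4V(q)(1+\eta_i)$ into Theorem~\ref{main theorem}, Taylor expand the arctangent around $\sqrt3$, and control the quadratic remainder by the bound $\eta_i \ll (\log\log q)/\log q$. The paper compresses your calculus into the single sentence ``it is an easy calculus exercise to compute the linear approximation at the origin,'' and then simply cites Proposition~\ref{V and eta sizes prop} for the $\eta_i$ bound.

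One technical point in your final paragraph deserves correction. The ``classical pointwise bound'' you invoke, $b_+(\chi) = \tfrac12\log(q/2\pi) + O(\log\log q)$, is not available as stated: for complex characters there is no symmetry between zeros with $\gamma>0$ and $\gamma<0$, and the asymmetry $b_+(\chi)-b_+(\overline\chi)$ is governed by $S(T,\chi)=\tfrac1\pi\arg L(\tfrac12+iT,\chi)$, which on GRH is only $O(\log qT/\log\log qT)$, not $O(\log\log q)$. The correct route (and the one the paper takes in Proposition~\ref{V and eta sizes prop}) is to exploit that $H_i$ is closed under conjugation (Remark~\ref{Hi remark}), so that
\[
2\sum_{\chi\in H_i} b_+(\chi) = \sum_{\chi\in H_i}\bigl(b_+(\chi)+b_+(\overline\chi)\bigr) = \sum_{\chi\in H_i} b(\chi),
\]
after which the genuine pointwise bound $b(\chi) = \log q^* + O(\log\log q)$ (Lemma~\ref{bchi lemma}) applies. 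Note also that the conductor $q^*$, not the modulus $q$, appears here; the discrepancy is handled by the separate estimate $\sum_{\chi\bmod q}(\log q - \log q^*) \ll \phi(q)\log\log q$. With these two adjustments your sketch becomes the paper's Proposition~\ref{V and eta sizes prop}, and the deduction of the corollary is complete.
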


\noindent This version of the result recovers a special case of a theorem of Lamzouri~\cite{lam}, with a somewhat simpler proof; see the end of Section~\ref{finally done section} for the details of the comparison.

We have four motivations for presenting Theorem~\ref{main theorem} and its proof. First, the theorem has a better error term than has been recorded in the literature for any prime number race with three or more competitors; indeed, for such races, it is rare to see a savings of a power of~$q$ at all. Second, our proof of Theorem~\ref{main theorem} involves an unusual normalization (see Definition~\ref{E* def} below) of the error terms for the $\pi(x;q,a_i)$, one that allows us to treat the three error terms connected to this race as random variables that are in fact independent, which we hope might inspire similar constructions in other settings. Third, much of the recent progress on prime number races has invoked powerful machinery from probability; we wanted to give an application in this subject where more elementary methods suffice. Finally, we were motivated by generalizing the discussion of the second author from~\cite{Mar}, which essentially treats the two smallest cases $q=8$ and $q=12$ of Theorem~\ref{main theorem} numerically, but with a heuristic analysis that anticipates the methods herein.

That being said, methods from the current literature in comparative prime number theory are capable of treating much more general circumstances, and also, if viewed from a suitable perspective, of providing formulas with error terms nearly as strong as that of Theorem~\ref{main theorem}. See Section~\ref{discussion section} (and also the end of Section~\ref{almost unanimous section}) for further discussion about this wider context.

The rest of the paper is organized as follows. We quote results from the literature in Section~\ref{background section} concerning the limiting logarithmic distributions of error terms for prime counting functions and the random variables that model them. It is in Section~\ref{almost unanimous section} that we define the atypical normalization of these error terms that allows us to treat them independently, and calculate their variances. Using known facts about Bessel functions, we exhibit in Section~\ref{Bessel section} the characteristic function of our random variables and derive some power series representations of them. In Sections~\ref{zeroth deriv section} and~\ref{second deriv section} we establish pointwise bounds between these characteristic functions and the characteristic function of normal variables with the same mean and variance, as well as between the second derivatives of these characteristic functions. This information allows us to compare the density functions and eventually the probabilities themselves of these two types of random variables in Section~\ref{final assembly section}, at which point we prove Theorem~\ref{main theorem} and Corollary~\ref{main cor} in Section~\ref{finally done section}. We conclude with some discussion of the relationship between our results and existing results, and espouse a viewpoint on how such results should be conceived, in Section~\ref{discussion section}.

\section{Background information} \label{background section}

The foundation of the method we use has appeared many times, certainly stimulated in this generation by~\cite{RS}. It will be most convenient for us to quote several definitions and results from work by Fiorilli and the second author~\cite{FiM}, starting with the traditional normalization of the error term for prime counting functions in arithmetic progressions.

\begin{definition} \label{Exqa def}
For any reduced residue $a\mod q$, define
\[
E(x;q,a) = \frac {\phi(q)\pi(x; q, a) - \pi(x)} {\sqrt{x}/\log x},
\]
where $\pi(x)=\pi(x;1,1)$ as usual.
\end{definition}

The following explicit formula for $E(x;q,a)$ is~\cite[Lemma~2.1]{RS}, simplified slightly by the assumption of~GRH.

\begin{lemma} \label{Exqa lemma}
Assume GRH. For any reduced residue $a\mod q$
\[
E(x; q,a) = -c_q(a) + \sum_{\substack{\chi \mod q \\ \chi\ne\chi_0}}  \overline{\chi}(a) E(x,\chi) + o(1)
\]
as $x\to\infty$; here
\begin{equation} \label{cq def}
c_q(a) = -1 + \#\{b \mod q \colon b^2 \equiv a \mod q \}
\end{equation}
and, for any Dirichlet character $\chi$,
\[
E(x, \chi) = \sum_{\substack {\gamma \in \mathbb{R} \\ L(1/2+i\gamma, \chi) = 0}} \frac {x^{i\gamma}} {1/2+i\gamma}
\]
(which converges conditionally when interpreted as the limit of $\sum_{|\gamma|<T}$ as $T$ tends to infinity).
\end{lemma}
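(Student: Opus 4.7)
The plan is to derive the lemma via the classical explicit formula for $\psi(x,\chi)$, together with orthogonality of Dirichlet characters and Abel summation to convert between the Chebyshev functions $\psi$, $\theta$, and $\pi$; this is a standard computation, essentially~\cite[Lemma~2.1]{RS}. By orthogonality, $\phi(q)\psi(x;q,a) = \sum_\chi \overline{\chi}(a)\psi(x,\chi)$ where $\psi(x,\chi) = \sum_{n\leq x}\chi(n)\Lambda(n)$. Since the principal-character term equals $\psi(x)$ up to $O(\log q\log x)$ coming from prime powers dividing~$q$, we obtain $\phi(q)\psi(x;q,a) - \psi(x) = \sum_{\chi\neq\chi_0}\overline{\chi}(a)\psi(x,\chi) + O(\log q\log x)$. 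The truncated explicit formula for non-principal~$\chi$ under GRH reads $\psi(x,\chi) = -\sum_{|\gamma|\leq T} x^{1/2+i\gamma}/(1/2+i\gamma) + O(x\log^2(qxT)/T)$; choosing $T = x$ and dividing by $\sqrt{x}$ gives $\psi(x,\chi)/\sqrt{x} = -E(x,\chi) + o(1)$.

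Next, I would pass from $\psi$ to $\theta$ by isolating prime-power contributions: write $\psi(x;q,a) = \theta(x;q,a) + \sum_{k\geq 2}\sum_b \theta(x^{1/k};q,b)$, the inner sum being over reduced residues~$b$ with $b^k\equiv a\mod q$. Under GRH each $\theta(\sqrt{x};q,b) = \sqrt{x}/\phi(q) + O(x^{1/4+\ep})$, so the $k=2$ term contributes $N(a)\sqrt{x}/\phi(q) + O(\phi(q)x^{1/4+\ep})$, where $N(a) = \#\{b\mod q : b^2 \equiv a\mod q\}$, while the aggregate contribution of $k\geq 3$ is $O(x^{1/3+\ep})$. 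The analogous identity for $\psi(x)$ has $N = 1$, and subtracting gives $\phi(q)\theta(x;q,a) - \theta(x) = \phi(q)\psi(x;q,a) - \psi(x) - (N(a)-1)\sqrt{x} + O(\phi(q)x^{1/3+\ep})$; here $N(a) - 1$ is precisely $c_q(a)$.

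Finally, I would convert $\theta$ to $\pi$ via Abel summation $\pi(x;q,a) = \theta(x;q,a)/\log x + \int_2^x \theta(t;q,a)/(t\log^2 t)\,dt$. After normalizing $\phi(q)\pi(x;q,a) - \pi(x)$ by $\sqrt{x}/\log x$, the leading piece becomes $[\phi(q)\theta(x;q,a) - \theta(x)]/\sqrt{x}$, which by the previous two steps reproduces the claimed formula (with sign dictated by the explicit formula). The integral term contributes $O(1/\log x) = o(1)$ using the GRH-based bound $|\phi(q)\theta(t;q,a) - \theta(t)| \ll \sqrt{t}\log^2(qt)$. The main delicacy throughout is careful bookkeeping: the Abel summation integral, the prime-power contributions from $k \geq 3$, the primes dividing~$q$, and the truncation error from the explicit formula must all be verified to remain $o(1)$ after the final normalization. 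No new analytic input beyond the standard explicit formula for $\psi(x,\chi)$ is required; the work lies entirely in tracking orders of magnitude through the chain $\psi \to \theta \to \pi$.
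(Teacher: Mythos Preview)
The paper does not give its own proof of this lemma; it simply quotes it as \cite[Lemma~2.1]{RS}. Your outline is the standard derivation, and the orthogonality and $\psi\to\theta$ steps are handled correctly. However, there is a genuine gap in the final $\theta\to\pi$ step. You assert that the Abel-summation integral
\[
\frac{\log x}{\sqrt x}\int_2^x \frac{\phi(q)\theta(t;q,a)-\theta(t)}{t\log^2 t}\,dt
\]
is $O(1/\log x)$ using only the pointwise GRH bound $|\phi(q)\theta(t;q,a)-\theta(t)|\ll\sqrt t\,\log^2(qt)$. But that bound makes the integrand of size $\asymp t^{-1/2}$ for large~$t$ (the factor $\log^2(qt)/\log^2 t$ tends to~$1$), so the integral is $\asymp\sqrt x$, and after multiplying by $(\log x)/\sqrt x$ you obtain a contribution of size $\asymp\log x$, not $o(1)$. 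Your estimate would be correct if the GRH bound were $O(\sqrt t)$ with no logarithmic factors, but those factors are genuinely present, and the crude bound discards the oscillation of $\theta(t,\chi)$ that is needed to beat them.

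The standard repair is to feed the explicit formula back into the integral rather than bounding it: write the integrand as a sum over zeros and integrate $\int_2^x t^{i\gamma-1/2}(\log t)^{-2}\,dt$ by parts once, which produces a term of size $O\big(\sqrt x\big/((\tfrac14+\gamma^2)\log^2 x)\big)$; summing over~$\gamma$ and invoking the convergence of $b(\chi)=\sum_\gamma(\tfrac14+\gamma^2)^{-1}$ then yields the required $o(1)$. Equivalently, one may bypass the $\psi\to\theta\to\pi$ chain and apply the explicit formula directly to $\Pi(x,\chi)$ in terms of $\mathrm{Li}(x^\rho)$, using $\mathrm{Li}(x^\rho)=x^\rho/(\rho\log x)+O\big(\sqrt x/(|\rho|^2\log^2 x)\big)$, which gives the $o(1)$ error immediately for the same reason.
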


It is convenient to be able to interpret the distribution of values of $E(x;q,a)$ in terms of certain random variables.

\begin{definition} \label{Z and X def}
For any Dirichlet character $\chi \mod q$, define the random variable
\[
Z_\chi = \sum_{\substack{  \gamma > 0 \\ L(1/2 + i\gamma, \chi) = 0}} \frac {Z_\gamma} {\sqrt{\frac14+\gamma^2}}
\]
where the $Z_\gamma$ are independently uniformly distributed on the unit circle in $\mathbb{C}$. We also use the notation $X_\gamma = \Re Z_\gamma$ and $X_\chi=\Re Z_\chi$, so that the $X_\gamma$ also form an independent collection of random variables, as do the $X_\chi$ assuming that the $L(s,\chi)$ have no zeros in common (which is a consequence of~LI).
\end{definition}

It is known that vector-valued relatives of $E(x;q,a)$ have limiting logarithmic distributions that can be expressed in terms of these random variables; the following proposition is~\cite[Proposition~2.3]{FiM}.

\begin{proposition} \label{general E to Z prop}
Assume LI. Let $\{c_\chi \colon \chi \mod q\}$ be a collection of $\C^r$-vectors, indexed by the Dirichlet characters\mod q, satisfying $c_{\overline{\chi}} = \overline{c_\chi}$. The limiting logarithmic distribution of any $\R^r$-valued function of the form
\[
\sum_{\chi \mod q} c_\chi E(x, \chi) + o(1)
\]
is the same as the distribution of the random variable 
\[
2\Re \sum_{\chi \mod q} c_\chi Z_\chi.
\]
\end{proposition}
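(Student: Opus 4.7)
The plan is to apply the Kronecker--Weyl equidistribution theorem to the explicit formula of Lemma~\ref{Exqa lemma}, after a suitable truncation of the zero sum. By that lemma, up to an $o(1)$ error that does not affect any limiting logarithmic distribution, the given function equals $\sum_\chi c_\chi \sum_\gamma \frac{x^{i\gamma}}{1/2+i\gamma}$, where $\gamma$ ranges over all real ordinates of zeros of $L(s,\chi)$. Pairing each negative ordinate $\gamma$ of $L(s,\chi)$ with the positive ordinate $-\gamma$ of $L(s,\overline\chi)$ (using $L(\tfrac12+i\gamma,\chi)=\overline{L(\tfrac12-i\gamma,\overline\chi)}$), and exploiting the symmetry $c_{\overline\chi}=\overline{c_\chi}$, I would first rewrite the expression as
\[
2\Re\sum_{\chi\mod q}\,\sum_{\substack{\gamma>0 \\ L(1/2+i\gamma,\chi)=0}} \frac{c_\chi x^{i\gamma}}{1/2+i\gamma}+o(1),
\]
which is manifestly real and indexed by positive ordinates only.

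Second, I would truncate the inner sum to $0<\gamma<T$ to obtain a finite trigonometric polynomial in $y=\log x$. Under LI, the positive ordinates across all characters\mod q are linearly independent over $\mathbb{Q}$, so the Kronecker--Weyl theorem says that the tuple $\{e^{i\gamma y}\}_{0<\gamma<T}$ becomes equidistributed on the corresponding finite-dimensional torus as $y$ varies logarithmically. Hence the truncated function has limiting logarithmic distribution equal to that of $2\Re\sum_\chi\sum_{0<\gamma<T} \frac{c_\chi Z_\gamma}{1/2+i\gamma}$, where the $Z_\gamma$ are independent uniform random variables on the unit circle. Since $|1/(1/2+i\gamma)|=1/\sqrt{1/4+\gamma^2}$ and fixed rotations preserve the joint distribution of an independent family of uniform-$S^1$ variables, this finite sum matches in distribution the analogous truncation of $2\Re\sum_\chi c_\chi Z_\chi$ from Definition~\ref{Z and X def}. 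The argument extends componentwise to the asserted $\R^r$-valued statement, since the same equidistribution event controls all components simultaneously.

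The final step is to let $T\to\infty$ on both sides. On the random-variable side, $L^2$ convergence is immediate from $\sum_{\gamma>0}(1/4+\gamma^2)^{-1}=b_+(\chi)<\infty$, itself a consequence of the zero-counting estimate $N(T,\chi)\ll T\log q(T+2)$. On the arithmetic side, one needs the logarithmic mean square of the tail $\sum_{|\gamma|\ge T} x^{i\gamma}/(1/2+i\gamma)$ to vanish as $T\to\infty$; this follows from a Parseval-type identity for mean values of almost-periodic functions, yielding $\int_2^X|\mathrm{tail}|^2\,dt/t\sim(\log X)\sum_{|\gamma|\ge T}(1/4+\gamma^2)^{-1}$ via the log-mean orthogonality of distinct $x^{i\gamma}$. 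The main obstacle is precisely this uniform tail control and the commutation of the two limits $X\to\infty$ and $T\to\infty$; this is the analytic heart of the Rubinstein--Sarnak framework and the reason the existence of a limiting logarithmic distribution is a nontrivial fact rather than a formal computation.
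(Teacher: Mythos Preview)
Your sketch is correct and follows the standard Rubinstein--Sarnak argument. Note, however, that the paper does not actually prove this proposition: it is quoted verbatim as \cite[Proposition~2.3]{FiM}, so there is no in-paper proof to compare against. What you have written is essentially the proof one finds in that reference (ultimately going back to \cite{RS}): rewrite the double sum over $\chi$ and $\gamma\in\R$ as $2\Re$ of a sum over positive ordinates using the functional equation and the hypothesis $c_{\overline\chi}=\overline{c_\chi}$; truncate; apply Kronecker--Weyl under LI to identify the limiting logarithmic distribution of the truncated sum with the law of the corresponding finite sum of rotated $Z_\gamma$'s; then pass to the limit $T\to\infty$ using an $L^2$ tail bound on both sides.

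Two small remarks. First, your opening appeal to Lemma~\ref{Exqa lemma} is unnecessary: the proposition is already phrased in terms of $E(x,\chi)$, which \emph{is} the zero-sum by definition, so there is nothing to deduce from that lemma here (and indeed the proposition does not assume GRH, whereas Lemma~\ref{Exqa lemma} does). Second, you correctly flag the commutation of the limits $X\to\infty$ and $T\to\infty$ as the genuine analytic content; in the cited references this is handled exactly as you indicate, via the logarithmic mean-square orthogonality of the $x^{i\gamma}$ together with the convergence of $\sum_\gamma (\tfrac14+\gamma^2)^{-1}$.
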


\section{Special three-way races and error terms with atypical normalizations} \label{almost unanimous section}

In this section we set out some notation that will be used throughout the main part of this paper (from this point through Section~\ref{finally done section}). In particular, the assumptions on~$a_1$, $a_2$, and~$a_3$ in the first definition will be in force in these sections without explicit mention, as our main result is concerned only with these special three-way prime number races.

\begin{definition} \label{a def}
Let~$a_1$, $a_2$, and~$a_3$ denote distinct reduced residues\mod q such that
\[
a_1^2 \equiv a_2^2 \equiv a_3^2\mod q.
\]
We assume, through the middle of Section~\ref{finally done section}, that $a_1$, $a_2$, and $a_3$ are either all quadratic residues or all quadratic non\-residues\mod q. (Later in Section~\ref{finally done section} we will discuss how this assumption can be removed to establish Theorem~\ref{main theorem} in its entirety).

We will often use $i,j,k$ as indices that denote a generic permutation $(i,j,k)$ of $(1,2,3)$. For example, we define
\[
a_0 \equiv a_ia_ja_k^{-1}\mod q,
\]
which is independent of the permutation $(i,j,k)$.
\end{definition}

\begin{remark}
It is easy to show that most integers~$q$ possess three distinct reduced residues~$a_1$, $a_2$, and~$a_3$ such that the congruences $a_1^2\equiv a_2^2\equiv a_3^2\mod q$ are satisfied---indeed, the integers that do not are precisely the integers with primitive roots. It is also straightforward to show that one almost always can choose these reduced residues so that $a_1$, $a_2$, and $a_3$ are all quadratic nonresidues; for example, such a choice is possible whenever~$q$ has at least three distinct odd prime factors.
\end{remark}

In the special situation described in Definition~\ref{a def}, the sets $H_i$ from Definition~\ref{Hi def} have a tidy relationship with one another.

\begin{lemma} \label{3way almost unanimous lemma}
The sets $H_0$, $H_1$, $H_2$, and $H_3$ partition the group of Dirichlet characters\mod q into four subsets each of cardinality $\frac14\phi(q)$.
\end{lemma}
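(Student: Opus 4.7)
The plan is to reduce the question to character theory on a small quotient of $(\Z/q\Z)^*$. First, for any Dirichlet character $\chi\mod q$, the hypothesis $a_1^2 \equiv a_2^2 \equiv a_3^2\mod q$ forces $\chi(a_1)^2 = \chi(a_2)^2 = \chi(a_3)^2$, so that each of $\chi(a_2)/\chi(a_1)$ and $\chi(a_3)/\chi(a_1)$ lies in $\{\pm1\}$. I would match the four resulting sign patterns with the four sets in Definition~\ref{Hi def} and thereby conclude that every character\mod q lies in exactly one of $H_0$, $H_1$, $H_2$, or $H_3$, which already proves the partition part of the lemma.

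For the cardinalities I would pass to the subgroup $G$ of $(\Z/q\Z)^*$ generated by $b_2 = a_2 a_1^{-1}$ and $b_3 = a_3 a_1^{-1}$. Each $b_j$ is an involution by the hypothesis, neither equals~$1$ (since the $a_i$ are distinct), and $b_2 \ne b_3$ (since $a_2 \ne a_3$); thus $G$ is an elementary abelian $2$-group on two independent generators, so $|G| = 4$. Each Dirichlet character $\chi\mod q$ restricts to a character of $G$, and the pair of values $\bigl(\chi(b_2),\chi(b_3)\bigr)\in\{\pm1\}^2$ determines which $H_i$ contains $\chi$. Hence the four sets $H_0,\dots,H_3$ correspond bijectively to the four characters of~$G$.

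Now the restriction map from the character group of $(\Z/q\Z)^*$ to the character group of~$G$ is a surjective homomorphism, since any character of a subgroup of a finite abelian group extends to the whole group. Its kernel is exactly $H_0$ (the characters trivial on $b_2$ and $b_3$), and its four fibers are exactly $H_0$, $H_1$, $H_2$, $H_3$. All fibers of a surjective homomorphism between finite groups have the same size, namely $|\widehat{(\Z/q\Z)^*}|/|\widehat G| = \phi(q)/4$, which gives the desired common cardinality and, in particular, shows that none of $H_1$, $H_2$, $H_3$ is empty.

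The only step requiring any real attention is verifying that $|G|=4$ rather than $1$ or $2$; this is exactly where one uses that the $a_i$ are pairwise distinct (not merely that each $b_j$ squares to~$1$). Everything afterwards is routine character theory for finite abelian groups, and no further hypothesis on the quadratic nature of the $a_i$ is needed at this stage.
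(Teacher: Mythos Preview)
Your proof is correct and follows essentially the same line as the paper's: both first observe that $\chi(a_1),\chi(a_2),\chi(a_3)$ can differ only by signs (yielding the partition), and then invoke group structure for the equal cardinalities. The paper phrases the second step on the character side, using that $H_1,H_2,H_3$ are cosets of the subgroup $H_0$ (Remark~\ref{Hi remark}), while you work on the dual side via the subgroup $G=\langle a_2a_1^{-1},a_3a_1^{-1}\rangle$ of $(\Z/q\Z)^*$ and the restriction map to $\widehat G$; these are the same argument in dual language, with your version having the small advantage that surjectivity of restriction makes the non-emptiness of each $H_i$ explicit.
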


\begin{proof}
The assumption $a_1^2 \equiv a_2^2 \equiv a_3^2 \mod q$ implies that $\chi(a_1^2) = \chi(a_2^2) = \chi(a_3^2)$, or equivalently $\chi^2(a_1) = \chi^2(a_2) = \chi^2(a_3)$, for every Dirichlet character $\chi\mod q$. Therefore if $a_1^2 \equiv a_2^2 \equiv a_3^2 \mod q$, then for every $\chi\mod q$ each of $\chi(a_1)$, $\chi(a_2)$, and $\chi(a_3)$ must be a square root of the common value $\chi^2(a_i)$. Since there are only two such square roots, at least two of the character values must be equal, and the third value (if not equal to the other two) is the negative of the others. In particular, the sets $H_0$, $H_1$, $H_2$, and $H_3$ partition the group of characters\mod q. The fact that they have equal cardinalities, which must necessarily be $\frac14\phi(q)$, now follows from the observation made in Remark~\ref{Hi remark} that $H_1$, $H_2$, and $H_3$ are all cosets of the subgroup~$H_0$.
\end{proof}

We are also able to simplify certain combinations of character values in this special situation.

\begin{lemma} \label{chi cancel lemma}
For any permutation $(i,j,k)$ of $(1,2,3)$,
\[
\overline{\chi}(a_i)+\overline{\chi}(a_0)-\overline{\chi}(a_j)-\overline{\chi}(a_k) = \begin{cases}
4 \overline{\chi}(a_i), &\text{if } \chi\in H_i, \\
0, &\text{otherwise},
\end{cases}
\]
where $H_i$ is the set of characters from Definition~\ref{Hi def}.
\end{lemma}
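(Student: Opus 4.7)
The plan is to verify the identity by direct case analysis on which of the four sets $H_0, H_1, H_2, H_3$ contains $\chi$; by Lemma~\ref{3way almost unanimous lemma}, these sets partition the group of characters\mod q, so the four cases are exhaustive. The one preliminary observation needed is that, directly from the definition $a_0\equiv a_ia_ja_k^{-1}\mod q$ in Definition~\ref{a def},
\[
\overline{\chi}(a_0) = \overline{\chi}(a_i)\,\overline{\chi}(a_j)\,\overline{\chi}(a_k)^{-1}.
\]
I will unfold this using the character values prescribed by each $H_\ell$ and then read off the left-hand side of the lemma.

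For $\chi\in H_0$, the three values $\overline\chi(a_1), \overline\chi(a_2), \overline\chi(a_3)$ coincide, so the display above collapses to $\overline\chi(a_0)=\overline\chi(a_i)$, and the four terms on the left-hand side of the lemma cancel pairwise, matching the ``otherwise'' branch. For $\chi\in H_m$ with $m\in\{1,2,3\}$, I set $t=\overline\chi(a_m)$; then by definition the remaining two character values both equal $-t$, and a short substitution into the displayed formula for $\overline\chi(a_0)$ shows $\overline\chi(a_0)=t$ in each of the three sub-cases (the algebra reduces to one of $t(-t)/(-t)$, $(-t)t/(-t)$, or $(-t)(-t)/t$, all equal to $t$). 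The identity then reduces to three one-line sign computations: when $m=i$, the sum becomes $t+t-(-t)-(-t)=4t=4\overline\chi(a_i)$; when $m=j$, it becomes $-t+t-t-(-t)=0$; and when $m=k$, it becomes $-t+t-(-t)-t=0$. This completes the case analysis.

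I do not anticipate any real obstacle: the lemma is a purely algebraic identity on the unit circle whose proof is essentially bookkeeping. The only care required is in tracking signs across the several sub-cases, indexed by which coset contains $\chi$ paired with which slot of the permutation $(i,j,k)$ plays the role of the distinguished index~$m$. Once the closed form $\overline\chi(a_0)=\overline\chi(a_m)$ is extracted in each of the four cosets, the remaining verification is routine arithmetic and requires no further input.
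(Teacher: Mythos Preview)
Your proof is correct and follows essentially the same approach as the paper: both arguments perform a direct case analysis over the cosets $H_0,H_i,H_j,H_k$, first determining $\overline\chi(a_0)$ from the multiplicative definition of $a_0$ and then reading off the signed sum. The only cosmetic difference is that the paper records the value of $\chi(a_0)$ relative to $\chi(a_i)$ (the fixed first index of the permutation), whereas you record it relative to $\overline\chi(a_m)$ (the distinguished index of the coset); the resulting four-line verifications are identical.
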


\begin{proof}
It is immediate from Definitions~\ref{Hi def} and~\ref{a def} that $\chi(a_0) = \chi(a_i)$ if $\chi\in H_0\cup H_i$ and that $\chi(a_0) = -\chi(a_i)$ if $\chi\in H_j\cup H_k$, and then that
\begin{align*}
\overline{\chi}(a_i)+\overline{\chi}(a_0)-\overline{\chi}(a_j)-\overline{\chi}(a_k) &= \begin{cases}
\overline{\chi}(a_i)+\overline{\chi}(a_i)-\overline{\chi}(a_i)-\overline{\chi}(a_i) = 0, &\text{if } \chi\in H_0, \\
\overline{\chi}(a_i)+\overline{\chi}(a_i)+\overline{\chi}(a_i)+\overline{\chi}(a_i) = 4\overline{\chi}(a_i), &\text{if } \chi\in H_i, \\
\overline{\chi}(a_i)-\overline{\chi}(a_i)+\overline{\chi}(a_i)-\overline{\chi}(a_i) = 0, &\text{if } \chi\in H_j, \\
\overline{\chi}(a_i)-\overline{\chi}(a_i)-\overline{\chi}(a_i)+\overline{\chi}(a_i) = 0, &\text{if } \chi\in H_k.
\end{cases}
\end{align*}
\end{proof}

At this point we introduce an unusual normalization, tailored to this special situation, of the error terms $E(x;q,a)$ from Definition~\ref{Exqa def}.

\begin{definition} \label{E* def}
For any permutation $(i,j,k)$ of $(1,2,3)$, define
\[
E^{*}(x; q, a_i) = \alpha + E(x; q, a_i) + E(x; q, a_0) - E(x; q, a_j) - E(x; q, a_k),
\]
where $\alpha=c_q(a_0) - c_q(a_i)$ (which, by assumption, is independent of $i\in\{1,2,3\}$).
Note that
\[
E(x; q, a_i) - \tfrac12E^{*}(x; q, a_i) = \tfrac12\big( E(x; q, a_i) + E(x; q, a_j) + E(x; q, a_k) - E(x; q, a_0) - \alpha \big)
\]
is independent of the permutation $(i,j,k)$, so that the ordering of the $E^{*}$ terms is always the same as the ordering of the $E$ terms. In particular, $E^*(x; q, a_i) > E^*(x; q, a_j) > E^*(x; q, a_k)$ if and only if $\pi(x; q, a_i) > \pi(x; q, a_j) > \pi(x; q, a_k)$, so that equation~\eqref{delta by pi} becomes
\begin{equation} \label{delta by E^*}
\delta_{q;a_1,a_2,a_3} = \lim_{x\to\infty} \frac1{\log x} \int\limits_{\substack{2\le t\le x \\ E^*(t;q,a_1) > E^*(t;q,a_2) > E^*(t;q,a_3)}} \frac {dt} t.
\end{equation}
\end{definition}

We can immediately start to see the benefit of this atypical normalization, in that the explicit formulas for the $E(x; q, a_i)$ involve disjoint sets of Dirichlet characters.

\begin{lemma} \label{E* decomp lemma}
Assume GRH. For $i\in\{1,2,3\}$, we have $
E^{*}(x; q, a_i) = 4 \sum_{\chi \in H_i} \overline{\chi}(a_i)E(x,\chi) + o(1).$
\end{lemma}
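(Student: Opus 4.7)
The proof is a direct substitution. My plan is to apply Lemma~\ref{Exqa lemma} to each of the four $E$-terms that comprise
\[
E^*(x;q,a_i) = \alpha + E(x;q,a_i) + E(x;q,a_0) - E(x;q,a_j) - E(x;q,a_k),
\]
and then simplify using our structural assumptions. Doing so produces the constant $\alpha - c_q(a_i) - c_q(a_0) + c_q(a_j) + c_q(a_k)$, together with a sum over $\chi \ne \chi_0$ of $\big(\overline{\chi}(a_i) + \overline{\chi}(a_0) - \overline{\chi}(a_j) - \overline{\chi}(a_k)\big) E(x,\chi)$, plus an $o(1)$ error.

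The next step is to show that the constant term vanishes. Because the $a_i$ are either all quadratic residues or all quadratic nonresidues\mod q, and $c_q(a) = -1 + \#\{b\colon b^2 \equiv a\}$ depends only on the quadratic nature of $a$ (and the common size of the $2$-torsion in $(\Z/q\Z)^*$), we have $c_q(a_1) = c_q(a_2) = c_q(a_3)$; in particular $c_q(a_j) + c_q(a_k) = 2c_q(a_i)$. Combined with $\alpha = c_q(a_0) - c_q(a_i)$, the constant collapses to $-2c_q(a_i) + c_q(a_j) + c_q(a_k) = 0$, which is exactly the cancellation that the choice of $\alpha$ in Definition~\ref{E* def} was designed to enforce.

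For the remaining character sum, Lemma~\ref{chi cancel lemma} does all the work: the coefficient $\overline{\chi}(a_i) + \overline{\chi}(a_0) - \overline{\chi}(a_j) - \overline{\chi}(a_k)$ equals $4\overline{\chi}(a_i)$ for $\chi \in H_i$ and vanishes otherwise. Since $\chi_0 \in H_0$, its omission from the sum costs nothing, and what remains is precisely $4\sum_{\chi \in H_i} \overline{\chi}(a_i) E(x,\chi) + o(1)$, as claimed. There is no real obstacle here: the statement is essentially bookkeeping, because Definition~\ref{E* def} (via the judicious choice of $\alpha$) together with Lemma~\ref{chi cancel lemma} have been engineered to produce this clean decomposition supported on $H_i$. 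The only point of delicacy is the vanishing of the constant, which is exactly where the quadratic-nature hypothesis enters; in the complementary case deferred to Section~\ref{finally done section}, this constant will no longer vanish and must be tracked separately.
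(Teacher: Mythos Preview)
Your proof is correct and follows essentially the same approach as the paper: expand via Lemma~\ref{Exqa lemma}, observe that the constant term vanishes because $c_q(a_1)=c_q(a_2)=c_q(a_3)$ under the quadratic-nature hypothesis, and then invoke Lemma~\ref{chi cancel lemma} to collapse the character sum. Your explicit remark that $\chi_0\in H_0$ (so its exclusion is harmless) is a small clarification the paper leaves implicit.
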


\begin{proof}
By Lemma~\ref{Exqa lemma},
\begin{align}
E^{*}(x; q, a_i) &= \big( \alpha + E(x; q, a_0) \big) + E(x; q, a_i) + - E(x; q, a_j) - E(x; q, a_k) \notag \\
&= -c_q(a_i)- c_q(a_i) + c_q(a_j)+c_q(a_k) \label{four quantities} \\
&\qquad{}+ \sum_{\substack{\chi \mod q \\ \chi\ne\chi_0}} \big( \overline{\chi}(a_i)+\overline{\chi}(a_0)-\overline{\chi}(a_j)-\overline{\chi}(a_k) \big) E(x,\chi) + o(1). \notag
\end{align}
The assumption that $a_1$, $a_2$, and $a_3$ are either all quadratic residues or all quadratic non\-residues\mod q means that the four quantities on line~\eqref{four quantities} are all equal and thus cancel one another. The lemma now follows from Lemma~\ref{chi cancel lemma}.
\end{proof}

We remark that Lemma~\ref{E* decomp lemma} is the only place in our argument where we use the standing assumption that $a_1$, $a_2$, and $a_3$ are either all quadratic residues or all quadratic non\-residues\mod q. In Section~\ref{finally done section} we show how we can derive the general form of Theorem~\ref{main theorem} from the version that requires this assumption.

At this point we are ready to introduce certain random variables that model, in their distributions, the normalized error terms $E^{*}(x; q, a_i)$.

\begin{definition} \label{Xi def}
For $i\in\{1,2,3\}$, define the random variable 
\[
X_i = 8\sum_{\chi \in H_i} X_\chi,
\]
where $X_\chi$ is as in Definition~\ref{Z and X def}; note that the disjointness of $H_1$, $H_2$, and $H_3$ and the independence of the $X_\chi$ imply that $X_1$, $X_2$, and $X_3$ are mutually independent.
Further, for any permutation $(i,j,k)$ of $(1,2,3)$, define the vector-valued random variable
\[
X_{i,j,k} = (X_i,X_j,X_k).
\]
\end{definition} 

\begin{lemma} \label{X variance lemma}
For $i\in\{1,2,3\}$, the variance of~$X_i$ is the quantity~$V_i$ from Definition~\ref{V def}.
\end{lemma}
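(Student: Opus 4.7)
The plan is to unwind the definitions of $X_i$ and $X_\chi$ into an explicit double sum over characters in $H_i$ and over positive critical zeros, and then exploit the mutual independence of the underlying variables $X_\gamma$ to split the variance additively.

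First, the variance of a single $X_\gamma$ is a one-line computation: since $Z_\gamma = e^{i\Theta}$ with $\Theta$ uniform on $[0,2\pi)$, we have $X_\gamma = \cos\Theta$, whence $\E X_\gamma = 0$ and $\E X_\gamma^2 = \tfrac12$. Next, combining Definitions~\ref{Z and X def} and~\ref{Xi def} gives
\[
X_i = 8 \sum_{\chi \in H_i}\ \sum_{\substack{\gamma>0 \\ L(1/2+i\gamma, \chi)=0}} \frac{X_\gamma}{\sqrt{\tfrac14+\gamma^2}}.
\]
Here the hypothesis LI is crucial: it guarantees both that the nontrivial zeros of any single $L(s,\chi)$ are simple and that distinct characters $\chi\mod q$ share no nontrivial zeros. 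Consequently every $X_\gamma$ appearing in the double sum indexes a distinct positive real number $\gamma$, so the family of $X_\gamma$'s appearing is mutually independent.

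Since the variance of a convergent sum of independent centered random variables is the sum of the variances, one concludes that
\[
\mathrm{Var}(X_i) = 64\sum_{\chi\in H_i}\ \sum_{\substack{\gamma>0 \\ L(1/2+i\gamma,\chi)=0}}\frac{1/2}{\tfrac14+\gamma^2} = 32\sum_{\chi\in H_i} b_+(\chi) = V_i,
\]
the last equality being Definition~\ref{V def}. The convergence of the inner series is standard from the classical zero-counting estimate $N(T,\chi) = O(T\log qT)$, which makes $\sum_{\gamma>0}(\tfrac14+\gamma^2)^{-1}$ absolutely convergent. There is no real obstacle here: once the atomic variance $\mathrm{Var}(X_\gamma)=\tfrac12$ is identified and the independence supplied by LI is invoked, the verification is pure bookkeeping, and it is really this calculation that pins down the normalization constant $32$ chosen in Definition~\ref{V def}.
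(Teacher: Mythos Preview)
Your proof is correct and follows essentially the same route as the paper's: unwind the definitions into a double sum over $\chi\in H_i$ and $\gamma>0$, use independence of the $X_\gamma$ to split the variance, and identify the resulting sum with $V_i$. Your explicit remarks on LI (to ensure the $X_\gamma$ in the double sum are genuinely distinct) and on convergence are helpful clarifications, but the argument is otherwise the same as the paper's.
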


\begin{proof}
By Definitions~\ref{Xi def} and~\ref{Z and X def},
\begin{align*}
\sigma^2(X_i) &= \sigma^2 \bigg( 8\sum_{\chi \in H_i} X_\chi \bigg) = 64  \sigma^2 \bigg( \sum_{\chi \in H_i} \sum_{\substack{  \gamma > 0 \\ L(1/2 + i\gamma, \chi) = 0}} \frac {X_\gamma} {\sqrt{\frac14+\gamma^2}} \bigg).
\end{align*}
Since the $X_\gamma$ are independent by assumption,
\begin{align*}
\sigma^2(X_i) &= 64 \sum_{\chi \in H_i} \sum_{\substack{  \gamma > 0 \\ L(1/2 + i\gamma, \chi) = 0}} \frac {\sigma^2(X_\gamma)} {{\frac14+\gamma^2}} 
= 32 \sum_{\chi \in H_i} \sum_{\substack{  \gamma > 0 \\ L(1/2 + i\gamma, \chi) = 0}} \frac {1} {{\frac14+\gamma^2}}
= 32 \sum_{\chi \in H_i} b_+(\chi) 
= V_i
\end{align*}
as claimed.
\end{proof}

Our final proposition of the section records the fact that these random variables truly are substitute objects of study for the normalized prime-counting error terms. Recall the logarithmic density $\delta_{q;a_i,a_j,a_k}$ from Definition~\ref{delta by E^*}.

\begin{proposition} \label{E is X prop}
Assume GRH and LI. For any permutation $(i,j,k)$ of $(1,2,3)$, the limiting logarithmic distribution of the vector-valued function $\big( E^{*}(x; q, a_i), E^{*}(x; q, a_j), E^{*}(x; q, a_k) \big)$ is the same as the distribution of the random variable $X_{i,j,k}$; in particular,
\[
\delta_{q;a_1,a_2,a_3} = \Pr(X_1>X_2>X_3).
\]
\end{proposition}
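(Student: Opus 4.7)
The plan is to combine Lemma~\ref{E* decomp lemma} with Proposition~\ref{general E to Z prop} to identify the limiting logarithmic distribution of the normalized error vector, then use a rotation-invariance argument to match that limit with $X_{i,j,k}$. Applying Lemma~\ref{E* decomp lemma} coordinate-wise gives
\[
\bigl(E^{*}(x;q,a_i),\, E^{*}(x;q,a_j),\, E^{*}(x;q,a_k)\bigr) = \sum_{\chi\mod q} c_\chi E(x,\chi) + o(1),
\]
where $c_\chi = \bigl(4\overline{\chi}(a_i)\mathbf{1}_{\chi\in H_i},\ 4\overline{\chi}(a_j)\mathbf{1}_{\chi\in H_j},\ 4\overline{\chi}(a_k)\mathbf{1}_{\chi\in H_k}\bigr) \in \C^3$. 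Each $H_\ell$ is closed under complex conjugation by Remark~\ref{Hi remark}, and $\overline{\overline{\chi}(a_\ell)} = \chi(a_\ell)$, so the hypothesis $c_{\overline{\chi}} = \overline{c_\chi}$ of Proposition~\ref{general E to Z prop} holds. That proposition identifies the limiting logarithmic distribution of the $E^{*}$-vector as the joint law of $(Y_i, Y_j, Y_k)$, where $Y_\ell := 8\,\Re \sum_{\chi\in H_\ell} \overline{\chi}(a_\ell)\, Z_\chi$.

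The substantive step is to show $(Y_i,Y_j,Y_k) \overset{d}{=} X_{i,j,k}$. Because $H_i, H_j, H_k$ are pairwise disjoint (Lemma~\ref{3way almost unanimous lemma}) and LI guarantees that distinct $L$-functions share no zeros, the three sums $Y_i, Y_j, Y_k$ are built from disjoint (hence independent) collections of the underlying $Z_\gamma$'s, so the three coordinates are mutually independent; the analogous independence of $X_i, X_j, X_k$ is recorded in Definition~\ref{Xi def}. It therefore suffices to verify the one-dimensional identity $Y_\ell \overset{d}{=} X_\ell$. Expanding each $Z_\chi$ as $\sum_{\gamma > 0} Z_\gamma / \sqrt{1/4+\gamma^2}$, the variable $Y_\ell$ is a linear combination of the rotated family $\{\overline{\chi}(a_\ell)\, Z_\gamma\}_{\chi,\gamma}$. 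Since $|\overline{\chi}(a_\ell)| = 1$ and the $Z_\gamma$ are independent and uniform on the unit circle, rotation invariance of the uniform measure on the circle gives $\{\overline{\chi}(a_\ell)\, Z_\gamma\} \overset{d}{=} \{Z_\gamma\}$ as joint distributions. Taking real parts and summing then yields $Y_\ell \overset{d}{=} 8\,\Re\sum_{\chi\in H_\ell} Z_\chi = 8\sum_{\chi\in H_\ell} X_\chi = X_\ell$.

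Finally, the identity $\delta_{q;a_1,a_2,a_3} = \Pr(X_1 > X_2 > X_3)$ follows by applying~\eqref{delta by E^*} to the indicator of the open region $R = \{(u_1,u_2,u_3) : u_1 > u_2 > u_3\}$ via the Portmanteau theorem for limiting logarithmic distributions. The only hypothesis needed is $\Pr(X_{1,2,3} \in \partial R) = 0$, which holds because the independent components $X_1, X_2, X_3$ each have continuous distributions (each $X_\chi$ being a convergent series of independent continuous summands), so their joint law assigns no mass to $\{u_1 = u_2\} \cup \{u_2 = u_3\}$. I expect the main obstacle to be precisely this last Portmanteau-type step: extending the convergence of logarithmic means of bounded continuous test functions to indicators of open continuity regions is standard in comparative prime number theory but requires either a citation or a brief verification via the absolute continuity of the limiting density.
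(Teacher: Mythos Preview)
Your proof is correct and follows essentially the same route as the paper: apply Lemma~\ref{E* decomp lemma} and Proposition~\ref{general E to Z prop}, then use rotation invariance to strip off the $\overline{\chi}(a_\ell)$ factors, and read off the density from equation~\eqref{delta by E^*}. If anything you are more careful than the paper---you apply rotation invariance at the level of the independent $Z_\gamma$ (the paper's phrasing that ``$Z_\chi$ is uniformly distributed on the unit circle'' is a slight slip, since $Z_\chi$ is a sum rather than a single uniform variable), and you supply the Portmanteau/boundary-measure-zero justification that the paper leaves implicit.
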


\begin{proof}
By Lemma~\ref{E* decomp lemma},
\begin{align*}
\big( & E^{*}(x; q, a_i), E^{*}(x; q, a_j), E^{*}(x; q, a_k) \big) \\
&= 4 \bigg(\sum_{\chi \in H_i} (\overline\chi(a_i),0,0)E(x,\chi) + \sum_{\chi \in H_j} (0,\overline\chi(a_j),0)E(x,\chi) + \sum_{\chi \in H_k} (0,0,\overline\chi(a_k))E(x,\chi) \bigg) + o(1),
\end{align*}
whose limiting logarithmic distribution, by Proposition~\ref{general E to Z prop}, is the same as the distribution of
\[
2\Re \bigg( 4 \bigg(\sum_{\chi \in H_i} (\overline\chi(a_i),0,0)Z_\chi + \sum_{\chi \in H_j} (0,\overline\chi(a_j),0)Z_\chi + \sum_{\chi \in H_k} (0,0,\overline\chi(a_k))Z_\chi \bigg) \bigg).
\]
Since $Z_\chi$ is uniformly distributed on the unit circle in~$\C$ and $\overline\chi(a_i)$ is a point on the unit circle, we have simply $\overline\chi(a_i)Z_\chi = Z_\chi$, and similarly with~$i$ replaced by~$j$ or~$k$. Therefore
\begin{align*}
2\Re & \bigg( 4 \bigg(\sum_{\chi \in H_i} (\overline\chi(a_i),0,0)Z_\chi + \sum_{\chi \in H_j} (0,\overline\chi(a_j),0)Z_\chi + \sum_{\chi \in H_k} (0,0,\overline\chi(a_k))Z_\chi \bigg) \bigg) \\
&= 8 \Re \bigg(\sum_{\chi \in H_i} (Z_\chi,0,0) + \sum_{\chi \in H_j} (0,Z_\chi,0) + \sum_{\chi \in H_k} (0,0,Z_\chi) \bigg) \\
&= 8 \Re \bigg(\sum_{\chi \in H_i}Z_\chi, \sum_{\chi \in H_j} Z_\chi,  \sum_{\chi \in H_k} Z_\chi \bigg) = (X_i, X_j, X_k)
\end{align*}
as claimed. The final assertion follows from equation~\eqref{delta by E^*} and the definition of a limiting logarithmic distribution.
\end{proof}

\begin{remark}
From Definition~\ref{Xi def}, we note that we only need to assume GRH and LI for Dirichlet $L$-functions corresponding to the subset $H_1\cup H_2\cup H_3$ of Dirichlet characters\mod q.
\end{remark}

We have shown that the standing assumptions from Definition~\ref{a def} imply that atypical normalizations of the error terms for prime counting functions can be made independent of one another. Our proof used the fact that the sets from Definition~\ref{Hi def} comprised all Dirichlet characters\mod q; said another way, every $\chi\mod q$ takes at most two distinct values on~$a_1$, $a_2$, and~$a_3$. It turns out that a property of this type is more fundamental to our method than the original congruence assumption, an idea which we now take a slight detour to explore.

\begin{definition} \label{AU def}
Given distinct reduced residues $a_1,\dots,a_r\mod q$, we call a Dirichlet character $\chi\mod q$ {\em almost unanimous on $\{a_1,\dots,a_r\}$} if there exists an index $1\le k_\chi\le r$ and a complex number $\omega_\chi$ such that $\chi(a_1) = \cdots = \chi(a_{k_\chi-1}) = \chi(a_{k_\chi+1}) = \cdots = \chi(a_r) = \omega_\chi$.

Note that this definition includes the possibility that $\chi(a_{k_\chi})$ is also equal to $\omega_\chi$ (in which case $k_\chi$ can take any value in $\{1,\dots,r\}$); for example, the principal character~$\chi_0$ is always almost unanimous on any set of reduced residues. Note also that if $\chi$ is almost unanimous on $\{a_1,\dots,a_r\}$, then so is $\overline\chi$, and $k_{\overline\chi} = k_\chi$ and $\omega_{\overline\chi} = \overline{\omega_\chi}$.

Further, we call the set $\{a_1,\dots,a_r\}$ itself {\em almost unanimous} if every Dirichlet character\mod q is almost unanimous on $\{a_1,\dots,a_r\}$.
\end{definition}

If a set $\{a_1,\dots,a_r\}$ of distinct reduced residues is almost unanimous, then one can create an atypical normalization by subtracting the quantity $\sum_{\chi\mod q} \omega_\chi E(x,\chi)$ from each error term $E(x;q,a)$. Note that this quantity is real-valued since $\omega_{\overline\chi} = \overline{\omega_\chi}$, and therefore subtracting it from every $E(x;q,a)$ is order-preserving. The resulting differences have the form
\[
E(x;q,a_j) - \sum_{\chi\mod q} \omega_\chi E(x,\chi) = \sum_{\substack{\chi\mod q \\ k_\chi = j}} (\overline\chi(a) - \omega_\chi) E(x,\chi);
\]
in particular, the sets of characters appearing in the sums for different values of~$j$ are disjoint. As a result, assuming GRH and LI, the random variables modeling this atypically normalized error term will be mutually independent.

An examination of Definition~\ref{E* def} reveals that the quantities $E^*(x;q,a_i)$, up to constant factors, are precisely the result of applying this construction (the supplemental residue~$a_0$, while making the definition concise, is not crucial to the construction). In principle, then, this process of atypically normalizing the error terms for any almost unanimous set of residues would result in independent error terms.

The unfortunate news, however, is that there are no almost unanimous sets of $r\ge3$ residues other than the ones described in Definition~\ref{a def}. (All sets of $1$ or $2$ residue classes are trivially almost unanimous, and there are many ways to normalize two of these error terms to create independent functions---including simply replacing $E(x;q,a)$ and $E(x;q,b)$ with $E(x;q,a)-E(x;q,b)$ and~$0$, which is common practice.) The following two lemmas justify this anticlimactic assertion.

\begin{lemma}
If $r\ge4$, then there does not exist an almost unanimous set $a_1,\dots,a_r$ of distinct reduced residues\mod q.
\end{lemma}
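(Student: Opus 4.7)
The plan is to reduce to the case $r=4$ and then derive a contradiction by analyzing the image of a character-theoretic homomorphism. First, I would show that almost unanimity passes to subsets: if~$\chi$ is almost unanimous on~$\{a_1,\dots,a_r\}$ with dissenter~$a_{k_\chi}$, then restricting~$\chi$ to any subset~$T$ leaves it either constant on~$T$ (when $a_{k_\chi}\notin T$) or almost unanimous on~$T$ with the same dissenter (when $a_{k_\chi}\in T$). Hence every $4$-element subset of an almost unanimous $r$-set is itself almost unanimous, and it suffices to treat $r=4$.

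Set $c_i = a_ia_1^{-1}$ for $i=2,3,4$, so that the~$c_i$ are three distinct nontrivial reduced residues\mod q, and define the homomorphism $\psi(\chi)=(\chi(c_2),\chi(c_3),\chi(c_4))$ from the group of Dirichlet characters\mod q into $(\C^*)^3$. Dividing the almost unanimity condition through by~$\chi(a_1)$, one finds that the image $K$ of~$\psi$ must lie in the set $S = K_1\cup K_2\cup K_3\cup K_4$, where $K_j = \{(x_2,x_3,x_4)\in(\C^*)^3\colon x_i=1\text{ for every }i\in\{2,3,4\}\setminus\{j\}\}$ for $j\in\{2,3,4\}$ and $K_1 = \{(\omega,\omega,\omega)\colon\omega\in\C^*\}$; these four subgroups correspond respectively to the four possible dissenter positions $k_\chi\in\{1,2,3,4\}$.

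The main step is to show that $K\subseteq S$ is impossible given that the~$c_i$ are distinct and nontrivial. The four subgroups~$K_j$ have pairwise trivial intersection in $(\C^*)^3$; setting $L_j := K\cap K_j$, I would show that at most one~$L_j$ is nontrivial as follows: given $\eta\in L_i\setminus\{e\}$ and $\xi\in L_j\setminus\{e\}$ with $i\ne j$, a direct case analysis on the six pairs $\{i,j\}\subset\{1,2,3,4\}$ verifies that the product $\eta\xi\in K$ lies outside~$S$. Once at most one~$L_j$ is nontrivial, $K\subseteq K_j$; the case $j=1$ gives $\chi(c_2)=\chi(c_3)=\chi(c_4)$ for every character~$\chi$, forcing $c_2=c_3=c_4$, while a case $j\in\{2,3,4\}$ forces the two~$c_i$ with $i\ne j$ to be trivial in $(\Z/q\Z)^*$; both outcomes contradict our assumptions. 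The main obstacle I anticipate is organizing this six-pair case check cleanly, especially the three pairings involving the diagonal~$K_1$, where one must rule out accidental cancellations that would place $\eta\xi$ back on a coordinate axis.
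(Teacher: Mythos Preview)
Your argument is correct, and at its core it uses the same multiplicative obstruction as the paper: two characters with distinct dissenter positions multiply to something with two dissenters, hence not almost unanimous. However, the paper executes this idea much more directly. Rather than reducing to $r=4$ and setting up the homomorphism $\psi$ into $(\C^*)^3$ with the four subgroups $K_j$, the paper simply picks $\chi_1$ separating $a_1$ from $a_2$ (so, after relabeling, with dissenter~$a_1$) and $\chi_2$ separating $a_{r-1}$ from $a_r$ (so, after a second relabeling, with dissenter~$a_r$); since $r\ge4$ these two relabelings are independent. Then $\chi_3=\chi_1\chi_2$ satisfies $\chi_3(a_1)\ne\chi_3(a_2)$ and $\chi_3(a_{r-1})\ne\chi_3(a_r)$ simultaneously, which is exactly your ``$\eta\xi\notin S$'' conclusion in the coordinate-axis case $\{i,j\}=\{2,4\}$, reached without any case analysis. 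Your structural setup makes the role of the four dissenter types explicit and shows systematically why no subgroup can meet two of the $K_j$ nontrivially, but the six-pair check (including the diagonal cases you worried about) is real overhead that the paper's two-line argument avoids entirely.
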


\begin{proof}
Suppose, for the sake of contradiction, that $a_1,\dots,a_r$ is almost unanimous.
Since $a_1\not\equiv a_2\mod q$, there exists a character~$\chi_1$ such that $\chi_1(a_1) \ne \chi_1(a_2)$. By assumption, $\chi_1$ is almost unanimous on $\{a_1,\dots,a_r\}$; without loss of generality, $\chi_1(a_2) = \cdots = \chi_1(a_r)$. Similarly, there exists a character~$\chi_2$ such that $\chi_1(a_{r-1}) \ne \chi_1(a_r)$, and without loss of generality, $\chi_2(a_1) = \cdots = \chi_2(a_{r-1})$. (It is in this second ``without loss of generality'' step that we use the assumption $r\ge4$, so that there is no overlap between $\{a_1,a_2\}$ and $\{a_{r-1},a_r\}$.)

Now set $\chi_3=\chi_1\chi_2$. We see immediately that $\chi_3(a_2) = \cdots = \chi_3(a_{r-1})$. However, the known facts $\chi_1(a_1) \ne \chi_1(a_2)$ and $\chi_2(a_1) = \chi_2(a_2)$ imply that $\chi_3(a_1) \ne \chi_3(a_2)$; similarly, $\chi_1(a_{r-1}) = \chi_1(a_r)$ and $\chi_2(a_{r-1}) \ne \chi_2(a_r)$ imply that $\chi_3(a_{r-1}) \ne \chi_3(a_r)$. It follows that $\chi_3$ is not almost unanimous on $\{a_1,\dots,a_r\}$, contrary to assumption.
\end{proof}

\begin{lemma} \label{l5.1}
Let $a_1, a_2, a_3$ be distinct reduced residue classes\mod q. Then $\{a_1,a_2,a_3\}$ is almost unanimous\mod q if and only if $a_1^2 \equiv a_2^2 \equiv a_3^2 \mod q$.
\end{lemma}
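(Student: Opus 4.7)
Plan: For the reverse implication, I would essentially repeat the opening of the proof of Lemma~\ref{3way almost unanimous lemma}: assuming $a_1^2 \equiv a_2^2 \equiv a_3^2 \mod q$, for every character $\chi$ the three values $\chi(a_1), \chi(a_2), \chi(a_3)$ are all square roots of the single quantity $\chi(a_1^2)$, so at least two of them coincide by pigeonhole, making $\chi$ almost unanimous on $\{a_1,a_2,a_3\}$.

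For the forward implication, the plan is to introduce the auxiliary elements $c_1 = a_2 a_3^{-1}$, $c_2 = a_1 a_3^{-1}$, $c_3 = a_1 a_2^{-1}$ in $G = (\Z/q\Z)^*$: these are nontrivial (the $a_i$ being distinct) and satisfy the single relation $c_1 c_3 = c_2$. The subgroups $K_i = \{\chi\mod q : \chi(c_i)=1\}$ are proper in $\hat G$, and almost unanimity of $\{a_1,a_2,a_3\}$ is exactly the covering condition $\hat G = K_1 \cup K_2 \cup K_3$. The relation $c_1 c_3 = c_2$ forces $K_i \cap K_j = K_1 \cap K_2 \cap K_3 =: H_0$ for all $i \neq j$ (if $\chi$ kills any two of the $c_i$ it kills the third), so I would pass to the quotient $\hat D = \hat G / H_0$, which by Pontryagin duality is canonically the character group of $D := \langle c_1, c_2, c_3 \rangle \subseteq G$. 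The images $\bar K_i = K_i / H_0$ are then proper subgroups of $\hat D$, have pairwise trivial intersection, and cover $\hat D$.

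A short counting argument should then pin down $\hat D$. Writing $N = |\hat D|$ and $s_i = |\bar K_i|$, the disjoint decomposition $\hat D \setminus \{1\} = \bigsqcup_i (\bar K_i \setminus \{1\})$ yields $s_1 + s_2 + s_3 = N + 2$, and the trivial pairwise intersections give $s_i s_j = |\bar K_i \bar K_j| \leq N$ for $i \neq j$. Setting $n_i = N/s_i \geq 2$, these constraints become $1/n_1 + 1/n_2 + 1/n_3 = 1 + 2/N$ and $n_i n_j \geq N$. Since the sum is strictly greater than $1$, the smallest $n_i$ must equal $2$; feeding $n_1 = 2$ into $n_1 n_2 \geq N$ and the sum equation forces $n_1 = n_2 = n_3 = 2$ and $N = 4$. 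Hence $\hat D$ is an abelian group of order $4$ admitting three distinct subgroups of order $2$, which identifies $\hat D \cong \Z/2 \times \Z/2$; Pontryagin duality then gives $D \cong \Z/2 \times \Z/2$, so every $c_i$ has order $2$ in $G$, and we conclude $a_j^2 \equiv a_k^2 \mod q$ for each pair. I expect the only real obstacle to be carrying out the counting cleanly; once $\hat D$ is identified as $\Z/2 \times \Z/2$, the rest is routine.
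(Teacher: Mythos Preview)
Your proposal is correct, and its overall architecture matches the paper's: both identify the three proper subgroups $K_i=\{\chi:\chi(a_j)=\chi(a_k)\}$ of the character group, observe that almost unanimity is exactly the covering condition $\hat G=K_1\cup K_2\cup K_3$, deduce that the relevant quotient is the Klein four-group, and read off $c_i^2=1$. The difference lies in how the Klein-four conclusion is reached. The paper simply invokes Scorza's theorem (a group is the union of three proper subgroups if and only if it has a $\Z/2\times\Z/2$ quotient, the three subgroups being the preimages of the three order-$2$ subgroups). You instead exploit the extra relation $c_1c_3=c_2$, which forces all pairwise intersections $K_i\cap K_j$ to collapse to $H_0$, and then run a short self-contained counting argument in the quotient $\hat D=\hat G/H_0$ to pin down $|\hat D|=4$ and hence $\hat D\cong\Z/2\times\Z/2$. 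Your route avoids the external citation and is entirely elementary; the paper's route is terser but relies on a result the reader may need to look up. The one place where your sketch is a bit compressed is the sentence ``feeding $n_1=2$ into $n_1n_2\ge N$ and the sum equation forces $n_1=n_2=n_3=2$ and $N=4$'': when you write it out, you will want to make explicit that $n_1 n_j\ge N$ gives $1/n_j\le 2/N$, whence $1/2+2/N=1/n_2+1/n_3\le 4/N$ forces $N\le4$, and that $N\le 3$ is ruled out since $n_i\mid N$ with $n_i\ge2$ then has no solution with $n_1=2$.
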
 

\begin{proof}
The proof of Lemma~\ref{3way almost unanimous lemma} shows that if $a_1^2 \equiv a_2^2 \equiv a_3^2 \mod q$ then $\{a_1,a_2,a_3\}$ is almost unanimous\mod q.

Conversely, suppose that $\{a_1,a_2,a_3\}$ is almost unanimous\mod q. Define
\begin{align*}
G_1 &= \{ \chi\mod q\colon \chi(a_2)=\chi(a_3)\} \\
G_2 &= \{ \chi\mod q\colon \chi(a_1)=\chi(a_3)\} \\
G_3 &= \{ \chi\mod q\colon \chi(a_1)=\chi(a_2)\},
\end{align*}
and note that $G_1\cap G_2\cap G_3 = H_0$ as in Definition~\ref{Hi def}; moreover, by the definition of almost unanimous, $G = G_1\cup G_2\cup G_3$. It is obvious that each~$G_i$ is a subgroup of the group~$G$ of Dirichlet characters\mod q. Furthermore, for any pair of distinct residues\mod q, there is always a Dirichlet character\mod q that takes different values on the two residues; in particular, the $G_i$ are proper subgroups.

Scorza (see~\cite{Z}) proved that a group~$G$ is the union of three proper subgroups~$G_1$, $G_2$, and~$G_3$ if and only if it has a quotient isomorphic to the Klein $4$-group~$K$ (a result that has been rediscovered more than once---see~\cite{HR} for example), in which case $G_1$, $G_2$, and $G_3$ are the inverse images of the three two-element subgroups of~$K$. In particular, the square of every element of~$G$ is in $G_1\cap G_2\cap G_3 = H_0$. In our situation, we deduce that $\chi^2(a_1) = \chi^2(a_2) = \chi^2(a_3)$ for every $\chi\mod q$, which implies that $\chi(a_1^2) = \chi(a_2^2) = \chi(a_3^2)$ for every $\chi\mod q$; this situation is possible only if $a_1^2 \equiv a_2^2 \equiv a_3^2 \mod q$.
\end{proof}

While there seems to be no direct generalization of our construction, we hope that the ideas described herein might inspire other beneficial atypical normalizations in the future.

\section{Bessel functions and bounds for characteristic functions} \label{Bessel section}

In this section we exhibit exact formulas for the characteristic function of the random variables~$X_i$ introduced in the previous section, as well as various estimates and series representations of those characteristic functions that will be needed in our analysis.

\begin{definition} \label{Bessel def}
Let $J_0(z)$ be the standard Bessel function of order~$0$. Let $\lambda_n$ be the coefficients in the power series expansion
\[
\log J_0(z) = \sum_{n=0}^\infty \lambda_n z^n,
\] 
which is valid for $|z| \le \frac{12}5$ since~$J_0$ has no zeros in this disk: this assertion can be verified computationally for real $z$---see the graph of~$J_0(x)$ in Figure~\ref{Bessel figure}---while Hurwitz~\cite{Hur} proved that $J_0(z)$ has no nonreal zeros (see also~\cite{HS}).
\end{definition}

The following lemma is~\cite[Lemma~2.8]{FiM}:

\begin{lemma} \label{lambda lemma}
With $\lambda_n$ as in Definition~\ref{Bessel def}:
\begin{enumerate}
\item $\lambda_n \ll \big(\frac 5 {12} \big)^n$ uniformly for $n \ge 0$;
\item $\lambda_0 = 0$ and $\lambda_{2m-1} = 0$ for every $m \ge 1$;
\item $\lambda_{2m} < 0$ for every $m \ge 1$.
\end{enumerate}
\end{lemma}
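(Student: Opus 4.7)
The plan is to derive all three properties from two classical facts about $J_0$: its Taylor expansion near the origin and its product representation over its zeros. Since
\[
J_0(z)=\sum_{m=0}^\infty \frac{(-1)^m}{(m!)^2}\left(\frac{z}{2}\right)^{2m}
\]
is an even entire function with $J_0(0)=1$, and since (by the cited work of Hurwitz together with the numerical fact $j_{0,1}=2.4048\ldots>12/5$) $J_0$ does not vanish on the closed disk $|z|\le 12/5$, the function $\log J_0(z)$ is analytic and even on that disk, with $\log J_0(0)=0$. Both statements in part~(b) follow immediately: $\lambda_0=0$ from evaluation at $z=0$, and $\lambda_{2m-1}=0$ from the evenness.

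For part~(c), the strategy is to use the Hadamard factorization
\[
J_0(z) = \prod_{n=1}^\infty \left(1-\frac{z^2}{j_{0,n}^2}\right),
\]
where $j_{0,n}$ runs through the positive zeros of $J_0$ (equivalently all zeros, by Hurwitz). Taking logarithms and expanding $\log(1-w)=-\sum_{k\ge1} w^k/k$ on the disk $|z|\le 12/5$ yields
\[
\log J_0(z) = -\sum_{n=1}^\infty \sum_{k=1}^\infty \frac{1}{k}\,\frac{z^{2k}}{j_{0,n}^{2k}}.
\]
Interchanging the two summations, justified by absolute convergence on $|z|<j_{0,1}$, I read off
\[
\lambda_{2m} = -\frac{1}{m}\sum_{n=1}^\infty \frac{1}{j_{0,n}^{2m}},
\]
which is manifestly negative for every $m\ge 1$.

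For part~(a), I would invoke Cauchy's coefficient inequality. Because $\log J_0$ is analytic on the open disk of radius $j_{0,1}>12/5$, it is continuous, and hence bounded by some constant~$M$, on the closed disk $|z|\le 12/5$. Cauchy's inequality then gives
\[
|\lambda_n|\le M\cdot\left(\tfrac{5}{12}\right)^n
\]
uniformly for $n\ge 0$, which is precisely the bound claimed.

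The only mildly delicate point is confirming that $J_0$ really is zero-free on $|z|\le 12/5$: on the real axis this reduces to the numerical inequality $j_{0,1}>12/5$, and off the real axis it follows from Hurwitz's theorem that all zeros of $J_0$ are real. Both of these facts are already cited in Definition~\ref{Bessel def}, so no genuine obstacle remains; the remaining manipulations are standard power series and infinite-product arguments.
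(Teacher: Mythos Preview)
Your argument is correct. Note, however, that the paper does not actually supply its own proof of this lemma: it simply quotes the statement as \cite[Lemma~2.8]{FiM} and moves on. So there is no ``paper's proof'' to compare against beyond that citation.

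Your self-contained derivation is entirely standard and sound. Part~(b) follows immediately from $J_0(0)=1$ and the evenness of~$J_0$, as you say. For part~(c), the Hadamard product $J_0(z)=\prod_{n\ge1}(1-z^2/j_{0,n}^2)$ is classical, and expanding each logarithm as a geometric-type series on $|z|<j_{0,1}$ gives the explicit formula $\lambda_{2m}=-\tfrac1m\sum_{n\ge1} j_{0,n}^{-2m}<0$; the interchange of sums is justified by absolute convergence since $\sum_n j_{0,n}^{-2}<\infty$. For part~(a), Cauchy's coefficient estimate on the closed disk $|z|\le 12/5$ is exactly the right tool, and the hypothesis that~$J_0$ is zero-free there (real zeros by Hurwitz, smallest positive zero $j_{0,1}\approx 2.4048>12/5$) is already built into Definition~\ref{Bessel def}. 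Nothing is missing.
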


It would be advantageous if this Bessel function were decreasing for $x\ge 0$, say, so that we could bound the tail of the function $|J_0(x)|$ simply by $|J_0(\kappa)|$ for any fixed $0 \le \kappa \le x$. Inconveniently, $J_0(x)$ and its derivatives have oscillations in sign; it is the case, however, that their values are contained in a gradually decaying envelope. Consequently, their values near $x=0$ are indeed their largest, an observation we codify in the following lemma.

\begin{definition} \label{K def}
Define $K(x) = -J'_0(x)/x$, with the value $K(0) = \frac12$ chosen for continuity. Also, define $D(x) = -J'_0(x)/J_0(x) = xK(x)/J_0(x)$.
\end{definition}

\begin{figure}[hbt]
\includegraphics[height=6cm]{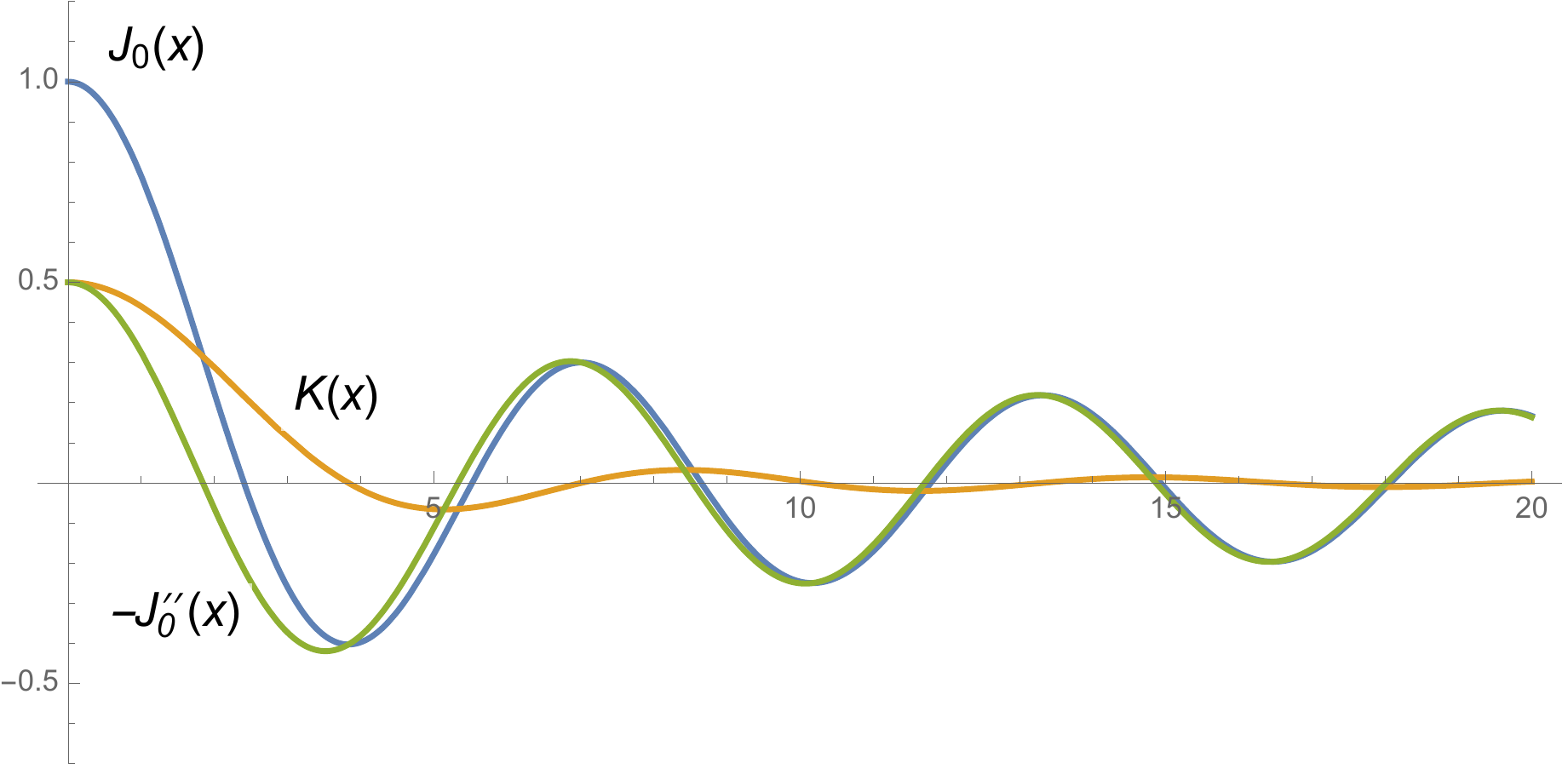}
\caption{The Bessel function and its relatives}
\label{Bessel figure}
\end{figure}

\begin{lemma} \label{Bessel sorta decreasing lemma}
If $0 \le \kappa \le \frac12$, then $J_0(\kappa) $, $K(\kappa)$, $-J''(\kappa)$, and $D(\kappa)$ are all positive; and for all real numbers $x$ with $|x| \ge \kappa$ we have $J_0(\kappa) \ge |J_0(x)|$ and $K(\kappa) \ge |K(x)|$ and $|J_0''(\kappa)| \ge |J_0''(x)|$. In particular,
\begin{equation} \label{Bessel derivative bounds}
|K(x)| \le \frac12 \quad\text{and}\quad |J_0''(x)| \le \frac12 \quad\text{for all } x\in\R.
\end{equation}
\end{lemma}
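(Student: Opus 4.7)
My plan is to split the lemma into the positivity statements on $[0,1/2]$ and the three envelope inequalities for $|x|\ge\kappa$, with the final display \eqref{Bessel derivative bounds} arising as the $\kappa=0$ case of the envelope bounds on $K$ and $J_0''$.

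For positivity on $[0,1/2]$, I would invoke the Taylor expansions $J_0(x) = 1 - x^2/4 + O(x^4)$, $K(x) = J_1(x)/x = 1/2 - x^2/16 + O(x^4)$, and $J_0''(x) = \tfrac12(J_2(x)-J_0(x)) = -1/2 + 3x^2/16 + O(x^4)$, which show directly that $J_0$, $K$, and $-J_0''$ remain positive on $[0,1/2]$; conceptually, the first positive zeros of these three functions occur at $j_{0,1}\approx 2.40$, $j_{1,1}\approx 3.83$, and the first critical point $j_{1,1}'\approx 1.84$ of $J_1$ respectively, all well beyond $1/2$. Then $D(\kappa) = \kappa K(\kappa)/J_0(\kappa) > 0$ follows immediately.

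The envelope inequality $J_0(\kappa)\ge|J_0(x)|$ rests on the classical Sonine function $E(x) = J_0(x)^2 + J_0'(x)^2$, which satisfies $E'(x) = -2J_0'(x)^2/x \le 0$ by a one-line computation using Bessel's equation. Since $E(j_{1,n}) = J_0(j_{1,n})^2$ at each zero $j_{1,n}$ of $J_0' = -J_1$, the sequence $|J_0(j_{1,n})|$ is non-increasing, so $|J_0(x)| \le |J_0(j_{1,1})|\approx 0.40$ for $x\ge j_{1,1}$; combined with the strict decrease of $J_0$ on $[0,j_{1,1}]$ and the inequality $|J_0(j_{1,1})|<J_0(1/2)$, this yields $|J_0(x)| \le J_0(\kappa)$ for $|x|\ge\kappa\in[0,1/2]$. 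The universal bounds $|K(x)| \le 1/2$ and $|J_0''(x)| \le 1/2$ of \eqref{Bessel derivative bounds} follow by the triangle inequality from the Poisson representations
\[
K(x) = \frac{1}{\pi}\int_{-1}^{1}\sqrt{1-t^2}\,\cos(xt)\,dt, \qquad J_0''(x) = -\frac{1}{\pi}\int_{-1}^{1}\frac{t^2\cos(xt)}{\sqrt{1-t^2}}\,dt,
\]
and the stronger monotone forms $K(\kappa)\ge|K(x)|$ and $|J_0''(\kappa)|\ge|J_0''(x)|$ come from combining the monotonicity of $K$ on $[0,j_{2,1}]$ (via $K'(x) = -J_2(x)/x$) and of $|J_0''|$ on the interval up to $j_{1,1}'$ with decaying envelopes past these intervals.

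The main obstacle will be making these envelopes sharp past the first secondary extrema of $K$ and $J_0''$: the natural Sonine function for $J_1$ itself lacks a signed derivative. I would handle this via the Liouville substitution $u(x) = x^{1/2}J_1(x)$, whose equation $u'' + (1-3/(4x^2))u = 0$ has an increasing positive coefficient for $x>\sqrt 3/2$; a Sturm--Picone comparison then gives non-increasing local extrema for $x^{1/2}|J_1(x)|$, yielding an envelope $|K(x)| = |J_1(x)|/|x| \le C |x|^{-3/2}$ with a constant comfortably below $K(1/2)$ past $j_{2,1}$, and $|J_0''(x)|$ is controlled by the same method through the identity $J_0'' = \tfrac12(J_2-J_0)$.
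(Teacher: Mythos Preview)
Your plan is correct and substantially more analytic than the paper's own treatment, which is deliberately minimal: the authors declare that ``a rigorous proof is unenlightening, and we omit most of the details,'' then invoke an envelope bound from Watson's treatise to dispose of $x\ge 6$ and relegate the bounded range to computer verification (recording only that the tightest case is $-J_0''(\tfrac12)>0.45$ versus the local minimum $-J_0''(x)>-0.42$ near $x\approx 3.5$). Your route is genuinely different: the Sonine energy $J_0^2+J_0'^2$ with $E'=-2(J_0')^2/x\le 0$ gives the $J_0$ envelope cleanly; the Poisson integrals yield the global bounds $|K|,|J_0''|\le\tfrac12$ in one line each, which is more elegant than anything in the paper; and the Liouville normal form $u''+(1-3/(4x^2))u=0$ with the monotone-coefficient Sturm argument handles the tail of $K$ without appealing to tabulated asymptotics. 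The trade-off is that your approach, while nearly self-contained, is more laborious and still not entirely numerics-free: in particular, bounding $|J_0''|=\tfrac12|J_2-J_0|$ past $j_{1,1}'$ via separate envelopes for $|J_0|$ and $|J_2|$ gives roughly $\tfrac12(0.40+0.49)\approx 0.445$ against the target $|J_0''(\tfrac12)|\approx 0.454$, so you would still need a few sharp numerical estimates to close the gap---not so different in spirit from what the paper concedes is unavoidable.
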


\begin{proof}
These assertions are clear from the graphs of the functions in Figure~\ref{Bessel figure} (since the first three functions are even, we may restrict attention to $x\ge0$); a rigorous proof is unenlightening, and we omit most of the details. Derivatives of Bessel functions are related to Bessel functions of higher order, and in particular
\begin{align*}
K(x) = \frac{J_1(x)}x \quad\text{and}\quad
-J_0''(x) = J_0(x) - \frac{J_1(x)}x = \frac{J_0(x)-J_2(x)}2.
\end{align*}
Serviceable bounds for these functions can be easily derived from~\cite[Section~VII.3, equation~(1)]{Wat} and the prior equations, showing that the lemma is true for $x\ge6$, say. The computations establishing the lemma for the remaining range can be done to any desired accuracy by computer. The smallest value of the three functions for $\kappa\in[0,\frac12]$ is $-J_0''(\frac12) > 0.45$, while the closest any of these functions come to violating the asserted inequality is the local minimum of $-J''_0(x)$ near $x=3.5$, at which $-J_0''(x) > -0.42$.
\end{proof}

The following convergent infinite products of Bessel functions is central in the subject of prime number races.

\begin{definition} \label{F Phi def}
For any Dirichlet character $\chi$, define
\[
F(z, \chi) = \prod_{\substack{  \gamma > 0 \\ L(\frac 1 2 + i\gamma, \chi) = 0}} J_0 \bigg( \frac {2z} {\sqrt{\frac14+\gamma^2}} \bigg).
\]
Then define, for any permutation $(i,j,k)$ of $(1,2,3)$,
\[
\Phi_i(z) = \prod_{\chi \mod q} F\big(|\chi(a_i)+\chi(a_0)-\chi(a_j)-\chi(a_k)|z, \chi \big) = \prod_{\chi \in H_i} F(4z,\chi),
\]
where the last equality holds by Lemma~\ref{chi cancel lemma}.
The products defining $F(z, \chi)$ and $\Phi_i(z)$ converge uniformly on bounded subsets of the complex plane (a fact that will follow from the upper bounds we establish below for these functions).
\end{definition}

We can immediately see the relevance of this function to the characteristic functions $\hat X_i(z)$ of the random variables~$X_i$ from Definition~\ref{Xi def}.

\begin{proposition}
For $i\in\{1,2,3\}$, we have $\hat X_i(z)=\Phi_i(z)$, where $\Phi_i(z)$ is as in Definition~\ref{F Phi def}.
\end{proposition}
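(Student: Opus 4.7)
The plan is a direct computation: unwind the definition of $X_i$, invoke independence to factor the characteristic function into a product over characters and then over zeros, and finally recognize each factor as a value of the Bessel function $J_0$ via the standard integral representation.

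First I would write, using Definition~\ref{Xi def},
\[
\hat X_i(z) = \E\bigl[e^{izX_i}\bigr] = \E\biggl[\exp\biggl(8iz\sum_{\chi\in H_i}X_\chi\biggr)\biggr].
\]
Assuming LI, the $L(s,\chi)$ for $\chi\in H_i$ have no zeros in common, so the collection $\{X_\gamma\}$ appearing in the expansions of the various $X_\chi$ (through Definition~\ref{Z and X def}) is a single collection of independent random variables. This independence lets me factor the expectation as
\[
\hat X_i(z) = \prod_{\chi\in H_i}\prod_{\substack{\gamma>0\\L(1/2+i\gamma,\chi)=0}}\E\biggl[\exp\biggl(\frac{8iz X_\gamma}{\sqrt{\tfrac14+\gamma^2}}\biggr)\biggr].
\]

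Next, since $Z_\gamma$ is uniform on the unit circle, $X_\gamma=\Re Z_\gamma$ has the distribution of $\cos U$ with $U$ uniform on $[0,2\pi)$. The classical integral representation
\[
J_0(t) = \frac1{2\pi}\int_0^{2\pi} e^{it\cos\theta}\,d\theta
\]
then identifies the inner expectation as $J_0\bigl(8z/\sqrt{\tfrac14+\gamma^2}\bigr)$. Substituting this and comparing with Definition~\ref{F Phi def} gives
\[
\hat X_i(z) = \prod_{\chi\in H_i}\prod_{\substack{\gamma>0\\L(1/2+i\gamma,\chi)=0}} J_0\!\biggl(\frac{2\cdot 4z}{\sqrt{\tfrac14+\gamma^2}}\biggr) = \prod_{\chi\in H_i}F(4z,\chi) = \Phi_i(z),
\]
which is the desired identity.

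There is essentially no obstacle here beyond bookkeeping; the only technical point worth flagging is convergence. The interchanges above are legitimate because the infinite product defining $F(z,\chi)$ (and hence $\Phi_i$) converges absolutely and uniformly on bounded subsets of $\C$ (a fact the authors state will be verified shortly, and which follows from $J_0(w) = 1 + O(|w|^2)$ together with $\sum_\gamma (\tfrac14+\gamma^2)^{-1} < \infty$); this same convergence justifies taking the limit under the expectation when the definition of $X_\chi$ is interpreted as a limit of finite truncations over zeros $\gamma<T$.
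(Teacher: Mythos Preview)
Your proof is correct and follows exactly the approach the paper sketches: factor the characteristic function using independence of the $X_\gamma$, then identify each factor as a Bessel value via the integral representation $J_0(t)=\frac1{2\pi}\int_0^{2\pi}e^{it\cos\theta}\,d\theta$. The paper simply cites \cite[Proposition~2.13]{FiM} for these details, whereas you have written them out explicitly (including the convergence remark), so there is no substantive difference.
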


\begin{proof}
An extremely similar computation is carried out in~\cite[Proposition~2.13]{FiM} as well as in other sources; the key observations are that the $X_\gamma$ are independent (so that the characteristic function of the double sum defining $X_i$ is the product of the individual characteristic functions) and that the characteristic function of~$cX_\gamma$ equals~$J_0(cz)$ for any constant~$c\in\R$.
\end{proof}

We proceed as in~\cite[Propositions~2.10--2.12]{FiM}, writing the power series of the logarithms of these infinite products in terms of the following quantities.

\begin{definition} \label{Wi def}
For $i\in\{1,2,3\}$ and any positive integer~$m$, define
\[
W_i(m) = \frac {8^{2m}|\lambda_{2m}|} {V_i}  \sum_{\chi \in H_i} \sum_{\substack{  \gamma > 0 \\ L(\frac 1 2 + i\gamma, \chi) = 0}}  \frac 1 {(\frac14+\gamma^2)^m},
\]
with $\lambda_n$ as in Definition~\ref{Bessel def}.
\end{definition} 

\begin{lemma} \label{W bound lemma}
We have $W_i(1) = \frac12$ and $W_i(m) \ll (\frac{20}3)^{2m}$ for all $m\ge2$.
\end{lemma}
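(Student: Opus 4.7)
The plan is to handle the two assertions separately: the identity $W_i(1) = \tfrac12$ falls out of an exact computation of $\lambda_2$, while the bound for $m \ge 2$ follows by combining the decay estimate in Lemma~\ref{lambda lemma}(a) with a trivial pointwise bound on $1/(\tfrac14+\gamma^2)$. Both steps use the identification $\sum_{\chi \in H_i} b_+(\chi) = V_i/32$ from Definition~\ref{V def} to cancel the normalizing factor $1/V_i$ in Definition~\ref{Wi def}.

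For $m=1$, I would first read off $\lambda_2$ from the power series of $J_0$. Expanding
\[
\log J_0(z) = \log\bigl(1 - \tfrac{z^2}{4} + \tfrac{z^4}{64} - \cdots\bigr) = -\tfrac14 z^2 + O(z^4),
\]
one obtains $\lambda_2 = -\tfrac14$. Substituting this into Definition~\ref{Wi def} and recognizing the inner double sum as $\sum_{\chi \in H_i} b_+(\chi) = V_i/32$ gives $W_i(1) = (64 \cdot \tfrac14)/V_i \cdot V_i/32 = \tfrac12$ on the nose.

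For $m \ge 2$, I would combine $|\lambda_{2m}| \ll (5/12)^{2m}$ from Lemma~\ref{lambda lemma}(a) with the trivial bound $1/(\tfrac14+\gamma^2) \le 4$, valid for every $\gamma\in\R$. Applying the latter bound $m-1$ times inside the inner sum of Definition~\ref{Wi def} reduces it to at most $4^{m-1} b_+(\chi)$; summing over $\chi \in H_i$ and cancelling $V_i$ as before leaves
\[
W_i(m) \ll \frac{8^{2m}(5/12)^{2m} \cdot 4^{m-1}}{32} = \frac{(20/3)^{2m}}{128} \ll \left(\tfrac{20}{3}\right)^{2m},
\]
where the consolidation uses $(8 \cdot 5/12)^2 \cdot 4 = (10/3)^2 \cdot 4 = 400/9 = (20/3)^2$.

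There is no genuine analytical obstacle here; the argument is a routine combination of two already-established estimates. The only point worth flagging is that the constant $5/12$ in Lemma~\ref{lambda lemma}(a) (which reflects the location of the first real zero of $J_0$) is just small enough that after multiplication by $8^2 \cdot 4 = 256$ the geometric base $(20/3)^2 = 400/9$ matches the form claimed in the statement; any weakening of Lemma~\ref{lambda lemma}(a) would force a correspondingly weaker bound in the present lemma.
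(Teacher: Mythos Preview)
Your proof is correct and follows essentially the same route as the paper's: both compute $\lambda_2=-\tfrac14$ explicitly, invoke $\sum_{\chi\in H_i} b_+(\chi)=V_i/32$ to cancel the normalizing factor, and for $m\ge2$ combine $|\lambda_{2m}|\ll(5/12)^{2m}$ with the trivial bound $1/(\tfrac14+\gamma^2)\le 4$ applied $m-1$ times to arrive at $(20/3)^{2m}/128$. Your arithmetic and the final constant agree exactly with the paper's computation.
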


\begin{proof}
We compute directly from Definition~\ref{Bessel def} that $\lambda_2=-\frac14$, and thus
\begin{align*}
W_i(1) &= \frac {8^{2}|\lambda_{2}|} {V_i} \cdot \sum_{\chi \in H_i} \sum_{\substack{  \gamma > 0 \\ L(\frac 1 2 + i\gamma, \chi) = 0}} \frac1{\frac14+\gamma^2} = \frac{16}{V_i} \cdot \frac{V_i}{32} = \frac12
\end{align*}
by Definition~\ref{V def}. On the other hand, since $\frac14+\gamma^2\ge\frac14$, by Lemma~\ref{lambda lemma} we have
\begin{align*}
W_i(m) &= \frac {8^{2m}|\lambda_{2m}|} {V_i}  \sum_{\chi \in H_i} \sum_{\substack{  \gamma > 0 \\ L(\frac 1 2 + i\gamma, \chi) = 0}}  \frac 1 {(\frac14+\gamma^2)^m} \\
&\ll \frac {8^{2m}} {V_i} \bigg( \frac 5 {12} \bigg)^{2m}  \sum_{\chi \in H_i} \sum_{\substack{  \gamma > 0 \\ L(\frac 1 2 + i\gamma, \chi) = 0}}  \frac {4^{m-1}} {\frac14+\gamma^2} \\
&= \frac1{V_i} \bigg( \frac{10}3 \bigg)^{2m} 4^{m-1} \cdot \frac{V_i}{32} = \frac1{128} \bigg(\frac {20} 3 \bigg)^{2m}. \qedhere
\end{align*}
\end{proof}

The next lemma codifies the standard fact that power series can be estimated by their first terms in compact subsets inside their open disks of convergence.

\begin{lemma} \label{general series lemma}
Let $i\in\{1,2,3\}$. For any integers $k\ge0$ and $d\ge-2k$ and any polynomial $P(x)$,
\begin{align*}
\sum_{m=k}^\infty P(m) W_i(m)z^{2m+d} &\ll_{P,k} |z|^{2k+d} \quad\text{uniformly for $|z| \le \tfrac1{10}$}, \\
\exp\bigg( {-}V_i \sum_{m=k}^\infty P(m) W_i(m)z^{2m+d} \bigg) &= 1+O_{P,k}(V_i |z|^{2k+d}) \quad\text{uniformly for $|z| \le V_i^{-1/(2k+d)}$}.
\end{align*}
\end{lemma}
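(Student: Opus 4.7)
The plan is to reduce both assertions to routine geometric-series estimates using Lemma~\ref{W bound lemma}, which supplies the key bound $W_i(m) \ll (20/3)^{2m}$. The point is that on the disk $|z| \le 1/10$ we have $(20/3)^2 |z|^2 \le 4/9 < 1$, so every tail of $\sum W_i(m) z^{2m}$ is dominated by its leading term up to a constant.

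For part~(a), I would pull $z^{2k+d}$ out of the sum and reindex by $n = m-k$, reducing the claim to the boundedness of
\[
\sum_{n=0}^\infty P(n+k)\, W_i(n+k)\, z^{2n}
\]
uniformly in $|z| \le 1/10$. For indices with $n+k \ge 2$, Lemma~\ref{W bound lemma} gives $W_i(n+k) |z|^{2n} \ll_k (20/3)^{2(n+k)} |z|^{2n} \ll_k (4/9)^n$, and combining this with the crude polynomial bound $|P(n+k)| \ll_{P,k} (n+1)^{\deg P}$ produces an absolutely convergent series whose total is a constant depending only on $P$ and $k$. The only exceptional index is $n+k = 1$, where $W_i(1) = 1/2$ contributes an explicit $O_{P,k}(1)$; the $n+k = 0$ term (only relevant when $k = 0$) is vacuous under the natural convention $W_i(0) = 0$ coming from $\lambda_0 = 0$ in Lemma~\ref{lambda lemma}(b).

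For part~(b), I would write $u = V_i \sum_{m=k}^\infty P(m) W_i(m) z^{2m+d}$ so that the object under study is $e^{-u}$. Part~(a) immediately gives $|u| \ll_{P,k} V_i |z|^{2k+d}$, and the hypothesis $|z| \le V_i^{-1/(2k+d)}$ then forces $|u| \ll_{P,k} 1$. The elementary inequality $|e^{-u} - 1| \le |u| e^{|u|}$, valid for all complex $u$, finishes the proof:
\[
e^{-u} = 1 + O_{P,k}(|u|) = 1 + O_{P,k}\bigl(V_i |z|^{2k+d}\bigr).
\]

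There is no genuine obstacle; the one bit of care required is to verify that all implicit constants depend on $P$ and $k$ alone and not on $i$ or $q$, which is automatic because the geometric decay estimate $W_i(m) \ll (20/3)^{2m}$ in Lemma~\ref{W bound lemma} is itself uniform in $i$ and $q$. (There is also a mild dimensional caveat in part~(b) when $2k+d = 0$, in which case $V_i^{-1/(2k+d)}$ is undefined; this degenerate case does not arise in the subsequent applications, so I would simply tacitly assume $2k+d > 0$ there.)
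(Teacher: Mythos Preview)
Your proposal is correct and follows essentially the same route as the paper's proof: both reduce part~(a) to a geometric-series estimate via the bound $W_i(m)\ll(20/3)^{2m}$ from Lemma~\ref{W bound lemma}, and both deduce part~(b) from the elementary fact that $e^w=1+O(|w|)$ on bounded sets. The only cosmetic difference is that the paper absorbs the polynomial factor into the exponential in one stroke, writing $P(m)W_i(m)\ll_P 7^m$ (since $(20/3)^2<49$), whereas you keep the polynomial and geometric factors separate and appeal to the convergence of $\sum n^{\deg P}(4/9)^n$; the effect is identical.
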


\begin{proof}
By Lemma~\ref{W bound lemma}, we know that $P(m) W_i(m) \ll |P(m)| (\frac{20}3)^{2m} \ll_P 7^m$. Thus when $|z|\le\frac1{10}$, we obtain
\[
\sum_{m=k}^\infty P(m) W_i(m)z^{2m+d} \ll_P \sum_{m=k}^\infty 7^m \bigg( \frac1{10} \bigg)^{2(m-k)} |z|^{2k+d} = \frac{100}{93} 7^k |z|^{2k+d},
\]
which suffices for the first bound. The second bound follows from the first because $e^w = 1+O_{P,k}(|w|)$ uniformly for $|w| \ll_{P,k} 1$.
\end{proof}

The final result of this section is the connection between the characteristic functions $\Phi_i(z)$ and the quantities $W_i(m)$.

\begin{proposition} \label{Phi exp series prop}
For $|z| < \frac3{20}$ and $i\in\{1,2,3\}$, we have $\Phi_i(z) = \exp \big( {-}V_i\sum_{m=1}^\infty W_i(m)z^{2m} \big)$.
\end{proposition}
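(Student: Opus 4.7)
The plan is to take the logarithm of the infinite product defining $\Phi_i(z)$, substitute the Taylor series of $\log J_0$ from Definition~\ref{Bessel def}, justify an interchange of summation, and recognize the resulting inner sums as $V_i W_i(m)$. By Definition~\ref{F Phi def}, we have
\[
\log \Phi_i(z) = \sum_{\chi \in H_i} \sum_{\substack{\gamma > 0 \\ L(\frac12+i\gamma, \chi) = 0}} \log J_0\bigg( \frac{8z}{\sqrt{\frac14+\gamma^2}} \bigg),
\]
and since $\sqrt{\frac14+\gamma^2} > \frac12$ for every $\gamma>0$, the hypothesis $|z| < \frac{3}{20}$ ensures that the argument of each $J_0$ has modulus strictly less than $\frac{12}{5}$, placing us inside the disk where the expansion $\log J_0(w) = \sum_n \lambda_n w^n$ of Definition~\ref{Bessel def} is valid.

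Next, I would substitute this Taylor series and apply Lemma~\ref{lambda lemma} to discard the zero coefficients $\lambda_0$ and $\lambda_{2m-1}$, obtaining the formal identity
\[
\log \Phi_i(z) = \sum_{\chi \in H_i} \sum_{\gamma > 0} \sum_{m=1}^\infty \lambda_{2m} \frac{(8z)^{2m}}{(\frac14+\gamma^2)^m}.
\]
To justify the interchange of summation, I would bound the triple series in absolute value. Using $|\lambda_{2m}| \ll (\frac{5}{12})^{2m}$ from Lemma~\ref{lambda lemma}, the innermost sum in~$m$ is dominated by a geometric series with ratio $\frac{100}{9}|z|^2/(\frac14+\gamma^2)$, which is strictly less than~$1$ when $|z|<\frac{3}{20}$ (again using $\sqrt{\frac14+\gamma^2}>\frac12$), and whose total is $O(|z|^2/(\frac14+\gamma^2))$. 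Summing over $\gamma>0$ produces the finite quantity $b_+(\chi)$ from Definition~\ref{b+ def}, and $H_i$ is a finite set; thus the triple sum converges absolutely and Fubini's theorem applies.

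After rearrangement, writing $\lambda_{2m} = -|\lambda_{2m}|$ (using the sign statement in Lemma~\ref{lambda lemma}) and pulling the $m$-dependent factors out front yields
\[
\log \Phi_i(z) = -\sum_{m=1}^\infty |\lambda_{2m}| \cdot 8^{2m} z^{2m} \sum_{\chi \in H_i} \sum_{\substack{\gamma > 0 \\ L(\frac12+i\gamma,\chi) = 0}} \frac{1}{(\frac14+\gamma^2)^m} = -V_i \sum_{m=1}^\infty W_i(m) z^{2m}
\]
by Definition~\ref{Wi def}. Exponentiating both sides yields the claimed identity. The only genuinely delicate step is the interchange of summation, and the geometric decay of $|\lambda_{2m}|$ together with the known convergence of $b_+(\chi)$ handle it cleanly; everything else is bookkeeping.
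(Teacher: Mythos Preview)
Your proof is correct and follows essentially the same route as the paper's own argument: take logarithms of the infinite product, substitute the Taylor series of $\log J_0$ (valid because $|8z/\sqrt{\tfrac14+\gamma^2}| < \tfrac{12}{5}$), rearrange, and identify the result with $-V_i\sum_m W_i(m)z^{2m}$ via Definition~\ref{Wi def}. Your justification of the interchange of summation is slightly more explicit than the paper's (which simply notes that the sum over~$\gamma$ converges absolutely for $m\ge1$), but the substance is identical.
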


\begin{proof}
From Definition~\ref{F Phi def},
\begin{align*}
\log \Phi_i(z) &= \log \bigg( \prod_{\chi \in H_i} F(4z,\chi) \bigg) = \sum_{\chi \in H_i} \sum_{\substack{\gamma>0 \\ L(\frac12+i\gamma,\chi)=0}} \log J_0\bigg( \frac{8z}{\sqrt{\frac14+\gamma^2}} \bigg).
\end{align*}
For $|z| < \frac3{20}$, the argument of $J_0$ is less than $8 \cdot \frac 3 {20} / {\frac 1 2} = \frac {12} 5$, and so the power series expansion of $\log J_0$ converges absolutely by Lemma~\ref{lambda lemma}(a), giving
\[
\log \Phi_i(z) =  \sum_{\chi \in H_i} \sum_{\substack{\gamma>0 \\ L(\frac12+i\gamma,\chi)=0}} \sum_{n=0}^\infty \lambda_n \bigg( \frac{8z}{\sqrt{\frac14+\gamma^2}} \bigg)^n.
\]
By Lemma~\ref{lambda lemma}(b)--(c), the $n=0$ term and the terms with~$n$ odd vanish, and we may change $\lambda_n$ for~$n$ even to $-|\lambda_{2m}|$. Since the sum over~$\gamma$ converges absolutely for $n\ge2$, we may rearrange terms to obtain
\begin{align*}
\log \Phi_i(z) &= - \sum_{m=1}^\infty |\lambda_{2m}| (8z)^{2m} \sum_{\chi \in H_i}  \sum_{\substack{\gamma>0 \\ L(\frac12+i\gamma,\chi)=0}} \frac 1 {(\frac14+\gamma^2)^m} \\
&= - V_i \sum_{m=1}^\infty z^{2m} |\lambda_{2m}| \frac{8^{2m}}{V_i}  \sum_{\chi \in H_i} \sum_{\substack{\gamma>0 \\ L(\frac12+i\gamma,\chi)=0}} \frac 1 {(\frac14+\gamma^2)^m} = - V_i \sum_{m=1}^\infty z^{2m} W_i(m)
\end{align*}
by Definition~\ref{Wi def}.
\end{proof}

\section{Comparison of characteristic functions} \label{zeroth deriv section}

The goal of this section is to obtain pointwise bounds for the difference between the characteristic function $\Phi_i(x)$ and the characteristic function $e^{-V_ix^2/2}$ of a normal random variable with mean~$0$ and variance~$V_i$. We begin by establishing the asymptotic sizes of these variances.

\begin{lemma} \label{bchi lemma}
Assume GRH. If $q \ge 3$, then $b(\chi) = \log q^* + O(\log\log q)$, where $q^*$ is the conductor of~$\chi$.
\end{lemma}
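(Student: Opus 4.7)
The plan is to use the Hadamard factorization and functional equation for the completed Dirichlet $L$-function to express $b(\chi)$ as a sum of two logarithmic derivatives at $s=1$, and then invoke Littlewood's classical GRH bound on $L'/L(1,\chi)$.

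First I would reduce to primitive characters. If $\chi^{*}\mod{q^{*}}$ is the primitive character inducing $\chi$, then
\[
L(s,\chi)=L(s,\chi^{*})\prod_{\substack{p\mid q\\p\nmid q^{*}}}\bigl(1-\chi^{*}(p)p^{-s}\bigr),
\]
and each removed Euler factor can vanish only on the line $\Re(s)=0$ (where $|p^{-s}|=1$). So the zeros of $L(s,\chi)$ on the critical line are precisely those of $L(s,\chi^{*})$, giving $b(\chi)=b(\chi^{*})$; I may therefore assume $\chi$ is primitive of conductor $q^{*}$.

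Next I would apply the Hadamard factorization of the completed $L$-function $\xi(s,\chi)=(q^{*}/\pi)^{(s+a)/2}\Gamma((s+a)/2)L(s,\chi)$, where $a\in\{0,1\}$ is the parity of $\chi$. Logarithmic differentiation yields
\[
\frac{\xi'}{\xi}(s,\chi)=B(\chi)+\sum_\rho\Bigl(\frac{1}{s-\rho}+\frac{1}{\rho}\Bigr),
\]
the sum ranging over nontrivial zeros of $L(s,\chi)$. Setting $s=1$ and using that under GRH $\frac{1}{\rho}+\frac{1}{1-\rho}=\frac{1}{\rho(1-\rho)}=\frac{1}{1/4+\gamma^{2}}$, I obtain $b(\chi)=\frac{\xi'}{\xi}(1,\chi)-B(\chi)$. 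To dispose of $B(\chi)$, I use the functional equation $\xi(s,\chi)=W(\chi)\xi(1-s,\overline\chi)$: differentiating logarithmically and setting $s=0$, and noting that the symmetric pairs $\frac{1}{-\rho}+\frac{1}{\rho}$ cancel in the Hadamard formula so that $\frac{\xi'}{\xi}(0,\chi)=B(\chi)$, we deduce $B(\chi)=-\frac{\xi'}{\xi}(1,\overline\chi)$. Hence
\[
b(\chi)=\frac{\xi'}{\xi}(1,\chi)+\frac{\xi'}{\xi}(1,\overline\chi).
\]

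To conclude, I would expand
\[
\frac{\xi'}{\xi}(1,\chi)=\tfrac{1}{2}\log(q^{*}/\pi)+\tfrac{1}{2}\frac{\Gamma'}{\Gamma}\Bigl(\tfrac{1+a}{2}\Bigr)+\frac{L'}{L}(1,\chi)
\]
and estimate term by term: the first contributes $\tfrac12\log q^{*}+O(1)$, the digamma term is $O(1)$ since $a\in\{0,1\}$, and Littlewood's classical theorem yields $L'/L(1,\chi)\ll\log\log q^{*}$ under GRH. Adding the analogous expansion for $\overline\chi$ gives $b(\chi)=\log q^{*}+O(\log\log q^{*})$, which is absorbed into $O(\log\log q)$ because $q^{*}\le q$ (with the tiny conductor cases handled trivially, since then $b(\chi)=O(1)$). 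The main obstacle, and the only place GRH is essential, is Littlewood's estimate on $L'/L(1,\chi)$; the remaining steps are formal manipulation of the Hadamard product and functional equation.
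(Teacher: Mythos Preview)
Your proof is correct and follows essentially the same route as the paper. The paper simply cites \cite[Lemma~3.5 and the proof of Proposition~3.6]{FiM} for the identity $b(\chi)=\log q^{*}+2\Re\frac{L'}{L}(1,\chi^{*})+O(1)$ and the GRH bound $\frac{L'}{L}(1,\chi)\ll\log\log q$, whereas you derive the former identity from scratch via the Hadamard product and functional equation before invoking the same Littlewood-type estimate; the underlying argument is identical.
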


\begin{proof}
According to~\cite[Lemma~3.5 and the proof of Proposition~3.6]{FiM}, on GRH we have
\begin{align*}
b(\chi) &= \log{q^*}+2\Re \frac {L'(1,\chi^*)} {L(1,\chi^*)}+O(1) \\
&= \log{q^*}+O(\log\log{q^*})+O(1) = \log{q^*}+O(\log\log q)
\end{align*}
for $q^* >1$. Since $b(\chi) = O(1)$ when $q^* = 1$, we conclude that $b(\chi) = \log q^* + O(\log\log q)$ for all characters $\chi\mod q$.
\end{proof}

We can now establish the sizes of the quantities from Definition~\ref{V def}.

\begin{proposition} \label{V and eta sizes prop}
Assume GRH. We have $V(q) = \phi(q) \log q + O(\phi(q)\log\log q)$ and $V_i = 4\phi(q) \log q + O(\phi(q)\log\log q)$. In particular, $\eta_i \ll (\log\log q)/{\log q}$.
\end{proposition}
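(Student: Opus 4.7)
The plan is to reduce both $V(q)$ and $V_i$ to sums of $b(\chi)$, invoke Lemma~\ref{bchi lemma} to replace $b(\chi)$ by $\log q^*$ with error $O(\log\log q)$, evaluate the resulting sum $\sum_{\chi\mod q}\log q^*$ by a divisor manipulation, and then exploit a pigeonhole-style argument to split this total evenly among the four cosets $H_0,H_1,H_2,H_3$.

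First, since $\chi\mapsto\overline\chi$ is a bijection on the non-principal characters\mod q and an involution on each $H_i$ (Remark~\ref{Hi remark}), the functional-equation identity $b(\chi) = b_+(\chi) + b_+(\overline\chi)$ yields
\[
V(q) = \sum_{\chi\ne\chi_0} b(\chi) \qquad\text{and}\qquad V_i = 16\sum_{\chi\in H_i}b(\chi).
\]
Applying Lemma~\ref{bchi lemma} together with $|H_i| = \phi(q)/4$ from Lemma~\ref{3way almost unanimous lemma} then reduces the problem to estimating the sums $\sum_{\chi\ne\chi_0}\log q^*$ and $\sum_{\chi\in H_i}\log q^*$ with additive error $O(\phi(q)\log\log q)$.

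Next I would evaluate $\Sigma := \sum_{\chi\mod q}\log q^*$. Grouping characters by conductor gives $\Sigma = \sum_{d\mid q}\phi^*(d)\log d$, where $\phi^*(d)$ denotes the number of primitive characters\mod d. Inserting M\"obius inversion $\phi^*(d) = \sum_{e\mid d}\mu(d/e)\phi(e)$ and invoking the standard identities $\sum_{f\mid n}\mu(f) = [n=1]$ and $\sum_{f\mid n}\mu(f)\log f = -\Lambda(n)$, the double sum rearranges to
\[
\Sigma = \phi(q)\log q - \sum_{e\mid q}\phi(e)\Lambda(q/e) = \phi(q)\log q - \phi(q)\sum_{p\mid q}\frac{\log p}{p-1},
\]
the last equality following by collecting prime-power contributions. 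A standard Mertens-type bound $\sum_{p\mid q}\log p/(p-1)\ll\log\log q$ (using $\omega(q)\ll\log q/\log\log q$) then yields $\Sigma = \phi(q)\log q + O(\phi(q)\log\log q)$, from which the estimate for $V(q)$ follows at once.

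For $V_i$, let $\Sigma_i := \sum_{\chi\in H_i}\log q^*$. The trivial bound $q^*\le q$ gives $\Sigma_i \le |H_i|\log q = \tfrac14\phi(q)\log q$ for each $i\in\{0,1,2,3\}$, yet $\Sigma_0+\Sigma_1+\Sigma_2+\Sigma_3 = \Sigma = \phi(q)\log q + O(\phi(q)\log\log q)$. Four nonnegative quantities each bounded above by $\tfrac14\phi(q)\log q$ whose sum lies within $O(\phi(q)\log\log q)$ of $\phi(q)\log q$ must individually each equal $\tfrac14\phi(q)\log q + O(\phi(q)\log\log q)$; multiplying by~$16$ gives $V_i = 4\phi(q)\log q + O(\phi(q)\log\log q)$. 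The bound $\eta_i\ll\log\log q/\log q$ then follows by a one-line algebraic manipulation on $V_i/(4V(q))$. The only mildly delicate step is the divisor computation producing the closed form for $\Sigma$; the coset equidistribution is pleasingly automatic, because $\log q$ happens to match the trivial upper bound for $\log q^*$.
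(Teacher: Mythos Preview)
Your proof is correct and follows essentially the same route as the paper's: both reduce to sums of $b(\chi)$ via conjugation symmetry, apply Lemma~\ref{bchi lemma}, and control $\sum_{\chi\in H_i}\log q^*$ using the nonnegativity of $\log q-\log q^*$ together with the evaluation $\sum_{\chi\bmod q}(\log q-\log q^*)=\phi(q)\sum_{p\mid q}\frac{\log p}{p-1}\ll\phi(q)\log\log q$. Your pigeonhole formulation (four quantities each $\le\tfrac14\phi(q)\log q$ summing to $\phi(q)\log q+O(\phi(q)\log\log q)$) is logically equivalent to the paper's direct bound $\sum_{\chi\in H_i}(\log q-\log q^*)\le\sum_{\chi\bmod q}(\log q-\log q^*)$, and your explicit divisor computation of~$\Sigma$ reproduces what the paper cites from~\cite{FiM}.
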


\begin{proof}
It suffices to prove the two asymptotic formulas, as then the estimate for $\eta_i$ follows directly from Definition~\ref{V def}.

If $S$ is any set of characters\mod q such that $\chi\in S$ if and only if $\overline\chi\in S$, then
\[
2\sum_{\chi\in S} b_+(\chi) = \sum_{\chi\in S} b_+(\chi) + \sum_{\overline\chi\in S} b_+(\overline\chi) = \sum_{\chi\in S} b_+(\chi) + \sum_{\chi\in S} b_+(\overline\chi) = \sum_{\chi\in S} b(\chi)
\]
as noted in Definition~\ref{b+ def}. Then, by Lemma~\ref{bchi lemma},
\begin{align}
2\sum_{\chi\in S} b_+(\chi) &= \sum_{\chi\in S} \big( \log q^* + O(\log \log q) \big) \notag \\
&= \sum_{\chi\in S} \log q - \sum_{\chi\in S} ( \log q - \log q^* ) + O(\#S \log \log q) \notag \\
&= \#S \log q + O\bigg( \sum_{\chi\in S} ( \log q - \log q^* ) \bigg) + O(\phi(q) \log \log q). \label{general b+ sum}
\end{align}
However,~\cite[Proposition~3.3 and the proof of Proposition~3.6]{FiM} imply that
\[
\sum_{\chi\mod q} ( \log q - \log q^* ) = \phi(q) \sum_{p\mid q} \frac{\log p}{p-1} \ll \phi(q)\log \log q,
\]
and so (since $\log q - \log q^*$ is nonnegative) the first error term can be absorbed into the second.

In particular, combining Definition~\ref{V def} with equation~\eqref{general b+ sum} yields
\begin{align*}
V(q) &= 2\sum_{\substack{\chi\mod q \\ \chi\ne\chi_0}} b_+(\chi) = (\phi(q)-1) \log q + O(\phi(q) \log \log q) \\
V_i &= 32\sum_{\chi\in H_i} b_+(\chi) = 16 \big( \tfrac14\phi(q) \log q + O(\phi(q) \log \log q) \big),
\end{align*}
since $\#H_i = \frac14\phi(q)$ by Remark~\ref{Hi remark}.
\end{proof}

In our proofs we will need~$q$ to be sufficiently large for some of our inequalities to hold; the following quantity~$q_0$ will be used through the end of Section~\ref{finally done section}. (Our justification that~$q_0$ exists references a result that assumed GRH, but the existence of~$q_0$ could be justified without that hypothesis; correspondingly, we do not include ``assume~GRH'' in the results of this section when the only detail for which GRH is required is the appearance of~$q_0$.)

\begin{definition} \label{q0 def}
We define a positive real number~$q_0$ as follows. By Proposition~\ref{V and eta sizes prop} we know that $V_i \gg \phi(q)\log q$ uniformly for all choices of $a_i,a_j,a_k$ from Definition~\ref{a def}. Therefore, we can choose $q_0>0$ so that $V_i \ge \max\{2^{20},\phi(q)\}$ for all $q>q_0$. We will often use (without comment) the specific consequence that $V_i^{-1/4} \le \frac1{32}$ for $q>q_0$.
\end{definition}

We proceed now to establish several estimates for $\Phi_i(x)$ valid for various ranges of~$x$. The first such formula, for arguments close to~$0$, is similar to~\cite[Proposition 2.12]{FiM}.

\begin{proposition} \label{small range prop}
For $i\in\{1,2,3\}$ and $q>q_0$, we have $\Phi_i(z) = e^{-V_iz^2/2} \big( 1+O( V_i|z|^4 ) \big)$ for all complex numbers~$z$ with $|z|\le V_i^{-1/4}$.
\end{proposition}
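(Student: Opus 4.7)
The plan is to derive this estimate directly from the exact power series representation of $\log \Phi_i$ provided by Proposition~\ref{Phi exp series prop}, using the computed first coefficient $W_i(1) = \tfrac12$ from Lemma~\ref{W bound lemma} to isolate the Gaussian factor, and then applying the standard estimate for the tail of the series given by Lemma~\ref{general series lemma}.

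First I would verify that the range $|z|\le V_i^{-1/4}$ lies inside the disk of convergence: by Definition~\ref{q0 def}, for $q>q_0$ we have $V_i^{-1/4}\le\tfrac1{32}<\tfrac3{20}$, so Proposition~\ref{Phi exp series prop} applies and yields
\[
\Phi_i(z) = \exp\bigg({-}V_i\sum_{m=1}^\infty W_i(m)z^{2m}\bigg).
\]
Then I would peel off the $m=1$ term using $W_i(1)=\tfrac12$ (Lemma~\ref{W bound lemma}), obtaining
\[
\Phi_i(z) = e^{-V_iz^2/2}\exp\bigg({-}V_i\sum_{m=2}^\infty W_i(m)z^{2m}\bigg).
\]

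Next I would apply the second bound of Lemma~\ref{general series lemma} with $k=2$, $d=0$, and $P\equiv 1$. The hypothesis becomes $|z|\le V_i^{-1/(2k+d)} = V_i^{-1/4}$, which is precisely our range (and also $V_i^{-1/4}\le\tfrac1{10}$, so the underlying first bound of that lemma applies too). This gives
\[
\exp\bigg({-}V_i\sum_{m=2}^\infty W_i(m)z^{2m}\bigg) = 1+O(V_i|z|^4),
\]
and multiplying by $e^{-V_iz^2/2}$ yields the claimed formula.

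There is no real obstacle here: the proposition is a routine consequence of the already-established exact series expansion, the identification of the leading coefficient, and the tail-series estimate. The only points requiring any care are checking that $|z|\le V_i^{-1/4}$ is simultaneously within the radius of convergence of the $\log J_0$ expansion (ensured by $q>q_0$) and within the validity range of Lemma~\ref{general series lemma} for $k=2,d=0$ (again ensured by the same inequality $V_i^{-1/4}\le\tfrac1{32}$).
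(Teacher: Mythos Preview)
Your proof is correct and follows essentially the same approach as the paper: apply Proposition~\ref{Phi exp series prop}, split off the $m=1$ term using $W_i(1)=\tfrac12$, and invoke Lemma~\ref{general series lemma} with $k=2$, $d=0$ to handle the tail. You are slightly more explicit than the paper in verifying that $|z|\le V_i^{-1/4}$ falls within the required ranges, but the argument is otherwise identical.
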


\begin{proof}
Since
\[
\Phi_i(z) = \exp\bigg( {-}V_i \sum_{m=1}^\infty W_i(m)z^{2m} \bigg) = e^{-V_i W_i(1)z^2} \exp\bigg( {-}V_i \sum_{m=2}^\infty W_i(m)z^{2m} \bigg)
\]
by Proposition~\ref{Phi exp series prop}, and $W_i(1)=\frac12$ by Lemma~\ref{W bound lemma}, the estimate $\Phi_i(z) = e^{-V_iz^2/2} \exp( O( V_i|z|^4 ))$ (which implies the asserted statement) follows immediately from Lemma~\ref{general series lemma}.
\end{proof}

\begin{lemma} \label{Phi sorta decreasing}
For $i\in\{1,2,3\}$ and $q>q_0$, we have $|\Phi_i(x)| \ll e^{-V_i^{1/2}/2}$ for all real numbers~$x$ with $|x| \ge V_i^{-1/4}$. 
\end{lemma}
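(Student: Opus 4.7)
The plan is to use the infinite product definition of $\Phi_i$, apply Lemma~\ref{Bessel sorta decreasing lemma} factor-by-factor with a cleverly chosen $\kappa$ that depends on the zero $\gamma$, and thereby reduce the inequality for general $|x| \ge V_i^{-1/4}$ to the boundary case $|x| = V_i^{-1/4}$, where Proposition~\ref{small range prop} applies directly and produces exactly $e^{-V_i^{1/2}/2}$ up to a multiplicative constant.

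Concretely, for each $\chi \in H_i$ and each $\gamma > 0$ with $L(\tfrac12+i\gamma,\chi)=0$, I would apply Lemma~\ref{Bessel sorta decreasing lemma} with the choice
\[
\kappa_\gamma = \frac{8 V_i^{-1/4}}{\sqrt{\tfrac14+\gamma^2}}, \qquad y = \frac{8x}{\sqrt{\tfrac14+\gamma^2}}.
\]
Two conditions must be checked. First, $\kappa_\gamma \le \tfrac12$: since $\sqrt{\tfrac14+\gamma^2} \ge \tfrac12$, we have $\kappa_\gamma \le 16 V_i^{-1/4}$, which is at most $\tfrac12$ for $q>q_0$ by Definition~\ref{q0 def}. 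Second, $|y| \ge \kappa_\gamma$: this is simply the hypothesis $|x| \ge V_i^{-1/4}$ after clearing denominators. The lemma then yields $|J_0(y)| \le J_0(\kappa_\gamma)$ factor-by-factor, and multiplying over all $\chi \in H_i$ and all $\gamma>0$ gives
\[
|\Phi_i(x)| \;\le\; \prod_{\chi \in H_i} \prod_{\substack{\gamma>0\\ L(\frac12+i\gamma,\chi)=0}} J_0(\kappa_\gamma) \;=\; \Phi_i(V_i^{-1/4}),
\]
where the right-hand side is a positive real number because every $\kappa_\gamma \le \tfrac12$ lies below the first positive zero of $J_0$.

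To finish, I would invoke Proposition~\ref{small range prop} at the point $z = V_i^{-1/4}$ (which lies inside its range of validity), obtaining
\[
\Phi_i(V_i^{-1/4}) \;=\; e^{-V_i \cdot V_i^{-1/2}/2}\bigl( 1 + O(V_i \cdot V_i^{-1}) \bigr) \;=\; e^{-V_i^{1/2}/2} \cdot O(1),
\]
which is the asserted bound. There is no real obstacle in this argument; the only delicate step is the $\gamma$-dependent choice of $\kappa_\gamma$, which is engineered so that (a) the hypothesis $\kappa_\gamma \le \tfrac12$ of Lemma~\ref{Bessel sorta decreasing lemma} holds uniformly in $\gamma$ and (b) the product of the $J_0(\kappa_\gamma)$ reassembles exactly into $\Phi_i$ evaluated at the critical scale $V_i^{-1/4}$, where the Gaussian-type asymptotic from Proposition~\ref{small range prop} just barely still applies.
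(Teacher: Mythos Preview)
Your proposal is correct and essentially identical to the paper's own proof: both choose $\kappa_\gamma = 8V_i^{-1/4}/\sqrt{\tfrac14+\gamma^2}$, verify $\kappa_\gamma \le \tfrac12$ via $q>q_0$, apply Lemma~\ref{Bessel sorta decreasing lemma} factor-by-factor to get $|\Phi_i(x)| \le \Phi_i(V_i^{-1/4})$, and then invoke Proposition~\ref{small range prop} at $z=V_i^{-1/4}$.
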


\begin{proof}
From Definition~\ref{F Phi def},
\[
\Phi_i(x) = \prod_{\chi \in H_i} \prod_{\substack{\gamma>0 \\ L(\frac12+i\gamma,\chi)=0}} J_0\bigg( \frac{8x}{\sqrt{\frac14+\gamma^2}} \bigg).
\]
In each factor, set $\kappa = 8V_i^{-1/4}/\sqrt{\frac14+\gamma^2}$. We have $0 \le \kappa \le 8V_i^{-1/4}/\frac12 \le 16\cdot\frac1{32} = \frac12$ since $q>q_0$, and therefore Lemma~\ref{Bessel sorta decreasing lemma} applies to each factor, yielding
\[
|\Phi_i(x)| = \prod_{\chi \in H_i} \prod_{\substack{\gamma>0 \\ L(\frac12+i\gamma,\chi)=0}} \bigg| J_0\bigg( \frac{8x}{\sqrt{\frac14+\gamma^2}} \bigg) \bigg| \le \prod_{\chi \in H_i} \prod_{\substack{\gamma>0 \\ L(\frac12+i\gamma,\chi)=0}} J_0\bigg( \frac{8V_i^{-1/4}}{\sqrt{\frac14+\gamma^2}} \bigg) = \Phi_i(V_i^{-1/4})
\]
for $|x| \ge V_i^{-1/4}$. The lemma now follows from the estimate $\Phi_i(V_i^{-1/4}) \ll e^{-V_i^{1/2}/2}$ which is a special case of Proposition~\ref{small range prop}.
\end{proof}

The following proposition could be proved directly (derived from~\cite[Lemma~2.16]{FiM}, for example); however, we will need a more general result later, so it is more efficient to derive this proposition from that later result.

\begin{lemma} \label{Phi bound large x}
For $i\in\{1,2,3\}$, we have $|\Phi_i(x)| < e^{-\phi(q)|x|/8}$ for all real numbers~$x$ with $|x| \ge 50$. 
\end{lemma}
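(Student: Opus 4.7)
The plan is to factorize and apply a per-character estimate. By Definition~\ref{F Phi def}, we have $\Phi_i(x)=\prod_{\chi\in H_i}F(4x,\chi)$, and by Lemma~\ref{3way almost unanimous lemma} the set $H_i$ has cardinality $\frac14\phi(q)$; moreover, for $i\in\{1,2,3\}$, every character in $H_i$ is non-principal (since $\chi_0\in H_0$). Consequently it is enough to establish the single-character bound
\[
|F(4x,\chi)|\le e^{-|x|/2}\qquad\text{for all non-principal }\chi\text{ and all }|x|\ge 50,
\]
since multiplying across the $\tfrac14\phi(q)$ characters in $H_i$ then gives $|\Phi_i(x)|\le e^{-(\phi(q)/4)(|x|/2)}=e^{-\phi(q)|x|/8}$.

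The single-character bound is a standard tail-of-Bessel-product estimate. Write $F(4x,\chi)=\prod_{\gamma>0}J_0(y_\gamma)$ where $y_\gamma=8|x|/\sqrt{\tfrac14+\gamma^2}$, and split the product at $\gamma=8|x|$. For zeros with $\gamma<8|x|$ we simply discard factors using $|J_0|\le 1$ (Lemma~\ref{Bessel sorta decreasing lemma}). For zeros with $\gamma\ge 8|x|$ we have $y_\gamma\le 1$, and a direct comparison of power series yields $J_0(y)\le e^{-y^2/8}$ throughout $[0,1]$. Combining these gives
\[
\log|F(4x,\chi)|\le -\sum_{\gamma\ge 8|x|}\frac{8x^2}{\tfrac14+\gamma^2}.
\]
The Riemann--von Mangoldt zero-counting formula says that the density of zeros of $L(s,\chi)$ near height $T$ is $(1/\pi)\log(q^*T/(2\pi))+O(1)$; after the substitution $u=T/(8|x|)$ and the elementary identity $\int_1^\infty u^{-2}\log(cu)\,du=\log c+1$, one obtains a lower bound of the form
\[
\sum_{\gamma\ge 8|x|}\frac{8x^2}{\tfrac14+\gamma^2}\ge \frac{|x|}{\pi}\big(\log(4q^*|x|/\pi)+1\big)+O(\log q),
\]
which at $|x|=50$ already comfortably exceeds $|x|/2$ and whose gap only widens for larger $|x|$.

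The main technical obstacle is the bookkeeping of explicit constants required to pin down the numerical threshold $|x|\ge 50$ and the exponential rate $\phi(q)/8$ exactly. This is precisely the reason the authors prefer to derive the present lemma as a corollary of the more general estimate appearing later in the paper rather than replicating the constant-chasing of \cite[Lemma~2.16]{FiM} in this special case; once a uniform bound on $|\Phi_i(x)|$ is in hand, Lemma~\ref{Phi bound large x} will pop out immediately as the specialization to $|x|\ge 50$.
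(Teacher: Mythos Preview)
You correctly anticipate the paper's actual proof: it simply invokes the later Lemma~\ref{Bessel product missing a few} with $A=\emptyset$, applied to the product representation~\eqref{Phi as Ui product}. So on that level the proposal matches.

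However, the direct sketch you offer is \emph{not} the argument that proves Lemma~\ref{Bessel product missing a few}, and the difference is worth noting. You truncate at $\gamma=8|x|$, discard the \emph{low} zeros via $|J_0|\le 1$, and harvest decay from the \emph{high} zeros through $J_0(y)\le e^{-y^2/8}$ for $y\le 1$, then sum $\sum_{\gamma\ge 8|x|}(8x^2)/(\tfrac14+\gamma^2)$ by Riemann--von Mangoldt. The paper does the opposite: it discards the high zeros, keeps the low ones ($\gamma<3|x|$), uses the large-argument bound $|J_0(y)|\le\sqrt{2/(\pi y)}$ to show each retained factor is below~$\tfrac12$, and then merely \emph{counts} factors via the explicit inequality $N(T,\chi)>T/2$. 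The paper's route is more robust for the stated constants---each factor contributes a fixed multiplicative $\tfrac12$, so one needs only a lower bound on the zero count rather than a partial-summation estimate with controlled error; this is also why the version with factors missing (nonempty~$A$) is essentially free.

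Two small issues in your sketch: first, since $F(z,\chi)$ runs over $\gamma>0$ only, the relevant density is $(1/(2\pi))\log(q^*T/(2\pi))$, not $(1/\pi)\log(q^*T/(2\pi))$; your main term is thus $\tfrac{|x|}{2\pi}\log(4q^*|x|/\pi)$, half of what you wrote. Second (and you acknowledge this), turning the heuristic into the sharp inequality with exponent $\phi(q)/8$ at the threshold $|x|=50$ requires tracking the $O(\log(q^*T))$ error through partial summation; the halved main term still clears $|x|/2$ at $|x|=50$ with some margin, but the verification is not as clean as the paper's counting argument.
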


\begin{proof}
The lemma follows immediately from equation~\eqref{Phi as Ui product} and Lemma~\ref{Bessel product missing a few} with $A=\emptyset$.
\end{proof}

Ultimately we will want to compare the characteristic function $\Phi_i(x)$ to the characteristic function $e^{-V_ix^2/2}$ of a normal random variable with mean~$0$ and variance~$V_i$. The following proposition summaries the results of this section in that light.

\begin{proposition} \label{three ranges prop}
For $i\in\{1,2,3\}$ and $q>q_0$, and for any real number~$x$,
\[
\Phi_i(x) - e^{-V_ix^2/2} \ll \begin{cases}
V_ix^4 e^{-V_ix^2/2}, & \text{if } |x| \le V_i^{-1/4}, \\
e^{-V_i^{1/2}/2}, & \text{if } V_i^{-1/4} \le |x| \le 50, \\
e^{-\phi(q)|x|/8}, & \text{if } |x| \ge 50.
\end{cases}
\]
\end{proposition}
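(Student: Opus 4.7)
The plan is to treat each of the three ranges separately, in every case invoking the corresponding earlier result to control $|\Phi_i(x)|$ and then checking by a one-line estimate that the Gaussian $e^{-V_ix^2/2}$ is itself dominated by the same target, so that the triangle inequality delivers the bound for the difference.

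For the innermost range $|x|\le V_i^{-1/4}$, Proposition~\ref{small range prop} already gives $\Phi_i(x)=e^{-V_ix^2/2}\bigl(1+O(V_i|x|^4)\bigr)$; subtracting $e^{-V_ix^2/2}$ from both sides yields the stated bound $V_ix^4e^{-V_ix^2/2}$ immediately. For the middle range $V_i^{-1/4}\le|x|\le 50$, Lemma~\ref{Phi sorta decreasing} gives $|\Phi_i(x)|\ll e^{-V_i^{1/2}/2}$, and the lower bound $|x|\ge V_i^{-1/4}$ forces $V_ix^2\ge V_i^{1/2}$, so that also $e^{-V_ix^2/2}\le e^{-V_i^{1/2}/2}$; the triangle inequality then closes this case. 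For the tail $|x|\ge 50$, Lemma~\ref{Phi bound large x} gives $|\Phi_i(x)|\le e^{-\phi(q)|x|/8}$, while Definition~\ref{q0 def} supplies $V_i\ge\phi(q)$ for $q>q_0$, so $V_ix^2/2\ge \phi(q)\cdot 50\cdot|x|/2=25\phi(q)|x|\ge \phi(q)|x|/8$, whence $e^{-V_ix^2/2}\le e^{-\phi(q)|x|/8}$; the triangle inequality closes this case as well.

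There is no real obstacle: the substantive analytic content was already packaged into Proposition~\ref{small range prop}, Lemma~\ref{Phi sorta decreasing}, and Lemma~\ref{Phi bound large x}. The only items requiring verification are that the Gaussian majorant $e^{-V_ix^2/2}$ also obeys the claimed bound in each range. The breakpoint $|x|=V_i^{-1/4}$ was chosen precisely so that $V_ix^2$ equals $V_i^{1/2}$ there, making the second-range estimate immediate, and the breakpoint $|x|=50$ together with the hypothesis $V_i\ge\phi(q)$ from Definition~\ref{q0 def} ensures that the Gaussian decays faster than the linear-exponential bound in the tail range. Accordingly the proof amounts to assembling the three prior estimates and recording these two elementary comparisons.
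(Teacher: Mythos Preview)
Your proof is correct and follows essentially the same approach as the paper: the first range is immediate from Proposition~\ref{small range prop}, while for the second and third ranges you bound $|\Phi_i(x)|$ via Lemma~\ref{Phi sorta decreasing} and Lemma~\ref{Phi bound large x} respectively, then use the triangle inequality after checking that $e^{-V_ix^2/2}$ is itself dominated by the target bound. Your explicit verifications (that $V_ix^2\ge V_i^{1/2}$ when $|x|\ge V_i^{-1/4}$, and that $V_i\ge\phi(q)$ together with $|x|\ge 50$ forces $V_ix^2/2\ge\phi(q)|x|/8$) are exactly the content the paper leaves to the phrase ``due to the range of~$x$ in the second case and the definition of~$q_0$ in the third case.''
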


\begin{proof}
The first assertion is immediate from Proposition~\ref{small range prop}. For the second and third assertions, we use $|\Phi_i(x) - e^{-V_ix^2/2}| \le |\Phi_i(x)| + e^{-V_ix^2/2}$, and note that the term $e^{-V_ix^2/2}$ is insignificant compared to the asserted estimates (due to the range of~$x$ in the second case and the definition of~$q_0$ in the third case). Therefore the second assertion follows from Lemma~\ref{Phi sorta decreasing} while the third assertion follows from Lemma~\ref{Phi bound large x}.
\end{proof}

\section{Comparison of second derivatives of characteristic functions} \label{second deriv section}

We continue to use the methods of the previous section, now with the goal of providing analogous bounds for $\Phi_i''(x)$ for various ranges of~$x$, with an eye towards an eventual comparison with the second derivative of $e^{-V_ix^2/2}$. We are fortunate to have access to several different representations of $\Phi_i''(x)$, as no one of them will be entirely sufficient for our needs. We begin with the following power series representation.

\begin{lemma} \label{Phi'' series lemma}
For $i\in\{1,2,3\}$ and $|z| < \frac3{20}$, 
\begin{equation*}
\Phi''(z) = \Phi_i(z) \bigg\{ \bigg( V_i\sum_{m=1}^\infty 2mW_i(m)z^{2m-1} \bigg)^2 - V_i\sum_{m=1}^\infty 2m(2m-1)W_i(m)z^{2m-2} \bigg\}.
\end{equation*}
\end{lemma}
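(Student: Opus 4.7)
The plan is to apply the chain and product rules to the exponential representation furnished by Proposition~\ref{Phi exp series prop}. Set $f(z) = -V_i\sum_{m=1}^{\infty} W_i(m)z^{2m}$, so that $\Phi_i(z) = e^{f(z)}$ on the disk $|z| < \frac{3}{20}$. By Lemma~\ref{W bound lemma} we have $W_i(m) \ll (\tfrac{20}{3})^{2m}$, which shows that this series and its two formally differentiated series have radius of convergence at least $\frac{3}{20}$ and converge uniformly on compact subsets of this open disk; term-by-term differentiation is therefore legitimate, yielding
\[
f'(z) = -V_i\sum_{m=1}^{\infty} 2mW_i(m)z^{2m-1}, \qquad f''(z) = -V_i\sum_{m=1}^{\infty} 2m(2m-1)W_i(m)z^{2m-2}.
\]

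Next I would differentiate $\Phi_i = e^{f}$ twice. The chain rule gives $\Phi_i' = f'\,\Phi_i$, and applying the product rule to this identity yields $\Phi_i'' = f''\,\Phi_i + f'\,\Phi_i' = \Phi_i\bigl((f')^{2} + f''\bigr)$. Since $(-V_i)^{2} = V_i^{2}$, the constant $-V_i$ inside $f'(z)$ can be absorbed into the square, giving $(f'(z))^{2} = \bigl(V_i\sum_{m=1}^{\infty} 2mW_i(m)z^{2m-1}\bigr)^{2}$. Combined with the expression for $f''(z)$, this produces exactly the identity stated in the lemma.

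There is no substantial obstacle here; the content of the lemma amounts to computing two derivatives of a power series that has already been identified in Proposition~\ref{Phi exp series prop}. The only point requiring any care is the justification of term-by-term differentiation inside the open disk $|z| < \frac{3}{20}$, and the explicit decay rate for $W_i(m)$ from Lemma~\ref{W bound lemma} makes this immediate. (The constant $m=1$ term $z^{2m-2} = z^{0}$ in the second series is interpreted as $1$ in the usual way, so no issue arises at $z=0$.)
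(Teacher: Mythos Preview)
Your proof is correct and follows essentially the same approach as the paper: both apply the identity $(e^{h})'' = e^{h}\bigl((h')^{2}+h''\bigr)$ to the exponential representation of $\Phi_i$ from Proposition~\ref{Phi exp series prop}, differentiating the power series term by term. The paper's version is slightly terser, but the ingredients and the logic are the same.
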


\begin{proof}
We know that $(e^{h(z)})'' = e^{h(z)} \big( h'(z)^2 + h''(z) \big)$ for any smooth function~$h(z)$. The proposition follows from applying this identity with $h(z)$ equal to the power series in the exponent of the formula for $\Phi_i(z)$ given in Proposition~\ref{Phi exp series prop}, which can be differentiated term-by-term on any open set on which it converges.
\end{proof}

\begin{lemma} \label{small range '' lemma}
For $i\in\{1,2,3\}$ and $q>q_0$, we have
\[
\Phi''(z) = e^{-V_iz^2/2} \big( V_i^2 z^2 - V_i + O(V_i|z|^2 + V_i^3 |z|^6) \big)
\]
for $|z|\le V_i^{-1/4}$.
\end{lemma}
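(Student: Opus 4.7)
The plan is to combine the exact series identity of Lemma~\ref{Phi'' series lemma} with the Gaussian approximation $\Phi_i(z) = e^{-V_iz^2/2}(1+O(V_i|z|^4))$ from Proposition~\ref{small range prop}, which is valid throughout the disk $|z|\le V_i^{-1/4}$. Write the two inner sums as $S_1(z) = V_i\sum_{m\ge1} 2mW_i(m)z^{2m-1}$ and $S_2(z) = V_i\sum_{m\ge1} 2m(2m-1)W_i(m)z^{2m-2}$, so that Lemma~\ref{Phi'' series lemma} reads $\Phi_i''(z) = \Phi_i(z)\{S_1(z)^2 - S_2(z)\}$. Using $W_i(1)=\tfrac12$ (Lemma~\ref{W bound lemma}), the $m=1$ contributions are $V_iz$ and $V_i$ respectively, which will eventually supply the main terms $V_i^2z^2 - V_i$ in the asymptotic.

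For the tails I would invoke Lemma~\ref{general series lemma}, whose hypothesis $|z|\le\tfrac1{10}$ is comfortably satisfied since $V_i^{-1/4}\le\tfrac1{32}$ by Definition~\ref{q0 def}. Taking $(P(m),k,d)=(2m,2,-1)$ bounds the $S_1$-tail by $O(|z|^3)$, giving $S_1(z) = V_iz + O(V_i|z|^3)$ and hence $S_1(z)^2 = V_i^2z^2 + O(V_i^2|z|^4 + V_i^2|z|^6)$. Taking $(P(m),k,d) = (2m(2m-1),2,-2)$ bounds the $S_2$-tail by $O(|z|^2)$, giving $S_2(z) = V_i + O(V_i|z|^2)$. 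Thus the braced quantity equals $V_i^2z^2 - V_i + O\bigl(V_i|z|^2 + V_i^2|z|^4 + V_i^2|z|^6\bigr)$.

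Multiplying by $\Phi_i(z) = e^{-V_iz^2/2}(1+O(V_i|z|^4))$ extracts the desired Gaussian factor and introduces additional contributions $O(V_i|z|^4)\cdot(V_i^2z^2 - V_i) = O(V_i^3|z|^6) + O(V_i^2|z|^4)$, together with higher-order cross terms that are dominated. The final step is then to consolidate the four intermediate error terms $V_i|z|^2$, $V_i^2|z|^4$, $V_i^2|z|^6$, and $V_i^3|z|^6$ into the claimed pair $V_i|z|^2 + V_i^3|z|^6$. Setting $u := V_iz^2 \in [0, V_i^{1/2}]$, this reduces to the elementary inequality $u^2 \le \tfrac12(u+u^3)$ (equivalent to $u/(1+u^2)\le\tfrac12$), which absorbs $V_i^2|z|^4 = u^2$ into $V_i|z|^2 + V_i^3|z|^6$; and the extra factor $z^2\le V_i^{-1/2}\le 1$ reduces $V_i^2|z|^6$ to the same case. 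I do not anticipate a conceptual obstacle: the argument is entirely careful error-term bookkeeping, and the only real subtlety is recognizing that the two asymmetric powers $V_i|z|^2$ and $V_i^3|z|^6$ in the statement genuinely do dominate every intermediate error produced by the multiplication, via the substitution $u = V_iz^2$ and the bound $u\le V_i^{1/2}$.
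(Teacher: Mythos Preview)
Your proposal is correct and follows essentially the same route as the paper's proof: both invoke Lemma~\ref{Phi'' series lemma}, insert $W_i(1)=\tfrac12$ for the leading terms, bound the tails via Lemma~\ref{general series lemma}, multiply through by the Gaussian approximation of Proposition~\ref{small range prop}, and then absorb $V_i^2|z|^4$ into $V_i|z|^2 + V_i^3|z|^6$. Your explicit $u=V_iz^2$ argument for that final absorption is a nice touch; the paper simply asserts that $V_i^2|z|^4$ is dominated by one of the other two terms.
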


\begin{proof}
Using Proposition~\ref{small range prop} followed by Lemma~\ref{general series lemma} twice, we see that for $|z|\le V_i^{-1/4}$,
\begin{align*}
\Phi_i(z) &= e^{-V_iz^2/2}\big( 1+O( V_i|z|^4 ) \big) \\
\sum_{m=1}^\infty 2mW_i(m)z^{2m-1} &= z + O(|z|^3) \\
\sum_{m=1}^\infty 2m(2m-1)W_i(m)z^{2m-2} &= 1 + O(|z|^2)
\end{align*}
since $W_i(1) = \frac12$. (The second and third formulas require $|z| \le \frac1{10}$, which is implied by $|z|\le V_i^{-1/4}$ since $q>q_0$.)
Therefore by Lemma~\ref{Phi'' series lemma}, for $|z|\le V_i^{-1/4}$ we have
\begin{align}
\Phi_i(z) \bigg( V_i\sum_{m=1}^\infty 2mW_i(m)z^{2m-1} \bigg)^2 &= e^{-V_iz^2/2}\big( 1+O( V_i|z|^4 ) \big) \big( V_i(z + O(|z|^3)) \big)^2 \notag \\
&= e^{-V_iz^2/2} \big( V_i^2z^2 + O(V_i^2|z|^4 + V_i^3|z|^6)) \big) \label{for use later}
\end{align}
and ultimately
\begin{align*}
\Phi''(z) &= e^{-V_iz^2/2}\big( 1+O( V_i|z|^4 ) \big) \big\{ \big( V_i(z + O(|z|^3)) \big)^2 - (V_i(1 + O(|z|^2)) \big\} \\
&= e^{-V_iz^2/2}\big( 1+O( V_i|z|^4 ) \big) \big( V_i^2 z^2 - V_i + O(V_i|z|^2 + V_i^2 |z|^4) \big) \\
&= e^{-V_iz^2/2} \big( V_i^2 z^2 - V_i + O(V_i|z|^2 + V_i^2 |z|^4 + V_i^3 |z|^6) \big),
\end{align*}
which implies the statement of the proposition since $V_i^2 |z|^4$ is always dominated by one of the other two error terms.
\end{proof}

In the proof of Lemma~\ref{Phi sorta decreasing}, we used the fact that $\Phi(x)$ was a simple product of terms all of which were positive, and took their largest values, near the origin. The corresponding expression for $\Phi_i''(x)$ is more complicated, however, and involves functions whose values near the origin have both signs. We therefore establish a particular decomposition of $\Phi_i''(x)$ into two pieces, each of which has the unanimity of sign necessary for us to infer from Lemma~\ref{Bessel sorta decreasing lemma} that its largest values are near the origin.

It will be convenient to define the set of ordinates
\begin{equation} \label{Ui def}
U_i = \bigcup_{\chi \in H_i}\{ \gamma>0 \colon  L(1/2+i\gamma, \chi) = 0 \}
\end{equation}
that indexes the infinite product that defines $\Phi_i(z)$.

\begin{lemma} \label{Phi'' decomposition lemma}
For $i\in\{1,2,3\}$ and $z\in\C$ we may write
\begin{equation} \label{eq1}
\Phi''_i(z) = z^2 \Psi_i(z) + \Theta_i(z),
\end{equation}
where
\begin{align}
\Psi_i(z) &= \sum_{\substack{\gamma_1, \gamma_2 \in U_i \\ \gamma_1\ne\gamma_2}} \frac {64} {{\frac14+\gamma_1^2}} K\bigg( \frac {8z} {\sqrt{\frac14+\gamma_1^2}}\bigg)  \frac {64} {{\frac14+\gamma_2^2}} K\bigg( \frac {8z} {\sqrt{\frac14+\gamma_2^2}}\bigg) \prod_{\gamma \in U_i \setminus \{\gamma_1,\gamma_2\}} J_0\bigg( \frac {8z} {\sqrt{\frac14+\gamma^2}}\bigg) \label{Psii def} \\
\Theta_i(z) &= \sum_{\gamma_1 \in U_i}  \frac {64} {\frac14+\gamma_1^2} J_0''\bigg( \frac {8z} {\sqrt{\frac14+\gamma_1^2}}\bigg) \prod_{\gamma \in U_i \setminus \{\gamma_1\}} J_0\bigg( \frac {8z} {\sqrt{\frac14+\gamma^2}}\bigg). \label{Thetai def}
\end{align}
\end{lemma}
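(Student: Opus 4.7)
The plan is to obtain the decomposition by differentiating the infinite product representation of $\Phi_i(z)$ term-by-term and sorting the resulting contributions by pattern.

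First I would recall from Definition~\ref{F Phi def} that
\[
\Phi_i(z) = \prod_{\gamma \in U_i} f_\gamma(z), \qquad f_\gamma(z) = J_0\bigg( \frac{8z}{\sqrt{\tfrac14+\gamma^2}} \bigg),
\]
with the product converging uniformly on bounded subsets of $\C$ (as stated in Definition~\ref{F Phi def}, and underwritten by the bounds of Section~\ref{zeroth deriv section}). Each factor is entire, and once uniform convergence on compacta is in hand, $\Phi_i$ is entire and one may differentiate the product by the usual logarithmic-derivative calculus (or, equivalently, by differentiating finite partial products and passing to the limit).

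Next, I would compute the first two derivatives of $f_\gamma$. Using Definition~\ref{K def}, namely $J_0'(x) = -xK(x)$ and the chain rule,
\[
f_\gamma'(z) = -\frac{64z}{\tfrac14+\gamma^2}\,K\bigg( \frac{8z}{\sqrt{\tfrac14+\gamma^2}} \bigg), \qquad f_\gamma''(z) = \frac{64}{\tfrac14+\gamma^2}\,J_0''\bigg( \frac{8z}{\sqrt{\tfrac14+\gamma^2}} \bigg).
\]
The crucial point is that $f_\gamma'$ contains an explicit factor of $z$ out front, while $f_\gamma''$ does not; this is what will separate the two pieces of the decomposition.

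Then I would invoke the general formula for the second derivative of an (absolutely convergent) product. For finite products one has the identity
\[
\bigg(\prod_\gamma f_\gamma \bigg)'' = \sum_{\gamma_1} f_{\gamma_1}'' \prod_{\gamma\ne\gamma_1} f_\gamma + \sum_{\substack{\gamma_1,\gamma_2\\ \gamma_1\ne\gamma_2}} f_{\gamma_1}' f_{\gamma_2}' \prod_{\gamma\ne\gamma_1,\gamma_2} f_\gamma,
\]
where the double sum is over ordered pairs. Substituting the expressions for $f_\gamma'$ and $f_\gamma''$ computed above, the first sum becomes exactly $\Theta_i(z)$ as defined in~\eqref{Thetai def}, while the double sum produces an overall factor $z^2$ (from the two $z$'s in $f_{\gamma_1}'f_{\gamma_2}'$) times exactly the double sum defining $\Psi_i(z)$ in~\eqref{Psii def}. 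This gives \eqref{eq1}.

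The main obstacle—and the only non-routine point—is the passage from finite to infinite products, i.e.\ justifying that the two displayed series actually converge and that $\Phi_i'' = z^2\Psi_i + \Theta_i$ rather than merely being a formal identity for partial products. For this I would bound $|K(x)|\le\tfrac12$ and $|J_0''(x)|\le\tfrac12$ using \eqref{Bessel derivative bounds}, so the general term of $\Theta_i$ is dominated by $\tfrac{32}{1/4+\gamma_1^2}$ times the infinite product (which converges on compacta), giving uniform absolute convergence on compacta by Weierstrass M-test against $\sum_{\gamma\in U_i}(\tfrac14+\gamma^2)^{-1} \ll \sum_{\chi\in H_i}b_+(\chi) < \infty$; the double series in $\Psi_i$ is handled by the same M-test against the square of that convergent sum. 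Once both series converge uniformly on compacta in $\C$, taking the limit of the finite-product identity on truncations $U_i\cap[0,T]$ yields \eqref{eq1}, completing the proof.
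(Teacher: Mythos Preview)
Your proposal is correct and follows essentially the same approach as the paper: write $\Phi_i$ as the infinite product over $U_i$, apply the product rule twice, and use $J_0'(x)=-xK(x)$ so that the cross terms each carry a factor of $z$ (with the two minus signs cancelling) while the diagonal terms give $\Theta_i$. The only difference is that you supply a more explicit M-test justification for the termwise differentiation, whereas the paper simply invokes uniform convergence on bounded sets and differentiates.
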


\begin{proof}
By Definition~\ref{F Phi def},
\begin{equation} \label{Phi as Ui product}
\Phi_i(z) = \prod_{\chi \in H_i} \prod_{\substack {\gamma > 0 \\ L(1/2+i\gamma, \chi) = 0}}  J_0 \bigg( \frac {8z} {\sqrt{\frac14+\gamma^2}}\bigg) = \prod_{\gamma \in U_i} J_0 \bigg( \frac {8z} {\sqrt{\frac14+\gamma^2}}\bigg).
\end{equation}
This infinite product of analytic functions converges uniformly in any bounded subset of~$\C$ (since $J_0(z) = 1+O(z^2)$ near $z=0$ and the series $\sum_{\gamma\in U_i} 1/\gamma^2$ converges).
Therefore we may differentiate the infinite product by applying the product rule twice:
\begin{align*}
\Phi''(z) &= \sum_{\gamma_1 \in U_i}  \frac {d^2} {dz^2} J_0\bigg( \frac {8z} {\sqrt{\frac14+\gamma_1^2}}\bigg) \prod_{\gamma \in U_i \setminus \{\gamma_1\}} J_0\bigg( \frac {8z} {\sqrt{\frac14+\gamma^2}}\bigg)\\
&\qquad{}+  \sum_{\substack{\gamma_1, \gamma_2 \in U_i \\ \gamma_1\ne\gamma_2}} \frac d {dz} J_0\bigg( \frac {8z} {\sqrt{\frac14+\gamma_1^2}}\bigg)  \frac d {dz} J_0\bigg( \frac {8z} {\sqrt{\frac14+\gamma_2^2}}\bigg) \prod_{\gamma \in U_i \setminus \{\gamma_1,\gamma_2\}} J_0\bigg( \frac {8z} {\sqrt{\frac14+\gamma^2}}\bigg) \\
&= \sum_{\gamma_1 \in U_i}  \frac {64} {\frac14+\gamma_1^2} J_0''\bigg( \frac {8z} {\sqrt{\frac14+\gamma_1^2}}\bigg) \prod_{\gamma \in U_i \setminus \{\gamma_1\}} J_0\bigg( \frac {8z} {\sqrt{\frac14+\gamma^2}}\bigg) \\
&\qquad{}+  \sum_{\substack{\gamma_1, \gamma_2 \in U_i \\ \gamma_1\ne\gamma_2}} \frac {8} {\sqrt{\frac14+\gamma_1^2}} J_0'\bigg( \frac {8z} {\sqrt{\frac14+\gamma_1^2}}\bigg)  \frac {8} {\sqrt{\frac14+\gamma_2^2}} J_0'\bigg( \frac {8z} {\sqrt{\frac14+\gamma_2^2}}\bigg) \prod_{\gamma \in U_i \setminus \{\gamma_1,\gamma_2\}} J_0\bigg( \frac {8z} {\sqrt{\frac14+\gamma^2}}\bigg).
\end{align*}
Consulting Definition~\ref{K def} reveals that this last expression is the same as equation~\eqref{eq1} (the negative signs in the definition of $K(t)$ come in pairs).
\end{proof}

To efficiently bound, for small~$|z|$, the first component $z^2 \Psi_i(z)$ in the above decomposition of~$\Phi''(z)$, we need to first write it in a different form. Recall the function $D(x)$ from Definition~\ref{K def}.

\begin{lemma} \label{x^2 Psi different form lemma}
For $i\in\{1,2,3\}$ and complex numbers~$z$ satisfying $|z|\le\frac3{20}$,
\begin{equation} \label{x^2 Psi different form}
z^2 \Psi_i(z) = \Phi_i(z) \bigg\{ \bigg( \sum_{\gamma\in U_i} \frac{8}{\sqrt{\frac14+\gamma^2}} D\bigg( \frac{8z}{\sqrt{\frac14+\gamma^2}} \bigg) \bigg)^2 - \sum_{\gamma\in U_i} \frac{64}{\frac14+\gamma^2} D\bigg( \frac{8z}{\sqrt{\frac14+\gamma^2}} \bigg)^2 \bigg\}.
\end{equation}
In particular, for real numbers $x$ satisfying $|x|\le\frac1{32}$,
\begin{equation} \label{x^2 Psi upper bound}
x^2 \Psi_i(x) \le \Phi_i(x) \bigg\{ \bigg( \sum_{\gamma\in U_i} \frac{8}{\sqrt{\frac14+\gamma^2}} D\bigg( \frac{8x}{\sqrt{\frac14+\gamma^2}} \bigg) \bigg)^2.
\end{equation}
\end{lemma}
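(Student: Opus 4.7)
The plan is to prove the two assertions in sequence, with the first (the exact identity) being essentially an algebraic regrouping of the defining sum for $\Psi_i(z)$, and the second (the inequality) following from sign considerations on the resulting expression.

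For the first assertion, fix $z$ with $|z| \le \frac{3}{20}$. Writing $u_\gamma = 8z/\sqrt{\frac14+\gamma^2}$, each argument satisfies $|u_\gamma| \le 8 \cdot \frac{3}{20} / \frac12 = \frac{12}{5}$, so by Definition~\ref{Bessel def} we have $J_0(u_\gamma) \ne 0$ for every $\gamma \in U_i$. Thus we may multiply and divide each term of~\eqref{Psii def} by $J_0(u_{\gamma_1}) J_0(u_{\gamma_2})$, absorbing the remaining product into $\Phi_i(z)$ via~\eqref{Phi as Ui product}, to get
\[
\Psi_i(z) = \Phi_i(z) \sum_{\substack{\gamma_1,\gamma_2 \in U_i \\ \gamma_1 \ne \gamma_2}} \frac{64}{\tfrac14+\gamma_1^2} \frac{K(u_{\gamma_1})}{J_0(u_{\gamma_1})} \cdot \frac{64}{\tfrac14+\gamma_2^2} \frac{K(u_{\gamma_2})}{J_0(u_{\gamma_2})}.
\]
By Definition~\ref{K def}, $K(u)/J_0(u) = D(u)/u$, and substituting $u = u_\gamma$ gives $K(u_\gamma)/J_0(u_\gamma) = \sqrt{\tfrac14+\gamma^2}\,D(u_\gamma)/(8z)$. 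A brief simplification then yields
\[
z^2 \Psi_i(z) = \Phi_i(z) \sum_{\substack{\gamma_1,\gamma_2 \in U_i \\ \gamma_1 \ne \gamma_2}} \frac{8\,D(u_{\gamma_1})}{\sqrt{\tfrac14+\gamma_1^2}} \cdot \frac{8\,D(u_{\gamma_2})}{\sqrt{\tfrac14+\gamma_2^2}}.
\]

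Next, I would split the off-diagonal sum as the full double sum minus the diagonal, which gives exactly~\eqref{x^2 Psi different form}. Here I need to justify that everything is absolutely convergent so that the rearrangement is legal: since $D(w)$ is analytic with $D(0) = 0$, one has $D(u_\gamma) \ll |u_\gamma| \ll |z|/\sqrt{\tfrac14+\gamma^2}$ uniformly over this range of $z$, and the resulting bound on the general term is $O(|z|^2/((\tfrac14+\gamma_1^2)(\tfrac14+\gamma_2^2)))$; absolute convergence then reduces to the convergence of $\sum_{\gamma \in U_i} 1/(\tfrac14+\gamma^2)$, which is finite (it is essentially $\sum_{\chi \in H_i} b_+(\chi)$).

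For the inequality~\eqref{x^2 Psi upper bound}, take $x \in \mathbb{R}$ with $|x| \le \frac{1}{32}$. Then each $|u_\gamma| \le 8 \cdot \tfrac{1}{32} / \tfrac12 = \tfrac12$, so by Lemma~\ref{Bessel sorta decreasing lemma} every factor $J_0(u_\gamma)$ is positive, whence $\Phi_i(x) > 0$ via~\eqref{Phi as Ui product}. Moreover, $D(u_\gamma) \in \mathbb{R}$, so each $D(u_\gamma)^2 \ge 0$ and the second (subtracted) sum in~\eqref{x^2 Psi different form} is non-negative. Since $\Phi_i(x) \ge 0$, dropping this non-negative term can only increase the right-hand side, giving the claimed bound.

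The only subtle step is the absolute-convergence justification in the first part, since we are simultaneously unfolding an infinite product and manipulating a doubly-infinite series that contains removable singularities of $K/J_0$; the boundedness of $D$ near the origin is what keeps all the rearrangements legal, and handling it cleanly will be the main thing to watch.
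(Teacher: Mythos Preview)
Your proof is correct and takes a genuinely different route from the paper's. The paper computes $\Phi_i''(z)$ a second time via the identity $(e^{h})'' = e^{h}(h'^2+h'')$ applied to $h(z) = \sum_{\gamma\in U_i} \log J_0(u_\gamma)$, obtains an expression involving the $D$-functions, recognizes one piece of it as $\Theta_i(z)$, and then invokes the decomposition $\Phi_i'' = z^2\Psi_i + \Theta_i$ from Lemma~\ref{Phi'' decomposition lemma} to identify the remainder with $z^2\Psi_i(z)$. You instead work directly with the defining double sum~\eqref{Psii def} for $\Psi_i$: dividing and multiplying by the missing Bessel factors to pull out $\Phi_i(z)$, rewriting $K/J_0$ as $D/u$, and then recognizing the off-diagonal sum as (square of sum) minus (sum of squares). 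Your approach is more elementary and self-contained, since it never passes through $\Phi_i''$ or appeals back to Lemma~\ref{Phi'' decomposition lemma}; the paper's approach has the mild advantage that the convergence issues are automatically handled by the analyticity of $\log J_0$ on the relevant disk. One small remark: your manipulation divides by $z$ (via $K(u_\gamma)/J_0(u_\gamma) = D(u_\gamma)/u_\gamma$), so strictly speaking your derivation is valid for $z\ne 0$, with the case $z=0$ following by continuity since both sides of~\eqref{x^2 Psi different form} are analytic; this is worth a sentence. Your proof of the inequality~\eqref{x^2 Psi upper bound} is cleaner than the paper's, which takes a slight detour through the observation that the second sum is bounded by the square of the first.
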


\begin{proof}
Again we use $(e^{h(z)})'' = e^{h(z)} \big( h'(z)^2 + h''(z) \big)$, this time with $h(z)$ equal to the infinite series in the identity
\begin{align*}
\Phi_i(z) &= \exp\bigg( \sum_{\gamma\in U_i} \log J_0\bigg( \frac{8z}{\sqrt{\frac14+\gamma^2}} \bigg) \bigg),
\end{align*}
valid for $|z|\le\frac3{20}$ since the argument of~$J_0$ does not vanish there;
we obtain
\begin{align}
\Phi''_i(z) &= \Phi_i(z) \bigg( \sum_{\gamma\in U_i} \frac{8}{\sqrt{\frac14+\gamma^2}} \frac{J_0'\big( 8z/\sqrt{\frac14+\gamma^2} \big)}{J_0\big( 8z/\sqrt{\frac14+\gamma^2} \big)} \bigg)^2 \notag \\
&\qquad{}+ \Phi_i(z) \sum_{\gamma\in U_i} \frac{64}{\frac14+\gamma^2} \frac{J_0\big( 8z/\sqrt{\frac14+\gamma^2} \big) J_0''\big( 8z/\sqrt{\frac14+\gamma^2} \big) - J_0'\big( 8z/\sqrt{\frac14+\gamma^2} \big)^2}{J_0\big( 8z/\sqrt{\frac14+\gamma^2} \big)^2} \notag \\
&= \Phi_i(z) \bigg\{ \bigg( \sum_{\gamma\in U_i} \frac{8}{\sqrt{\frac14+\gamma^2}} D\bigg( \frac{8z}{\sqrt{\frac14+\gamma^2}} \bigg) \bigg)^2 - \sum_{\gamma\in U_i} \frac{64}{\frac14+\gamma^2} D\bigg( \frac{8z}{\sqrt{\frac14+\gamma^2}} \bigg)^2 \bigg\} \label{secretly Psi} \\
&\qquad{}+ \Phi_i(z) \sum_{\gamma\in U_i} \frac{64}{\frac14+\gamma^2} \bigg( \frac{J_0''\big( 8z/\sqrt{\frac14+\gamma^2} \big)}{J_0\big( 8z/\sqrt{\frac14+\gamma^2} \big)}\bigg) \notag
\end{align}
by Definition~\ref{K def} (the negative signs all occur in pairs).
However, using equation~\eqref{Phi as Ui product} yields
\begin{align*}
\Phi_i(z) & \sum_{\gamma\in U_i} \frac{64}{\frac14+\gamma^2} \bigg( \frac{J_0''\big( 8z/\sqrt{\frac14+\gamma^2} \big)}{J_0\big( 8z/\sqrt{\frac14+\gamma^2} \big)}\bigg) \\
&= \prod_{\gamma \in U_i} J_0 \bigg( \frac {8z} {\sqrt{\frac14+\gamma^2}}\bigg) \sum_{\gamma_1\in U_i} \frac{64}{\frac14+\gamma_1^2} J_0\bigg(\frac{8z}{\sqrt{\frac14+\gamma_1^2}}\bigg)^{-1} J_0''\bigg(\frac {8z}{\sqrt{\frac14+\gamma_1^2}}\bigg) = \Theta_i(z)
\end{align*}
by equation~\eqref{Thetai def}; thus by the identity~\eqref{eq1}, we conclude that the expression on line~\eqref{secretly Psi} must equal~$z^2 \Psi_i(z)$, establishing the first assertion of the lemma.

As for the second assertion, when~$z=x$ is a real number satisfying $|x|\le\frac1{32}$, all of the summands in the two series in equation~\eqref{secretly Psi} are positive by Lemma~\ref{Bessel sorta decreasing lemma}, since the argument of~$D$ is at most $16|x| \le \frac12$ in absolute value. Notice that the second sum in equation~\eqref{secretly Psi} consists precisely of the squares of the summands from the first sum; in particular, both sums are positive and the second sum is no larger than the square of the first sum. We may therefore ignore the second sum when finding an upper bound, which establishes the second assertion of the lemma.
\end{proof}

\begin{lemma} \label{middle range Psi lemma}
For $i\in\{1,2,3\}$ and $q>q_0$, we have $\Psi_i(x) \ll e^{-V_i^{1/2}/2} V_i^{3/2}$ for $|x| \ge V_i^{-1/4}$.
\end{lemma}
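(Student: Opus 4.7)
The plan is to reduce the bound for $|x|\ge V_i^{-1/4}$ to the single value $\Psi_i(V_i^{-1/4})$ via a termwise monotonicity argument, and then to estimate that single value using the closed-form expression from Lemma~\ref{x^2 Psi different form lemma}.

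First I would apply Lemma~\ref{Bessel sorta decreasing lemma} termwise in the formula~\eqref{Psii def}: each summand of $\Psi_i(x)$ is the product of two factors of the shape $\frac{64}{\frac14+\gamma^2}K(8x/\sqrt{\frac14+\gamma^2})$ together with an infinite product of factors $J_0(8x/\sqrt{\frac14+\gamma^2})$. Setting $\kappa=8V_i^{-1/4}/\sqrt{\frac14+\gamma^2}$ in each factor, the hypothesis $q>q_0$ forces $\kappa\le 16V_i^{-1/4}\le\tfrac12$, so Lemma~\ref{Bessel sorta decreasing lemma} applies: $K(\kappa)$ and $J_0(\kappa)$ are positive, and for $|x|\ge V_i^{-1/4}$ we have $|K(8x/\sqrt{\frac14+\gamma^2})|\le K(\kappa)$ and $|J_0(8x/\sqrt{\frac14+\gamma^2})|\le J_0(\kappa)$. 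Replacing each factor in~\eqref{Psii def} by its absolute value and then by its counterpart at $V_i^{-1/4}$ yields $|\Psi_i(x)|\le\Psi_i(V_i^{-1/4})$.

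Next I would bound $\Psi_i(V_i^{-1/4})$ using inequality~\eqref{x^2 Psi upper bound} of Lemma~\ref{x^2 Psi different form lemma}, which is valid since $V_i^{-1/4}\le\tfrac1{32}$ for $q>q_0$. Writing $A(z)$ for the sum over $\gamma\in U_i$ involving $D$, this gives $V_i^{-1/2}\Psi_i(V_i^{-1/4})\le\Phi_i(V_i^{-1/4})A(V_i^{-1/4})^2$. Proposition~\ref{small range prop} at $z=V_i^{-1/4}$ supplies $\Phi_i(V_i^{-1/4})\ll e^{-V_i^{1/2}/2}$; and the expansion $D(u)=\tfrac u2+O(u^3)$ (from which $|D(u)|\ll|u|$ uniformly for $|u|\le\tfrac12$), combined with the identity $\sum_{\gamma\in U_i}(\tfrac14+\gamma^2)^{-1}=V_i/32$ from Definition~\ref{V def}, yields $A(z)\ll|z|V_i$, so $A(V_i^{-1/4})^2\ll V_i^{3/2}$. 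Multiplying produces $V_i^{-1/2}\Psi_i(V_i^{-1/4})\ll V_i^{3/2}e^{-V_i^{1/2}/2}$.

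The main obstacle is recovering the exact exponent $V_i^{3/2}$ claimed in the lemma: dividing the previous inequality by $V_i^{-1/2}$ naively yields only $\Psi_i(V_i^{-1/4})\ll V_i^2 e^{-V_i^{1/2}/2}$, a factor of $V_i^{1/2}$ weaker than asserted. To close this gap I would try to exploit the subtractive cancellation in the full identity $z^2\Psi_i(z)=\Phi_i(z)(A(z)^2-B(z))$ from~\eqref{x^2 Psi different form}, in which $B(z)=\sum_{\gamma\in U_i}\frac{64}{\frac14+\gamma^2}D(8z/\sqrt{\frac14+\gamma^2})^2$ removes the diagonal $\gamma_1=\gamma_2$ contribution to $A(z)^2$; alternatively, applying the monotonicity argument directly to $x^2\Psi_i(x)$ rather than to $\Psi_i(x)$ would bypass the amplification by $V_i^{1/2}$ at the final division step, since $x^2\Psi_i(x)$ is itself naturally of size $V_i^{3/2}e^{-V_i^{1/2}/2}$ at $x=V_i^{-1/4}$.
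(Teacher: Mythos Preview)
Your argument is step-for-step the same as the paper's, and the ``obstacle'' you flag is real: you have in fact caught a slip in the paper's own proof. When the paper invokes inequality~\eqref{x^2 Psi upper bound} it writes
\[
\Psi_i(V_i^{-1/4}) \le \Phi_i(V_i^{-1/4}) \bigg( \sum_{\gamma\in U_i} \frac{8}{\sqrt{\tfrac14+\gamma^2}} D\bigg( \frac{8V_i^{-1/4}}{\sqrt{\tfrac14+\gamma^2}} \bigg) \bigg)^2,
\]
silently dropping the factor $x^2=V_i^{-1/2}$ that sits on the left of~\eqref{x^2 Psi upper bound}. With that factor reinstated, the paper's computation (which is exactly yours, via the power series for~$D$ and Lemma~\ref{general series lemma}) gives only $\Psi_i(V_i^{-1/4})\ll V_i^{2}e^{-V_i^{1/2}/2}$, just as you found.

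Neither of your proposed repairs recovers the exponent~$3/2$. The subtracted term in~\eqref{x^2 Psi different form} is negligible: each summand of $B(z)$ is the square of a summand of $A(z)$, so $B(V_i^{-1/4})\ll V_i^{-1/2}\cdot V_i = V_i^{1/2}$, tiny compared with $A(V_i^{-1/4})^2\asymp V_i^{3/2}$. And the termwise monotonicity cannot be transferred to $x^2\Psi_i(x)$, since the prefactor $x^2$ increases on $|x|\ge V_i^{-1/4}$; you would get $|x^2\Psi_i(x)|\le x^2\Psi_i(V_i^{-1/4})$, which is no help on~$[V_i^{-1/4},50]$.

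The honest conclusion is that the lemma as stated is too strong by a factor $V_i^{1/2}$; the bound that genuinely follows is $\Psi_i(x)\ll V_i^{2}e^{-V_i^{1/2}/2}$. This is entirely harmless downstream. In Lemma~\ref{middle range Theta lemma} the quantity actually used is $(V_i^{-1/4})^2\Psi_i(V_i^{-1/4})\ll V_i^{3/2}e^{-V_i^{1/2}/2}$, so that lemma is unaffected; in Lemma~\ref{middle range Phi '' lemma} and the third case of Proposition~\ref{three ranges '' prop} the bound becomes $V_i^{2}e^{-V_i^{1/2}/2}$ instead of $V_i^{3/2}e^{-V_i^{1/2}/2}$, still exponentially small in $V_i^{1/2}$, so the integrals in Lemma~\ref{Phi difference integrals} and everything thereafter go through without change.
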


\begin{proof}
In each factor in equation~\eqref{Psii def}, set $\kappa = 8V_i^{-1/4}/\sqrt{\frac14+\gamma^2}$. We have $0 \le \kappa \le 8V_i^{-1/4}/\frac12 \le 16\cdot\frac1{32} = \frac12$ since $q>q_0$, and therefore Lemma~\ref{Bessel sorta decreasing lemma} applies to each factor, yielding
\begin{align*}
|\Psi_i(x)| &= \sum_{\substack{\gamma_1, \gamma_2 \in U_i \\ \gamma_1\ne\gamma_2}} \frac {64} {{\frac14+\gamma_1^2}} K\bigg( \frac {8x} {\sqrt{\frac14+\gamma_1^2}}\bigg)  \frac {64} {{\frac14+\gamma_2^2}} K\bigg( \frac {8x} {\sqrt{\frac14+\gamma_2^2}}\bigg) \prod_{\gamma \in U_i \setminus \{\gamma_1,\gamma_2\}} J_0\bigg( \frac {8x} {\sqrt{\frac14+\gamma^2}}\bigg) \\
&\le \sum_{\substack{\gamma_1, \gamma_2 \in U_i \\ \gamma_1\ne\gamma_2}} \frac {64} {{\frac14+\gamma_1^2}} K\bigg( \frac {8V_i^{-1/4}} {\sqrt{\frac14+\gamma_1^2}}\bigg)  \frac {64} {{\frac14+\gamma_2^2}} K\bigg( \frac {8V_i^{-1/4}} {\sqrt{\frac14+\gamma_2^2}}\bigg) \prod_{\gamma \in U_i \setminus \{\gamma_1,\gamma_2\}} J_0\bigg( \frac {8V_i^{-1/4}} {\sqrt{\frac14+\gamma^2}}\bigg) \\
&= \Psi_i(V_i^{-1/4})
\end{align*}
for $|x| \ge V_i^{-1/4}$.
Since $V_i^{-1/4} \le \frac1{32}$ when $q>q_0$, we may apply the upper bound~\eqref{x^2 Psi upper bound} to obtain
\[
\Psi_i(V_i^{-1/4}) \le \Phi_i(V_i^{-1/4}) \bigg( \sum_{\gamma\in U_i} \frac{8}{\sqrt{\frac14+\gamma^2}} D\bigg( \frac{8V_i^{-1/4}}{\sqrt{\frac14+\gamma^2}} \bigg) \bigg)^2.
\]
It follows from Definitions~\ref{Bessel def} and~\ref{K def} that
\begin{align*}
D(t) = -\frac{J_0'(t)}{J_0(t)} = -\frac d{dt} \log J_0(t) = \sum_{m=1}^\infty |\lambda_{2m}| 2m t^{2m-1}
\end{align*}
for $|t| \le \frac{12}5$, and so
\begin{align*}
\Psi_i(V_i^{-1/4}) &\le  \Phi_i(V_i^{-1/4}) \bigg( \sum_{\gamma\in U_i} \frac{8}{\sqrt{\frac14+\gamma^2}} \sum_{m=1}^\infty |\lambda_{2m}| 2m \bigg( \frac{8V_i^{-1/4}}{\sqrt{\frac14+\gamma^2}} \bigg)^{2m-1} \bigg)^2 \\
&= \Phi_i(V_i^{-1/4}) \bigg( \sum_{m=1}^\infty |\lambda_{2m}| 2m \cdot 8^{2m}V_i^{-(2m-1)/4} \sum_{\gamma\in U_i} \frac{1}{(\frac14+\gamma^2)^m} \bigg)^2 \\
&= \Phi_i(V_i^{-1/4}) \bigg( V_i \sum_{m=1}^\infty 2mW_i(m)V_i^{-(2m-1)/4} \bigg)^2 \ll \Phi_i(V_i^{-1/4}) \big( V_i \cdot V_i^{-1/4} \big)^2
\end{align*}
by Lemma~\ref{general series lemma}. The statement of the proposition now follows from Proposition~\ref{small range prop}.
\end{proof}

The estimates we have derived for $\Phi_i''(x)$ and $\Psi_i(x)$ for small~$|x|$ imply a similar estimate for $\Theta_i(x)$ for small~$|x|$; thanks to Lemma~\ref{Bessel sorta decreasing lemma}, we can deduce an estimate for $\Theta_i(x)$ for large~$|x|$, which we can subsequently use to estimate $\Phi_i''(x)$ itself for larger~$|x|$.

\begin{lemma} \label{middle range Theta lemma}
For $i\in\{1,2,3\}$ and $q>q_0$, we have $\Theta_i(x) \ll e^{-V_i^{1/2}/2} V_i^{3/2}$ for $|x| \ge V_i^{-1/4}$.
\end{lemma}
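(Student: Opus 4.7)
The plan is to bound $\Theta_i(x)$ at the single point $x = V_i^{-1/4}$ first, and then use the monotonicity provided by Lemma~\ref{Bessel sorta decreasing lemma} to extend that bound to all $|x|\ge V_i^{-1/4}$. The latter step is the whole reason that $\Phi''$ was decomposed into the two pieces $z^2\Psi_i$ and $\Theta_i$: each piece is a sum whose summands have unanimous sign on the interval $[0,\tfrac12]$, so termwise applications of Lemma~\ref{Bessel sorta decreasing lemma} give the desired monotone bound.

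First I would observe that, since $q>q_0$ forces $\kappa := 8V_i^{-1/4}/\sqrt{\tfrac14+\gamma^2} \le 16\cdot\tfrac1{32} = \tfrac12$ for every $\gamma\in U_i$, the triangle inequality applied to~\eqref{Thetai def} together with the pointwise inequalities $|J_0(t)|\le J_0(\kappa)$ and $|J_0''(t)|\le |J_0''(\kappa)| = -J_0''(\kappa)$ (from Lemma~\ref{Bessel sorta decreasing lemma}) yields, for $|x|\ge V_i^{-1/4}$,
\[
|\Theta_i(x)| \le \sum_{\gamma_1\in U_i} \frac{64}{\frac14+\gamma_1^2} \bigl(-J_0''\bigl(\tfrac{8V_i^{-1/4}}{\sqrt{\frac14+\gamma_1^2}}\bigr)\bigr) \prod_{\gamma\in U_i\setminus\{\gamma_1\}} J_0\bigl(\tfrac{8V_i^{-1/4}}{\sqrt{\frac14+\gamma^2}}\bigr) = -\Theta_i(V_i^{-1/4}),
\]
and so $|\Theta_i(x)| \le |\Theta_i(V_i^{-1/4})|$.

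Next I would estimate $\Theta_i(V_i^{-1/4})$ via the decomposition from Lemma~\ref{Phi'' decomposition lemma}, which gives
\[
\Theta_i(V_i^{-1/4}) = \Phi_i''(V_i^{-1/4}) - V_i^{-1/2}\Psi_i(V_i^{-1/4}).
\]
Lemma~\ref{small range '' lemma} evaluated at $z = V_i^{-1/4}$ produces $\Phi_i''(V_i^{-1/4}) \ll e^{-V_i^{1/2}/2}V_i^{3/2}$, since the main terms become $V_i^{3/2}-V_i$ and the error terms $V_i|z|^2 + V_i^3|z|^6$ both reduce to $O(V_i^{3/2})$ at this value of~$z$. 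For the second piece, the proof of Lemma~\ref{middle range Psi lemma} in fact shows $\Psi_i(V_i^{-1/4})\ll \Phi_i(V_i^{-1/4})V_i^{3/2} \ll e^{-V_i^{1/2}/2}V_i^{3/2}$ (the final step using Proposition~\ref{small range prop}), whence $V_i^{-1/2}\Psi_i(V_i^{-1/4}) \ll e^{-V_i^{1/2}/2}V_i$. Combining these two estimates gives $|\Theta_i(V_i^{-1/4})| \ll e^{-V_i^{1/2}/2}V_i^{3/2}$, and inserting this into the previous paragraph completes the proof.

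I do not expect a genuine obstacle here: both ingredients—the sign-unanimity of the summands in~\eqref{Thetai def} on the relevant interval, and the already-established estimates for $\Phi_i''$ and $\Psi_i$ at the point $V_i^{-1/4}$—are fully set up by the earlier lemmas. The only point requiring a small amount of care is confirming that $J_0''(\kappa) < 0$ and $J_0(\kappa) > 0$ uniformly in the $\gamma$-dependent values of $\kappa\in[0,\tfrac12]$, which is exactly the content of Lemma~\ref{Bessel sorta decreasing lemma} and is what justifies replacing $|J_0''(\cdot)|$ and $|J_0(\cdot)|$ by $-J_0''$ and $J_0$ termwise when identifying the sum with $-\Theta_i(V_i^{-1/4})$.
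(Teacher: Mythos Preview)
Your proposal is correct and follows essentially the same route as the paper: first use Lemma~\ref{Bessel sorta decreasing lemma} termwise in~\eqref{Thetai def} to reduce to bounding $|\Theta_i(V_i^{-1/4})|$, then evaluate that via the decomposition $\Theta_i = \Phi_i'' - z^2\Psi_i$ together with Lemmas~\ref{small range '' lemma} and~\ref{middle range Psi lemma}. Your handling of the sign of $J_0''$ is in fact slightly more careful than the paper's own display, which writes $|\Theta_i(x)| \le \Theta_i(V_i^{-1/4})$ without explicitly noting that $\Theta_i(V_i^{-1/4})<0$.
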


\begin{proof}
In each factor in equation~\eqref{Thetai def}, set $\kappa = 8V_i^{-1/4}/\sqrt{\frac14+\gamma^2}$. We have $0 \le \kappa \le 8V_i^{-1/4}/\frac12 \le 16\cdot\frac1{32} = \frac12$ since $q>q_0$, and therefore Lemma~\ref{Bessel sorta decreasing lemma} applies to each factor, yielding
\begin{align*}
|\Theta_i(x)| &= \sum_{\gamma_1 \in U_i}  \frac {64} {\frac14+\gamma_1^2} J_0''\bigg( \frac {8x} {\sqrt{\frac14+\gamma_1^2}}\bigg) \prod_{\gamma \in U_i \setminus \{\gamma_1\}} J_0\bigg( \frac {8x} {\sqrt{\frac14+\gamma^2}}\bigg) \\
&\le \sum_{\gamma_1 \in U_i}  \frac {64} {\frac14+\gamma_1^2} J_0''\bigg( \frac {8x} {\sqrt{\frac14+\gamma_1^2}}\bigg) \prod_{\gamma \in U_i \setminus \{\gamma_1\}} J_0\bigg( \frac {8x} {\sqrt{\frac14+\gamma^2}}\bigg) = \Theta_i(V_i^{-1/4})
\end{align*}
for $|x| \ge V_i^{-1/4}$. On the other hand, by the identity~\eqref{eq1} and Lemmas~\ref{small range '' lemma} and~\ref{middle range Psi lemma} we have
\begin{align*}
\Theta_i(V_i^{-1/4}) &= \Phi_i''(V_i^{-1/4}) - (V_i^{-1/4})^2\Psi_i(V_i^{-1/4}) \\
&\ll e^{-V_i^{1/2}/2} V_i^{3/2} + V_i^{-1/2} e^{-V_i^{1/2}/2} V_i^{3/2} \ll e^{-V_i^{1/2}/2} V_i^{3/2}
\end{align*}
as desired.
\end{proof}

\begin{lemma} \label{middle range Phi '' lemma}
For $i\in\{1,2,3\}$ and $q>q_0$, we have $\Phi_i''(x) \ll e^{-V_i^{1/2}/2} V_i^{3/2}$ for $V_i^{-1/4} \le |x| \le 50$.
\end{lemma}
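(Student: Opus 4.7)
The plan is a direct assembly of the pieces already in place: invoke the decomposition $\Phi''_i(z) = z^2\Psi_i(z) + \Theta_i(z)$ of Lemma~\ref{Phi'' decomposition lemma} and then bound the two summands separately using Lemmas~\ref{middle range Psi lemma} and~\ref{middle range Theta lemma}. Both of those lemmas already give the right-hand bound $e^{-V_i^{1/2}/2}V_i^{3/2}$ throughout the range $|x| \ge V_i^{-1/4}$, which in particular covers the range $V_i^{-1/4} \le |x| \le 50$ required here.

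Concretely, the strategy proceeds as follows. First, write
\[
|\Phi''_i(x)| \le x^2|\Psi_i(x)| + |\Theta_i(x)|
\]
from identity~\eqref{eq1}. Second, since $V_i^{-1/4}\le|x|\le 50$, the hypothesis $|x|\ge V_i^{-1/4}$ needed for Lemmas~\ref{middle range Psi lemma} and~\ref{middle range Theta lemma} is satisfied, yielding
\[
|\Psi_i(x)| \ll e^{-V_i^{1/2}/2}V_i^{3/2} \quad\text{and}\quad |\Theta_i(x)| \ll e^{-V_i^{1/2}/2}V_i^{3/2}.
\]
Third, the factor $x^2$ is bounded by the absolute constant $2500$ in the range under consideration. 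Combining these three observations gives
\[
|\Phi''_i(x)| \le 2500 \cdot e^{-V_i^{1/2}/2}V_i^{3/2} + e^{-V_i^{1/2}/2}V_i^{3/2} \ll e^{-V_i^{1/2}/2}V_i^{3/2},
\]
which is the claimed bound.

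There is essentially no obstacle in this step; all of the analytic work has been carried out in the preceding lemmas, and the role of this lemma is simply to reassemble the two pieces of the decomposition of $\Phi''_i$ on the middle range. The conceptual reason this reassembly works cleanly is that the $|x|\le 50$ cutoff prevents the $x^2$ prefactor on $\Psi_i$ from inflating the estimate, while the lower cutoff $|x|\ge V_i^{-1/4}$ is precisely what enables the ``values near the origin are largest'' philosophy embodied in Lemma~\ref{Bessel sorta decreasing lemma} to transfer the small-$|x|$ control of $\Psi_i$ and $\Theta_i$ to this middle range.
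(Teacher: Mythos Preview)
Your proof is correct and follows essentially the same approach as the paper: invoke identity~\eqref{eq1}, apply Lemmas~\ref{middle range Psi lemma} and~\ref{middle range Theta lemma}, and absorb the bounded factor $x^2$ into the implied constant. The paper's own proof is the one-line version of exactly this argument.
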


\begin{proof}
The lemma follows immediately from the identity~\eqref{eq1} and Lemmas~\ref{middle range Psi lemma} and~\ref{middle range Theta lemma}, since $x \ll 1$ by assumption.
\end{proof}

Lastly, we use a standard method to estimate $\Phi_i''(x)$ for the largest values of~$|x|$. The proof is complicated only slightly by the fact that the relevant infinite products of Bessel functions are missing a small number of terms after the differentiations; the following lemma provides a serviceable bound that uniformly takes such omitted terms into account.

\begin{lemma} \label{Bessel product missing a few}
Fix $i\in\{1,2,3\}$. If $A$ is any finite subset of $U_i$, then for $|x| \ge 50$,
\[
\prod_{\gamma \in U_i \setminus A} J_0\bigg( \frac {8x} {\sqrt{\frac14+\gamma^2}}\bigg) < 2^{\#A} e^{-\phi(q)|x|/8}.
\]
\end{lemma}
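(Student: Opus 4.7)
The plan is to exploit the trivial bound $|J_0(y)|\le 1$ for all real~$y$ (the global maximum of $|J_0|$ on $\mathbb R$) so that adding or removing factors from a product of $|J_0|$-values is easy to control in one direction, and to extract the exponential decay in $|x|$ from a large subset of zeros $\gamma\in U_i$ at which the argument $8x/\sqrt{\tfrac14+\gamma^2}$ is forced into a window where $|J_0|$ is bounded away from~$1$ by an absolute constant. The factor $2^{\#A}$ will be paid as a harmless penalty for the handful of omitted zeros that may happen to lie in this favorable window.

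\smallskip

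First I would fix a window $[y_1,y_2]$ on which $|J_0(y)|\le\tfrac12$; since the first positive zero of $J_0$ sits near $2.405$ and $|J_0|$ stays comfortably below $\tfrac12$ in a neighborhood of it, one can take $[y_1,y_2]=[3/2,4]$. The condition $y_1\le 8x/\sqrt{\tfrac14+\gamma^2}\le y_2$ pins $\gamma$ to a range of the form $[c_1|x|,c_2|x|]$ for absolute constants $0<c_1<c_2$; let $B\subseteq U_i$ be the set of zeros falling in this range.

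\smallskip

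Next I would invoke the Riemann--von Mangoldt--style zero-counting formula for Dirichlet $L$-functions: for each $\chi\in H_i$ the number of ordinates $\gamma\in[c_1|x|,c_2|x|]$ with $L(\tfrac12+i\gamma,\chi)=0$ is of order $|x|\log(q|x|)$. Summing over the $\#H_i=\phi(q)/4$ characters in $H_i$ yields
\[
\#B \;\gg\; \phi(q)\,|x|\log q,
\]
which is comfortably larger than $\phi(q)|x|$ for $|x|\ge 50$ and $q\ge 2$.

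\smallskip

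Finally, since every factor $|J_0(\cdot)|$ is at most $1$ and the factors indexed by $B$ are at most $\tfrac12$, I would simply bound
\[
\prod_{\gamma\in U_i\setminus A}\Bigl|J_0\Bigl(\tfrac{8x}{\sqrt{\frac14+\gamma^2}}\Bigr)\Bigr|
\;\le\; \prod_{\gamma\in B\setminus A}\tfrac12
\;\le\; \Bigl(\tfrac12\Bigr)^{\#B-\#A}
\;=\; 2^{\#A}\cdot 2^{-\#B},
\]
and then use $2^{-\#B}\le e^{-\phi(q)|x|/8}$, which follows from the zero count provided $|x|\ge 50$. The one delicate point, and the main obstacle, is making the implied constants in the zero count sufficiently explicit to secure the specific exponent $1/8$; if the zero-counting estimate one has at hand is weaker than needed, the window $[y_1,y_2]$ can be widened (at the cost of a better value of the upper bound on $|J_0|$ on it) to push $\#B$ up, so the constant $1/8$ is in no way critical and can be absorbed by adjusting these choices.
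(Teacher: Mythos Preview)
Your argument is correct in outline and differs from the paper's in one structural respect. The paper does not isolate a fixed window $[y_1,y_2]$ on which $|J_0|\le\tfrac12$; instead it invokes the envelope bound $|J_0(y)|\le\sqrt{2/(\pi y)}$ and keeps \emph{all} zeros $\gamma<3|x|$, for which the argument $8x/\sqrt{\tfrac14+\gamma^2}$ exceeds $8/3$ and hence each factor is automatically below $\tfrac12$. The practical advantage is that the paper then only needs a one-sided explicit lower bound $N(T,\chi)>T/2$ (quoted for $T\ge150$), whereas your windowed count $\#B$ requires explicit upper \emph{and} lower bounds on $N(T,\chi)$ to control a difference $N(T_2,\chi)-N(T_1,\chi)$; such two-sided bounds exist, but the bookkeeping is heavier, which is exactly the ``delicate point'' you flag. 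Conversely, your route, once the constants are nailed down, actually gives $\#B\gg\phi(q)|x|\log(q|x|)$ and hence faster decay than the stated $e^{-\phi(q)|x|/8}$. One small numerical slip: $J_0(3/2)\approx 0.512>\tfrac12$, so $[3/2,4]$ does not quite work; shrinking the left endpoint to $1.6$ (where $J_0\approx 0.455$) repairs this with no effect on the rest of the argument.
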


\begin{proof}
Since both sides are even functions of~$x$, we may assume that~$x\ge50$.
Let $N(T,\chi)$ denote the number of nontrivial zeros of $L(s,\chi)$ having imaginary part between~$-T$ and~$T$. 
By \cite[Proposition~2.5]{EBPAP}, for $T\ge150$,
\begin{equation} \label{eq5}
N(T,\chi) \ge \bigg( \frac T\pi - 0.399\bigg) \log {\frac {q^*T}{2\pi e}} - 5.338 \ge \bigg( \frac T\pi - 0.399\bigg) \log {\frac T{2\pi e}} - 5.338 > \frac T2
\end{equation}
where $q^*\ge1$ is the conductor of~$\chi$.
From the classical inequality (see~\cite[Theorem 7.31.2]{Sz})
\[
|J_0(x)| \le \min \bigg\{1,\sqrt{\frac 2 {\pi |x|}} \bigg\},
\]
we see that
\[
\bigg| \prod_{\gamma \in U_i \setminus A} J_0\bigg( \frac {8x} {\sqrt{\frac14+\gamma^2}}\bigg)\bigg| \le \prod_{\substack{\gamma < 3x  \\ \gamma \in U_i \setminus A }} \bigg| J_0\bigg( \frac {8x} {\sqrt{\frac14+\gamma^2}}\bigg)\bigg| \le \prod_{\substack{\gamma < 3x  \\ \gamma \in U_i \setminus A }} \frac {(\frac14+\gamma^2)^{1/4}}{2\sqrt{\pi x}}.
\]
One can easily check that when $|x| \ge 50$ and $|\gamma| < 3x$, the factor $(\frac14+\gamma^2)^{1/4}/2\sqrt{\pi x}$ is always less than~$\frac12$.

If we define $N_+(T,\chi)$ to be the number of nontrivial zeros of $L(s,\chi)$ having imaginary part between~$0$ and~$T$, then $N(T,\chi) = N_+(T,\chi) + N_+(T,\overline\chi)$ by the functional equation. Since $\chi\in H_i$ if and only if $\overline\chi\in H_i$, the number of factors in the product is
\begin{align*}
\sum_{\chi\in H_i} & N_+(3x,\chi) - \#\{A\cup(-3x,3x)\} \\
&= \frac12 \bigg( \sum_{\chi\in H_i} N_+(3x,\chi) + \sum_{\overline\chi\in H_i} N_+(3x,\overline\chi) \bigg) - \#\{A\cup(-3x,3x)\} \\
&= \frac12 \sum_{\chi\in H_i} N(3x,\chi) - \#\{A\cup(-3x,3x)\} \ge \frac{\phi(q)}8 \frac{3x}2 - \#A.
\end{align*}
So
\[
\prod_{\substack{\gamma < 3x  \\ \gamma \in U_i \setminus A }} \frac {(\frac14+\gamma^2)^{1/4}}{2\sqrt{\pi |x|}} \le 2^{-(3\phi(q)x/16-\#A)} < 2^{\#A} e^{-\phi(q)x/8},
\]
since $\#H_i = \phi(q)/4$ by Remark~\ref{Hi remark}.
\end{proof}

\begin{lemma} \label{large range '' lemma}
For $i\in\{1,2,3\}$, we have $\Phi_i''(x) \ll V_i^2x^2 e^{-\phi(q)|x|/8}$ for $|x| \ge 50$. 
\end{lemma}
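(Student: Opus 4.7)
The plan is to start from the decomposition $\Phi_i''(x) = x^2 \Psi_i(x) + \Theta_i(x)$ provided by Lemma~\ref{Phi'' decomposition lemma} and bound the two pieces separately. In both cases the strategy is the same: apply the uniform pointwise bounds $|K(t)|\le\tfrac12$ and $|J_0''(t)|\le\tfrac12$ from~\eqref{Bessel derivative bounds} to the ``distinguished'' factors in the formulas~\eqref{Psii def} and~\eqref{Thetai def}, and invoke Lemma~\ref{Bessel product missing a few} to control the remaining (incomplete) product of $J_0$ factors by an exponentially decaying envelope in~$|x|$.

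For $\Theta_i(x)$ this gives, after pulling the bound $|J_0''|\le\tfrac12$ out of the sum and applying Lemma~\ref{Bessel product missing a few} with $A=\{\gamma_1\}$ (so $\#A=1$),
\[
|\Theta_i(x)| \le \sum_{\gamma_1\in U_i} \frac{64}{\frac14+\gamma_1^2}\cdot\frac12\cdot 2 e^{-\phi(q)|x|/8} = 64 e^{-\phi(q)|x|/8} \sum_{\gamma_1\in U_i} \frac{1}{\frac14+\gamma_1^2}.
\]
By the very definition of~$U_i$ in~\eqref{Ui def} together with Definitions~\ref{b+ def} and~\ref{V def}, the inner sum equals $\sum_{\chi\in H_i} b_+(\chi) = V_i/32$, so $|\Theta_i(x)| \ll V_i e^{-\phi(q)|x|/8}$. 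For $x^2\Psi_i(x)$, the identical reasoning applied to the double sum~\eqref{Psii def}, now using $|K|\le\tfrac12$ in both distinguished factors and Lemma~\ref{Bessel product missing a few} with $A=\{\gamma_1,\gamma_2\}$ (so $\#A=2$), yields
\[
|\Psi_i(x)| \ll e^{-\phi(q)|x|/8} \Bigl( \sum_{\gamma\in U_i} \frac{1}{\frac14+\gamma^2}\Bigr)^{2} \ll V_i^2 e^{-\phi(q)|x|/8},
\]
using the same evaluation of the inner sum as~$V_i/32$.

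Combining these two estimates via~\eqref{eq1} gives
\[
|\Phi_i''(x)| \le x^2 |\Psi_i(x)| + |\Theta_i(x)| \ll \bigl( V_i^2 x^2 + V_i\bigr) e^{-\phi(q)|x|/8},
\]
and the term $V_i$ is absorbed into $V_i^2 x^2$ since $x^2\ge 2500$ and $V_i\ge 1$. The main (minor) obstacle is simply making sure one applies Lemma~\ref{Bessel product missing a few} with the correct excluded set~$A$ in each of the two decomposition pieces, since losing track of the exponent of~$2$ there would spoil the uniformity; this is handled cleanly by the fact that Lemma~\ref{Bessel product missing a few} allows any fixed finite~$A$ and produces only a harmless factor of~$2^{\#A}$ that is absorbed into the implied constant.
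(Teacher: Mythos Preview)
Your proposal is correct and follows essentially the same approach as the paper's proof: both start from the decomposition~\eqref{eq1}, apply the pointwise bounds~\eqref{Bessel derivative bounds} to the distinguished factors, invoke Lemma~\ref{Bessel product missing a few} on the remaining incomplete products, recognize the resulting zero-sums as $V_i/32$, and then absorb the $V_i$ term into $V_i^2x^2$ using $|x|\ge 50$. Your version is in fact slightly more explicit about which set~$A$ is used in each application of Lemma~\ref{Bessel product missing a few}.
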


\begin{proof}
From equations~\eqref{Psii def} and~\eqref{Thetai def},
\begin{align*}
\Psi_i(x) &= \sum_{\substack{\gamma_1, \gamma_2 \in U_i \\ \gamma_1\ne\gamma_2}} \frac {64} {{\frac14+\gamma_1^2}} K\bigg( \frac {8x} {\sqrt{\frac14+\gamma_1^2}}\bigg)  \frac {64} {{\frac14+\gamma_2^2}} K\bigg( \frac {8x} {\sqrt{\frac14+\gamma_2^2}}\bigg) \prod_{\gamma \in U_i \setminus \{\gamma_1,\gamma_2\}} J_0\bigg( \frac {8x} {\sqrt{\frac14+\gamma^2}}\bigg), \\
\Theta_i(x) &= \sum_{\gamma_1 \in U_i}  \frac {64} {\frac14+\gamma_1^2} J_0''\bigg( \frac {8x} {\sqrt{\frac14+\gamma_1^2}}\bigg) \prod_{\gamma \in U_i \setminus \{\gamma_1\}} J_0\bigg( \frac {8x} {\sqrt{\frac14+\gamma^2}}\bigg).
\end{align*}
We apply equation~\eqref{Bessel derivative bounds} and Lemma~\ref{Bessel product missing a few} to obtain, for $|x|\ge50$,
\begin{align*}
\Psi_i(x) &\ll \sum_{\substack{\gamma_1, \gamma_2 \in U_i \\ \gamma_1\ne\gamma_2}} \frac {32}{{\frac14+\gamma_1^2}}  \frac{32}{{\frac14+\gamma_2^2}} e^{-\phi(q)|x|/8} \le \bigg( \sum_{\gamma_1 \in U_i}  \frac{32}{\frac14+\gamma_1^2} \bigg)^2 e^{-\phi(q)|x|/8} = V_i^2 e^{-\phi(q)|x|/8} , \\
\Theta_i(x) &\ll \sum_{\gamma_1 \in U_i}  \frac{32}{\frac14+\gamma_1^2} e^{-\phi(q)|x|/8} = V_i e^{-\phi(q)|x|/8}.
\end{align*}
So by equation~\eqref{eq1},
\[
\Phi_i''(x) = x^2 \Psi_i(x) + \Theta_i(x) \ll (V_i^2x^2+V_i) e^{-\phi(q)|x|/8},
\]
which implies the statement of the lemma since $V_i \ll V_i^2x^2$ in this range.
\end{proof}

\begin{remark} \label{Phi' decays}
The method of proof of Proposition~\ref{Phi'' decomposition lemma} gives the expression
\[
\Phi_i'(z) = z \sum_{\gamma_1 \in U_i} \frac {64} {{\frac14+\gamma_1^2}} K\bigg( \frac {8z} {\sqrt{\frac14+\gamma_1^2}}\bigg) \prod_{\gamma \in U_i \setminus \{\gamma_1\}} J_0\bigg( \frac {8z} {\sqrt{\frac14+\gamma^2}}\bigg)
\]
for the first derivative of $\Phi(z)$, from which the estimate $\Phi_i'(x) \ll V_i |x| e^{-\phi(q)|x|/8}$ for $|x| \ge 50$ follows from the method of proof of Lemma~\ref{large range '' lemma}; in particular, $\Phi_i'(x)$ tends to~$0$ as $|x|\to\infty$, a fact we will need later.
\end{remark}

We may now assemble the various bounds for $\Phi_i''(x)$ derived in this section to compare that function to $(V_i^2z^2-V_i) e^{-V_iz^2/2}$, which is the second derivative of the characteristic function $e^{-V_iz^2/2}$ of a normal random variable with mean~$0$ and variance~$V_i$.

\begin{proposition} \label{three ranges '' prop}
For $i\in\{1,2,3\}$ and $q>q_0$, and for any real number $x$,
\[
\Phi_i''(x) - (V_i^2x^2-V_i) e^{-V_ix^2/2} \ll \begin{cases}
V_ix^2 e^{-V_ix^2/2}, & \text{if } |x| \le V_i^{-1/2}, \\
V_i^3x^6 e^{-V_ix^2/2}, & \text{if } V_i^{-1/2} \le |x| \le V_i^{-1/4}, \\
V_i^{3/2} e^{-V_i^{1/2}/2}, & \text{if } V_i^{-1/4} \le |x| \le 50, \\
V_i^2x^2 e^{-\phi(q)|x|/8}, & \text{if } |x| \ge 50.
\end{cases}
\]
\end{proposition}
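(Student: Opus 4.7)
The proof is essentially an assembly of the range-specific bounds already established in this section, with the main term $(V_i^2 x^2 - V_i) e^{-V_i x^2/2}$ handled differently on each range. The plan is to treat the four ranges in turn.

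\textbf{First two ranges} $(|x| \le V_i^{-1/4})$: These follow directly from Lemma~\ref{small range '' lemma}, which gives
\[
\Phi_i''(x) - (V_i^2 x^2 - V_i) e^{-V_i x^2/2} \ll e^{-V_i x^2/2}\bigl(V_i x^2 + V_i^3 |x|^6\bigr).
\]
I would then split according to which of the two error terms dominates. The ratio $V_i^3|x|^6 / (V_i x^2) = V_i^2 x^4$ equals $1$ precisely at $|x| = V_i^{-1/2}$, so for $|x| \le V_i^{-1/2}$ the $V_i x^2$ term wins, while for $V_i^{-1/4} \ge |x| \ge V_i^{-1/2}$ the $V_i^3|x|^6$ term wins. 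This produces exactly the first two cases of the proposition.

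\textbf{Third range} $(V_i^{-1/4} \le |x| \le 50)$: Lemma~\ref{middle range Phi '' lemma} already gives $\Phi_i''(x) \ll V_i^{3/2} e^{-V_i^{1/2}/2}$, so by the triangle inequality it suffices to show that the Gaussian second-derivative term satisfies the same bound. The point here is that the function $u \mapsto V_i^2 u\, e^{-V_i u/2}$ is maximized at $u = 2/V_i$, and for $q > q_0$ one has $V_i^{-1/4} \ge \sqrt{2/V_i}$ (since $V_i \ge 2^{20}$), so for $|x| \ge V_i^{-1/4}$ the function $V_i^2 x^2 e^{-V_i x^2/2}$ is decreasing in $|x|$. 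Evaluating at $|x| = V_i^{-1/4}$ gives
\[
\bigl| (V_i^2 x^2 - V_i) e^{-V_i x^2/2} \bigr| \le V_i^2 x^2 e^{-V_i x^2/2} \le V_i^2 \cdot V_i^{-1/2} e^{-V_i^{1/2}/2} = V_i^{3/2} e^{-V_i^{1/2}/2}.
\]

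\textbf{Fourth range} $(|x| \ge 50)$: Lemma~\ref{large range '' lemma} gives $\Phi_i''(x) \ll V_i^2 x^2 e^{-\phi(q)|x|/8}$, and it remains to absorb $(V_i^2 x^2 - V_i) e^{-V_i x^2/2}$ into this. Here I use $V_i \ge \phi(q)$ (from Definition~\ref{q0 def}) and $x^2 \ge 50|x| \ge 4|x|$, so that
\[
\tfrac12 V_i x^2 \ge \tfrac12 \phi(q) \cdot 50 |x| \ge \tfrac18 \phi(q)|x|,
\]
hence $e^{-V_i x^2/2} \le e^{-\phi(q)|x|/8}$, and the Gaussian term is thus bounded by $V_i^2 x^2 e^{-\phi(q)|x|/8}$ as well.

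The argument is essentially bookkeeping; no step is a serious obstacle because all the real work has been done in the lemmas of this section. The only point requiring any thought is the monotonicity observation in the third range, which is needed to avoid a spurious $V_i^2$ factor when bounding the Gaussian piece.
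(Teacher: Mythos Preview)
Your proposal is correct and follows essentially the same approach as the paper: the first two ranges come directly from Lemma~\ref{small range '' lemma}, and for the third and fourth ranges you use the triangle inequality together with Lemmas~\ref{middle range Phi '' lemma} and~\ref{large range '' lemma}, absorbing the Gaussian term into the stated bound. Your treatment of the Gaussian piece in the third and fourth ranges is in fact more explicit than the paper's (which simply asserts that $V_i^2 x^2 e^{-V_i x^2/2}$ is ``insignificant'' in those ranges), so the monotonicity check and the exponential comparison you supply are welcome clarifications rather than deviations.
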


\begin{proof}
The first two assertions are immediate from Lemma~\ref{small range '' lemma}.
For the third and fourth assertions, we use
\[
|\Phi_i''(x) - (V_i^2x^2-V_i) e^{-V_ix^2/2}| \le |\Phi_i''(x)| + (V_i^2x^2-V_i) e^{-V_ix^2/2} \le |\Phi_i''(x)| + V_i^2x^2 e^{-V_ix^2/2},
\]
and note that the term $V_i^2x^2 e^{-V_ix^2/2}$ is insignificant compared to the asserted estimates (due to the range of~$x$ in the third case and the definition of~$q_0$ in the fourth case). Therefore the third assertion follows from Lemma~\ref{middle range Phi '' lemma} while the fourth assertion follows from Lemma~\ref{large range '' lemma}.
\end{proof}

\section{Comparison of probabilities} \label{final assembly section}

We are now able to estimate the difference between probabilities involving the random variables~$X_i$ from Definition~\ref{Xi def} and normal random variables of the same mean and variance. Using the results of the previous sections, we will bound the integrals of their characteristic functions and the second derivatives thereof over~$\R$; subsequently we will be able to bound the difference between their density functions themselves. We begin with a quick and standard lemma giving the order of magnitude of even moments of a normal distribution.

\begin{lemma} \label{even moments lemma}
For any positive constant~$C$ and any nonnegative integer~$m$,
\begin{align*}
\int_{-\infty}^\infty x^{2m} e^{-Cx^2} \,dx \ll_m \frac1{C^{m+1/2}}.
\end{align*}
\end{lemma}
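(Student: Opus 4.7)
The plan is to reduce the integral to a dimensionless constant by a straightforward rescaling. Specifically, I would substitute $u=\sqrt{C}\,x$, so that $du=\sqrt{C}\,dx$ and $x^{2m}=u^{2m}/C^m$. This transforms the integral into
\[
\int_{-\infty}^\infty x^{2m} e^{-Cx^2}\,dx = \frac{1}{C^{m+1/2}} \int_{-\infty}^\infty u^{2m} e^{-u^2}\,du.
\]
The factor $C^{-(m+1/2)}$ is exactly what the lemma claims, so it remains only to show that the remaining integral over $u$ is a finite constant depending only on~$m$.

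For that remaining integral, one route is to identify it with the Gamma function: setting $v=u^2$ on the positive half-line gives
\[
\int_{-\infty}^\infty u^{2m} e^{-u^2}\,du = 2\int_0^\infty u^{2m} e^{-u^2}\,du = \int_0^\infty v^{m-1/2} e^{-v}\,dv = \Gamma\bigl(m+\tfrac12\bigr),
\]
which is finite for every nonnegative integer~$m$ and therefore $O_m(1)$. (If one prefers to avoid the Gamma function, the same conclusion follows from the crude bound $u^{2m} e^{-u^2} \le (2m)^m e^{-u^2/2}$ for $|u|\ge \sqrt{2m}$, combined with the trivial bound on the compact piece $|u|\le\sqrt{2m}$.) Combining this with the previous display yields the claimed estimate.

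There is no real obstacle here; the only thing to observe is that the implicit constant is genuinely allowed to depend on~$m$, which is what the $\ll_m$ notation in the statement permits. No analytic subtlety beyond the convergence of the Gaussian moment integral is involved.
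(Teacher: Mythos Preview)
Your argument is correct. The paper takes a slightly different route: it verifies the $m=0$ case directly from the Gaussian integral and then, for $m\ge1$, integrates by parts (writing $x^{2m}e^{-Cx^2}$ as $x^{2m-1}\cdot xe^{-Cx^2}$) to reduce the $2m$th moment to a constant multiple of the $(2m-2)$th moment divided by~$C$, finishing by induction on~$m$. Your change of variables $u=\sqrt{C}\,x$ is arguably more direct, since it isolates the $C^{-(m+1/2)}$ factor in one step and leaves a universal constant that you then identify as $\Gamma(m+\tfrac12)$; the paper's approach, by contrast, makes the recursion for Gaussian moments explicit. Both are entirely standard and either suffices here.
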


\begin{proof}
When $m=0$, the formula $\int_{-\infty}^\infty e^{-Cx^2} \,dx = \sqrt{2\pi/C}$ is well known. For $m\ge1$, we integrate by parts to obtain
\begin{align*}
\int_{-\infty}^\infty x^{2m} e^{-Cx^2/2} \,dx &= \int_{-\infty}^\infty x^{2m-1} \cdot xe^{-Cx^2/2} \,dx = \int_{-\infty}^\infty (2m-1)x^{2m-2} \cdot \frac 2C e^{-Cx^2/2} \,dx,
\end{align*}
from which the lemma follows by induction on~$m$.
\end{proof}

\begin{lemma} \label{Phi difference integrals}
Assume GRH. For $i\in\{1,2,3\}$ and $q>q_0$, we have
\begin{align*}
\int_{-\infty}^\infty \big| \Phi_i(x) - e^{-V_ix^2/2} \big| \,dx &\ll V_i^{-3/2} \\
\int_{-\infty}^\infty \big| \Phi_i''(x) - (V_i^2x^2-V_i)e^{-V_ix^2/2} \big| \,dx &\ll V_i^{-1/2}.
\end{align*}
\end{lemma}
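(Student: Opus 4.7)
The plan is to split each integral over $\R$ into the ranges appearing in Propositions~\ref{three ranges prop} and~\ref{three ranges '' prop}, apply the pointwise bounds given there, and evaluate the resulting elementary integrals using Lemma~\ref{even moments lemma} for the Gaussian-weighted pieces and direct integration for the exponentially decaying pieces. Because both $|\Phi_i - e^{-V_ix^2/2}|$ and $|\Phi_i'' - (V_i^2x^2-V_i)e^{-V_ix^2/2}|$ are even functions of~$x$, I may work on $x\ge0$ and double at the end.

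For the first integral, the near-origin range $|x|\le V_i^{-1/4}$ contributes, by Proposition~\ref{three ranges prop} and Lemma~\ref{even moments lemma} with $m=2$ and $C=V_i/2$,
\[
\int_{-V_i^{-1/4}}^{V_i^{-1/4}} V_ix^4 e^{-V_ix^2/2}\,dx \;\ll\; V_i \cdot V_i^{-5/2} \;=\; V_i^{-3/2}.
\]
The middle range $V_i^{-1/4}\le|x|\le50$ has length $O(1)$ and integrand $\ll e^{-V_i^{1/2}/2}$, contributing far less than $V_i^{-3/2}$. The tail $|x|\ge50$ contributes
\[
\int_{50}^\infty e^{-\phi(q)x/8}\,dx \;\ll\; \phi(q)^{-1}e^{-50\phi(q)/8},
\]
also negligible because $V_i\ge\phi(q)\ge1$ when $q>q_0$. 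Summing gives the stated bound $V_i^{-3/2}$.

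For the second integral, the two innermost ranges both contribute $V_i^{-1/2}$, which I verify using Lemma~\ref{even moments lemma}: on $|x|\le V_i^{-1/2}$ I get $V_i\int x^2e^{-V_ix^2/2}\,dx\ll V_i\cdot V_i^{-3/2}=V_i^{-1/2}$, and on $V_i^{-1/2}\le|x|\le V_i^{-1/4}$ I get $V_i^3\int x^6 e^{-V_ix^2/2}\,dx\ll V_i^3\cdot V_i^{-7/2}=V_i^{-1/2}$. The range $V_i^{-1/4}\le|x|\le50$ has length $O(1)$ and integrand $\ll V_i^{3/2}e^{-V_i^{1/2}/2}$, which decays faster than any power of $V_i^{-1}$. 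Finally on $|x|\ge50$ I compute
\[
V_i^2\int_{50}^\infty x^2 e^{-\phi(q)x/8}\,dx \;\ll\; \frac{V_i^2}{\phi(q)^3} \;\ll\; \frac{V_i}{\phi(q)^2} \;\ll\; V_i^{-1},
\]
using $V_i\ge\phi(q)$ repeatedly (Definition~\ref{q0 def}). All four contributions are therefore $\ll V_i^{-1/2}$, establishing the second estimate.

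There is no real obstacle here beyond organizing the case analysis carefully and confirming that the transitional ranges (where the relevant Proposition changes its bound) genuinely lie in the interior of their stated intervals, which is guaranteed by the lower bound $V_i\ge 2^{20}$ from Definition~\ref{q0 def}. The only mild check to make is that the Gaussian-moment estimates from Lemma~\ref{even moments lemma} dominate over the merely exponential decays in the outer tail, which follows from $V_i\gg\phi(q)\log q$ established in Proposition~\ref{V and eta sizes prop}.
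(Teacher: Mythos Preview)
Your approach is identical to the paper's: split into the ranges from Propositions~\ref{three ranges prop} and~\ref{three ranges '' prop}, apply Lemma~\ref{even moments lemma} to the Gaussian pieces, and integrate the exponential tails directly. The near-origin and middle-range estimates are handled correctly.

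There is one slip in the tail computation for the second integral. You bound $\int_{50}^\infty x^2 e^{-\phi(q)x/8}\,dx \ll \phi(q)^{-3}$ (correct, since $\int_0^\infty x^2 e^{-cx}\,dx = 2/c^3$), but then the chain
\[
\frac{V_i^2}{\phi(q)^3} \;\ll\; \frac{V_i}{\phi(q)^2} \;\ll\; V_i^{-1}
\]
goes the wrong way: the inequality $V_i\ge\phi(q)$ gives $V_i^2/\phi(q)^3 \ge V_i/\phi(q)^2$, not the reverse. The repair is immediate. Either keep the exponential factor $e^{-50\phi(q)/8}$ in the tail integral (as the paper does), which swamps any polynomial in $V_i$ and $\phi(q)$; or invoke Proposition~\ref{V and eta sizes prop} to write $V_i\asymp\phi(q)\log q$, whence $V_i^2/\phi(q)^3 \asymp (\log q)^2/\phi(q) \ll V_i^{-1/2}$. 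With this fix your argument is complete and matches the paper's.
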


\begin{proof}
We write
\begin{multline*}
\int_{-\infty}^\infty \big| \Phi_i(x) - e^{-V_ix^2/2} \big| \,dx = \int_{|x| \le V_i^{-1/4}} \big| \Phi_i(x) - e^{-V_ix^2/2} \big| \,dx \\
+ \int_{V_i^{-1/4} \le |x| \le 50} \big| \Phi_i(x) - e^{-V_ix^2/2} \big| \,dx + \int_{|x| \ge 50} \big| \Phi_i(x) - e^{-V_ix^2/2} \big| \,dx.
\end{multline*}
Using the bounds in Proposition~\ref{three ranges prop},
\begin{align*}
\int_{-\infty}^\infty \big| \Phi_i(x) - e^{-V_ix^2/2} \big| \,dx &\ll \int_{|x| \le V_i^{-1/4}} V_ix^4 e^{-V_ix^2/2} \,dx \\
&\qquad{}+ \int_{V_i^{-1/4} \le |x| \le 50} e^{-V_i^{1/2}/2} \,dx + \int_{|x| \ge 50} e^{-\phi(q)|x|/8} \,dx \\
&\le V_i \int_{-\infty}^\infty x^4 e^{-V_ix^2/2} \,dx + 100e^{-V_i^{1/2}/2} + 2\int_{50}^\infty e^{-\phi(q)x/8} \,dx \\
&\ll V_i \cdot \frac1{V_i^{5/2}} + e^{-V_i^{1/2}/2} + \frac1{\phi(q)} e^{-\phi(q)50/8} \ll V_i^{-3/2}
\end{align*}
by Lemma~\ref{even moments lemma}, where the final simplification uses Proposition~\ref{V and eta sizes prop}.

Similarly, we write
\begin{align*}
\int_{-\infty}^\infty \big| \Phi_i''(x) - (V_i^2x^2-V_i) e^{-V_ix^2/2} \big| \,dx = & \int_{|x| \le V_i^{-1/2}} \big| \Phi_i''(x) - (V_i^2x^2-V_i) e^{-V_ix^2/2} \big| \,dx \\
&+ \int_{V_i^{-1/2} \le |x| \le V_i^{-1/4}} \big| \Phi_i''(x) - (V_i^2x^2-V_i) e^{-V_ix^2/2} \big| \,dx \\
&+ \int_{V_i^{-1/4} \le |x| \le 50} \big| \Phi_i''(x) - (V_i^2x^2-V_i) e^{-V_ix^2/2} \big| \,dx \\
&+ \int_{|x| \ge 50} \big| \Phi_i''(x) - (V_i^2x^2-V_i) e^{-V_ix^2/2} \big| \,dx.
\end{align*}
Using the bounds in Proposition~\ref{three ranges '' prop},
\begin{align*}
\int_{-\infty}^\infty \big| \Phi_i''(x) - e^{-V_ix^2/2} \big| \,dx &\ll \int_{|x| \le V_i^{-1/2}} V_ix^2 e^{-V_ix^2/2} \,dx + \int_{V_i^{-1/2} \le |x| \le V_i^{-1/4}} V_i^3x^6 e^{-V_ix^2/2} \,dx \\
&\qquad{}+ \int_{V_i^{-1/4} \le |x| \le 50} V_i^{3/2} e^{-V_i^{1/2}/2} \,dx + \int_{|x| \ge 50} V_i^2x^2 e^{-\phi(q)|x|/8} \,dx \\
&\le V_i \int_{-\infty}^\infty x^2 e^{-V_ix^2/2} \,dx + V_i^3 \int_{-\infty}^\infty x^6 e^{-V_ix^2/2} \,dx \\
&\qquad{}+ 100V_i^{3/2} e^{-V_i^{1/2}/2} + 2V_i^2 \int_{50}^\infty x^2 e^{-\phi(q)x/8} \,dx \\
&\ll V_i \cdot \frac1{V_i^{3/2}} + V_i^3 \cdot \frac1{V_i^{7/2}} + e^{-V_i^{1/2}/2} + \frac1{\phi(q)} e^{-\phi(q)50/8} \ll V_i^{-1/2}
\end{align*}
again by Lemma~\ref{even moments lemma} and Proposition~\ref{V and eta sizes prop} (and a routine calculation to evaluate the final integral exactly).
\end{proof}

Let $f_i(t)$ denote the density function of the random variable $X_i$ from Definition~\ref{Xi def}, and let $g_i(t) =  (2 \pi V_i)^{-1/2} e^{-t^2/2V_i}$ be the density function of a normal random variable with mean~$0$ and variance~$V_i$. We can bound the difference between these two functions by writing them in terms of their characteristic functions.

\begin{lemma} \label{density pointwise difference}
Assume GRH. For $i\in\{1,2,3\}$ and $q>q_0$, we have
\[
f_i(t) - g_i(t) \ll \min \big\{ V_i^{-3/2}, V_i^{-1/2} t^{-2} \big\} .
\]
\end{lemma}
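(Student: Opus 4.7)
The plan is to express $f_i(t) - g_i(t)$ via Fourier inversion and then bound the resulting integral in two different ways, one for each of the two bounds in the minimum.

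Since Lemma~\ref{Phi difference integrals} shows that $\Phi_i(x) - e^{-V_ix^2/2}$ is absolutely integrable, and since $g_i$ is manifestly the Fourier inverse of $e^{-V_ix^2/2}$, standard Fourier inversion gives
\[
f_i(t) - g_i(t) = \frac{1}{2\pi} \int_{-\infty}^\infty \bigl( \Phi_i(x) - e^{-V_ix^2/2} \bigr) e^{-ixt} \, dx.
\]
Taking absolute values inside the integral and invoking the first estimate of Lemma~\ref{Phi difference integrals} immediately yields $|f_i(t) - g_i(t)| \ll V_i^{-3/2}$, which handles the first half of the claimed minimum.

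For the second half, the plan is to integrate by parts twice in the variable~$x$, moving the two derivatives off $e^{-ixt}$ and onto $\Phi_i(x) - e^{-V_ix^2/2}$. Since $\frac{d^2}{dx^2} e^{-V_ix^2/2} = (V_i^2x^2 - V_i) e^{-V_ix^2/2}$, this will produce
\[
f_i(t) - g_i(t) = -\frac{1}{2\pi t^2} \int_{-\infty}^\infty \bigl( \Phi_i''(x) - (V_i^2x^2 - V_i) e^{-V_ix^2/2} \bigr) e^{-ixt} \, dx,
\]
after which the second estimate of Lemma~\ref{Phi difference integrals} bounds the right-hand side by $O(V_i^{-1/2} t^{-2})$.

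The main point to verify is that the boundary terms from the two integrations by parts vanish at~$\pm\infty$. For the Gaussian factor $e^{-V_ix^2/2}$ and its first derivative $-V_ix e^{-V_ix^2/2}$ this is obvious. For $\Phi_i$ itself the decay follows from Lemma~\ref{Phi bound large x}, and for $\Phi_i'$ the estimate $\Phi_i'(x) \ll V_i|x| e^{-\phi(q)|x|/8}$ noted in Remark~\ref{Phi' decays} gives the required decay as $|x|\to\infty$. With these decay statements, the boundary contributions at each step of the integration by parts are zero, so the computation is legitimate.

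Combining the two bounds gives $f_i(t) - g_i(t) \ll \min\{V_i^{-3/2},\, V_i^{-1/2}t^{-2}\}$ as claimed. The only delicate step is the justification of the double integration by parts; everything else is a direct application of the integral estimates already assembled in Lemma~\ref{Phi difference integrals}.
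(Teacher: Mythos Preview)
Your proof is correct and follows essentially the same route as the paper: Fourier inversion followed by the trivial bound from the first estimate of Lemma~\ref{Phi difference integrals}, and then two integrations by parts (justified via Remark~\ref{Phi' decays}) together with the second estimate of Lemma~\ref{Phi difference integrals}. The only difference is cosmetic---you spell out the vanishing of the boundary terms slightly more explicitly than the paper does.
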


\begin{proof}
We begin with the inverse Fourier transform formula
\[
f_i(t) - g_i(t) = \frac1{2\pi} \int_{-\infty}^\infty e^{-ixt} \big( \Phi_i(x) - e^{-V_ix^2/2} \big) \,dx.
\]
On one hand, this integral can be estimated trivially using the first estimate in Lemma~\ref{Phi difference integrals}:
\[
f_i(t) - g_i(t) \ll \int_{-\infty}^\infty \big| \Phi_i(x) - e^{-V_ix^2/2} \big| \,dx \ll V_i^{-3/2}.
\]
On the other hand, we can also integrate by parts twice before estimating, since both $\Phi_i(x)$ and $e^{-V_ix^2/2}$ and their first derivatives tend to~$0$ as~$|x|\to\infty$ (see Remark~\ref{Phi' decays}):
\begin{align*}
f_i(t) - g_i(t) &= \frac 1 {2\pi} \int_{-\infty}^\infty e^{-ixt}(\Phi_i(x) - e^{-V_ix^2/2}) \,dx \\
&= -\frac 1{2\pi t^2} \int_{-\infty}^{\infty} e^{-ixt} \big( \Phi_i''(x) - (V_i^2x^2-V_i) e^{-V_ix^2/2} \big) \,dx
\\
&\ll \frac 1{t^2} \int_{-\infty}^{\infty} \big| \Phi_i''(x) - (V_i^2x^2-V_i) e^{-V_ix^2/2} \big| \,dx \ll \frac {V_i^{-1/2}} {t^2}
\end{align*}
by the second estimate in Lemma~\ref{Phi difference integrals}.
\end{proof}

\begin{lemma} \label{density integral difference}
Assume GRH. For $i\in\{1,2,3\}$ and $q>q_0$, we have
\[
\int_{-\infty}^\infty \big| f_i(t) - g_i(t) \big| \,dt \ll V_i^{-1}.
\]
\end{lemma}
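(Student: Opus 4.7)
The plan is to combine the two pointwise estimates supplied by Lemma~\ref{density pointwise difference} by splitting the integration range at the crossover point where the two bounds agree.

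Setting $V_i^{-3/2} = V_i^{-1/2} t^{-2}$ gives $|t| = V_i^{1/2}$, which is the natural threshold. On the range $|t| \le V_i^{1/2}$, I will use the uniform bound $|f_i(t) - g_i(t)| \ll V_i^{-3/2}$ and integrate over an interval of length $2V_i^{1/2}$ to obtain a contribution of size $V_i^{-3/2} \cdot V_i^{1/2} = V_i^{-1}$. On the range $|t| \ge V_i^{1/2}$, I will use the decaying bound $|f_i(t) - g_i(t)| \ll V_i^{-1/2} t^{-2}$, whose integral is
\[
2 \int_{V_i^{1/2}}^\infty V_i^{-1/2} t^{-2}\,dt = 2 V_i^{-1/2} \cdot V_i^{-1/2} = 2 V_i^{-1}.
\]
Summing the two contributions gives the claimed bound $\ll V_i^{-1}$.

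There is no real obstacle here beyond choosing the correct splitting point; the work has already been done in the pointwise estimate of Lemma~\ref{density pointwise difference}, and the argument amounts to balancing the two competing bounds. The only point worth being careful about is that both bounds must be applied as $|f_i(t)-g_i(t)|$ (i.e.\ to the absolute value), which is legitimate since the proof of Lemma~\ref{density pointwise difference} actually produces absolute-value bounds from triangle inequalities on integrals of $|\Phi_i(x)-e^{-V_i x^2/2}|$ and $|\Phi_i''(x)-(V_i^2x^2-V_i)e^{-V_i x^2/2}|$.
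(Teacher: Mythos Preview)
Your argument is correct and matches the paper's proof essentially line for line: the paper also splits the integral at $|t|=V_i^{1/2}$, applies the two bounds from Lemma~\ref{density pointwise difference} on the respective ranges, and obtains $V_i^{-1}$ from each piece. Your remark that the pointwise lemma genuinely bounds the absolute value is accurate and is the only subtlety worth noting.
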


\begin{proof}
By Lemma~\ref{density pointwise difference},
\begin{align*}
\int_{-\infty}^\infty \big| f_i(t) - g_i(t) \big| \, dt &= \int_{|t| \le V_i^{1/2}} \big| f_i(t) - g_i(t) \big| \,dt +\int_{|t| \ge V_i^{1/2}} \big| f_i(t) - g_i(t) \big| \,dt \\
&\ll \int_{|t| \le V_i^{1/2}}  V_i^{-3/2}\,dt + \int_{|t| \ge V_i^{1/2}}\frac {V_i^{-1/2}} {t^2} \,dt \\
&\ll V_i^{-3/2} \cdot V_i^{1/2} + \frac {V_i^{-1/2}} {V_i^{1/2}} \ll  V_i^{-1}. \qedhere
\end{align*}
\end{proof}

We are now ready to compare the probability that the random variables~$X_i$ from Definition~\ref{Xi def}, which are relevant to prime number races, come in a particular order to the probability that normal random variables of the same mean and variance come in a particular order.

\begin{definition} \label{Yi def}
For $i\in\{1,2,3\}$, let $Y_i$ denote a normal variable with mean~$0$ and variance~$V_i$, with the convention that~$Y_1$, $Y_2$, and~$Y_3$ are mutually independent. Note that the density function of~$Y_i$ equals~$g_i(t)$ as defined before Lemma~\ref{density pointwise difference}.
\end{definition}

\begin{theorem} \label{probability difference theorem}
Assume GRH. For any permutation $(i,j,k)$ of $(1,2,3)$ and any $q>q_0$,
\[
\Pr(X_i > X_j > X_k) = \Pr\big(Y_i > Y_j > Y_k\big) + O\bigg( \frac 1 {\phi(q)\log q} \bigg).
\]
\end{theorem}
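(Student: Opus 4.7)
The plan is to exploit the mutual independence of the $X_i$ (from Definition~\ref{Xi def}) and of the $Y_i$ (from Definition~\ref{Yi def}), which lets us express both probabilities as triple integrals of products of one-dimensional densities over the same region. Specifically, I would write
\[
\Pr(X_i>X_j>X_k) = \iiint_{x_i>x_j>x_k} f_i(x_i) f_j(x_j) f_k(x_k)\, dx_i\, dx_j\, dx_k,
\]
and analogously for the $Y$'s with $f_\ell$ replaced by $g_\ell$. The difference of the two probabilities is then the triple integral of $f_if_jf_k - g_ig_jg_k$ over the same region.

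Next, I would use the standard telescoping identity
\[
f_if_jf_k - g_ig_jg_k = (f_i-g_i)f_jf_k + g_i(f_j-g_j)f_k + g_ig_j(f_k-g_k),
\]
which reduces the comparison to three terms, each of which contains exactly one factor of $f_\ell-g_\ell$ and two ``already controlled'' density factors. For each such term I would bound the integral over $\{x_i>x_j>x_k\}$ by the integral over all of $\R^3$, then factor it as a product of one-dimensional integrals. For instance, for the first term,
\[
\biggl| \iiint_{x_i>x_j>x_k} (f_i-g_i)f_jf_k\biggr| \le \int_{-\infty}^\infty |f_i(x_i)-g_i(x_i)|\,dx_i \cdot \int_{-\infty}^\infty f_j\,dx_j \cdot \int_{-\infty}^\infty f_k\,dx_k,
\]
the last two factors equaling~$1$ since $f_j$ and $f_k$ are probability densities, while the first is $O(V_i^{-1})$ by Lemma~\ref{density integral difference}. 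The other two terms are bounded identically, using that $g_j$ is a probability density in one case and that both $g_i$ and $g_j$ are probability densities in the other.

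Combining these three bounds yields
\[
\Pr(X_i>X_j>X_k) - \Pr(Y_i>Y_j>Y_k) \ll V_i^{-1} + V_j^{-1} + V_k^{-1},
\]
and then invoking Proposition~\ref{V and eta sizes prop}, which gives $V_\ell \gg \phi(q)\log q$ for each $\ell\in\{1,2,3\}$, converts this to the desired bound $O(1/\phi(q)\log q)$. The main technical work has already been done in the previous sections to arrive at the $L^1$ estimate in Lemma~\ref{density integral difference}; the only potential subtlety here is making sure that the factorization via independence is valid and that we do not lose a factor from restricting to the ordered region, but both of these are handled cleanly by the telescoping decomposition together with the nonnegativity of the densities involved.
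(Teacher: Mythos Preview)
Your proposal is correct and follows essentially the same approach as the paper's own proof: both express the two probabilities as triple integrals of the product densities over the ordered region, apply the identical telescoping decomposition $f_if_jf_k - g_ig_jg_k = (f_i-g_i)f_jf_k + g_i(f_j-g_j)f_k + g_ig_j(f_k-g_k)$, extend the region of integration to all of~$\R^3$, and then invoke Lemma~\ref{density integral difference} and Proposition~\ref{V and eta sizes prop} to finish.
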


\begin{proof}
Given the formulas
\begin{align*}
\Pr(X_i > X_j > X_k) &= \iiint\limits_{x>y>z} f_i(x)f_j(y)f_k(z) \,dx\,dy\,dz \\
\Pr(Y_i > Y_j > Y_k) &= \iiint\limits_{x>y>z} g_i(x)g_j(y)g_k(z) \,dx\,dy\,dz,
\end{align*}
we have
\begin{align*}
\big| \Pr &{} (X_i > X_j > X_k) - \Pr(Y_i > Y_j > Y_k) \big| \\
&= \bigg| \iiint\limits_{x>y>z} \big( f_i(x)f_j(y)f_k(z) - g_i(x)g_j(y)g_k(z) \big) \,dx\,dy\,dz \bigg| \\
&= \bigg| \iiint\limits_{x>y>z} \big( f_i(x)-g_i(x) \big) f_j(y)f_k(z) \,dx\,dy\,dz + \iiint\limits_{x>y>z} g_i(x) \big( f_j(y)-g_j(y) \big) f_k(z) \,dx\,dy\,dz \\
&\qquad{}+ \iiint\limits_{x>y>z} g_i(x)g_j(y) \big( f_k(z)-g_k(z) \big) \,dx\,dy\,dz \bigg| \\
&\le \iiint\limits_{\R^3} \big| f_i(x)-g_i(x) \big| f_j(y)f_k(z) \,dx\,dy\,dz + \iiint\limits_{\R^3} g_i(x) \big| f_j(y)-g_j(y) \big| f_k(z) \,dx\,dy\,dz \\
&\qquad{}+ \iiint\limits_{\R^3} g_i(x)g_j(y) \big| f_k(z)-g_k(z) \big| \,dx\,dy\,dz \\
&= \int_\R | f_i(x)-g_i(x) | \,dx + \int_\R | f_j(y)-g_j(y) | \,dy + \int_\R | f_k(z)-g_k(z) | \,dz,
\end{align*}
since each integral of a probability density function over~$\R$ equals~$1$. It follows from Lemma~\ref{density integral difference} and Proposition~\ref{V and eta sizes prop} that
\begin{align*}
\Pr(X_i > X_j > X_k) &= \Pr(Y_i > Y_j > Y_k) + O(V_i^{-1} + V_j^{-1} + V_k^{-1}) \\
&= \Pr(Y_i > Y_j > Y_k) + O\bigg( \frac 1 {\phi(q)\log q} \bigg)
\end{align*}
as desired.
\end{proof}

\section{Proof of the main theorem} \label{finally done section}

By this point we have essentially reduced the problem of asymptotically evaluating the prime-race density $\delta_{q;a_1,a_2,a_3}$ (still under the assumptions from Definition~\ref{a def}) purely to a problem in probability. In this section we complete the proof of Theorem~\ref{main theorem} (including showing how to derive the first assertion from the second) and Corollary~\ref{main cor}, with very little input needed from number theory. We begin with a classical (but perhaps not well known) formula for the probability that three normal variables assume a prescribed ordering.

\begin{lemma} \label{normal arctan lemma}
Let $N_a$, $N_b$, and $N_c$ denote mutually independent random variables with mean~$0$ and variances~$a$, $b$, and~$c$, respectively. Then
\[
\Pr(N_a > N_b > N_c) =  \frac 1 {2\pi} \arctan {\frac {\sqrt{ab+bc+ac}}{b}}.
\]
\end{lemma}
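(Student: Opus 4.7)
The plan is to reduce this three-variable ordering probability to a bivariate orthant probability that can be handled by a classical formula of Sheppard.

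First, I would introduce the auxiliary variables $U = N_a - N_b$ and $V = N_b - N_c$, so that the event $\{N_a > N_b > N_c\}$ is precisely $\{U > 0,\ V > 0\}$. Since $(N_a, N_b, N_c)$ is jointly normal with mean zero, so is $(U,V)$, and a direct covariance calculation (using independence of the $N_a, N_b, N_c$) gives
\[
\mathrm{Var}(U) = a + b, \qquad \mathrm{Var}(V) = b + c, \qquad \mathrm{Cov}(U,V) = -b,
\]
hence the correlation is $\rho = -b/\sqrt{(a+b)(b+c)}$.

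Next I would invoke Sheppard's classical formula: for a centered bivariate normal $(X,Y)$ with correlation $\rho$,
\[
\Pr(X > 0,\ Y > 0) = \frac{1}{4} + \frac{1}{2\pi}\arcsin\rho.
\]
Applied to $(U,V)$ (after standardizing each component, which does not affect signs), this yields
\[
\Pr(N_a > N_b > N_c) = \frac{1}{4} - \frac{1}{2\pi}\arcsin\frac{b}{\sqrt{(a+b)(b+c)}}.
\]

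The remaining step is an algebraic simplification converting the $\arcsin$ expression into the asserted $\arctan$. Setting $\theta = \arctan\bigl(\sqrt{ab+bc+ac}/b\bigr)$ and using the identity $(a+b)(b+c) = b^2 + (ab + bc + ac)$, one computes
\[
\cos\theta = \frac{b}{\sqrt{(a+b)(b+c)}}, \qquad \sin\theta = \frac{\sqrt{ab+bc+ac}}{\sqrt{(a+b)(b+c)}},
\]
so $\arcsin\bigl(b/\sqrt{(a+b)(b+c)}\bigr) = \pi/2 - \theta$, and substituting gives
\[
\Pr(N_a > N_b > N_c) = \frac{1}{4} - \frac{1}{2\pi}\Bigl(\frac{\pi}{2} - \theta\Bigr) = \frac{\theta}{2\pi} = \frac{1}{2\pi}\arctan\frac{\sqrt{ab+bc+ac}}{b},
\]
as claimed.

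There is no real obstacle here; the only non-trivial ingredient is Sheppard's orthant formula, which is standard. If one prefers a self-contained derivation, Sheppard's formula itself can be proved quickly by diagonalizing the bivariate normal and writing the orthant integral in polar coordinates, where the integrand becomes a constant and the probability reduces to measuring an angle — this is in fact how the final $\arctan$ arises geometrically.
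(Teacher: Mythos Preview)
Your proof is correct and follows essentially the same route as the paper: reduce to the orthant probability $\Pr(N_a-N_b>0,\ N_b-N_c>0)$, apply the classical formula $\tfrac14+\tfrac1{2\pi}\arcsin\rho$ with $\rho=-b/\sqrt{(a+b)(b+c)}$, and then convert $\arcsin$ to $\arctan$ via the identity $(a+b)(b+c)=b^2+(ab+bc+ac)$.
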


\begin{proof}
If~$Z_1$ and~$Z_2$ are normal random variables with mean~$0$ and correlation coefficient~$\rho$, there is a classical formula (see~\cite[equation~(4)]{RHB} for example) for the ``orthant probability'' that both random variables are positive:
\[
\Pr(Z_1 > 0 \text{ and } Z_2 > 0) = \frac14 + \frac1{2\pi} \arcsin \rho.
\]
We apply this formula with $Z_1=N_a - N_b$ and $Z_2=N_b - N_c$, which are indeed normal random variables with mean~$0$ and correlation coefficient
\begin{align}
\rho &= \frac{\E((N_a - N_b)(N_b - N_c))}{\sqrt{\E((N_a - N_b)^2)\cdot\E((N_b - N_c)^2)}} \label{used independence} \\
&= \frac{-\sigma^2(N_b)}{\sqrt{(\sigma^2(N_a) + \sigma^2(N_b))(\sigma^2(N_b) + \sigma^2(N_c))}} = \frac{-b}{\sqrt{ab+ac+bc+b^2}}, \notag
\end{align}
so that
\begin{align*}
\Pr(N_a > N_b > N_c) &= \Pr(N_a - N_b > 0 \text{ and } N_b - N_c > 0) \\
&= \frac14 -\frac1{2\pi} \arcsin\bigg(\frac b{\sqrt{ab+ac+bc+b^2}} \bigg).
\end{align*}
The lemma now follows from the identities (valid for $0\le x\le y$)
\[
\arcsin\bigg( \frac xy \bigg) = \arctan\bigg( \frac x{\sqrt{y^2-x^2}} \bigg) = \frac\pi2 - \arctan\bigg( \frac{\sqrt{y^2-x^2}}x \bigg). 
\qedhere
\]
\end{proof}

At this point we can complete the proof of an important special case of our main theorem, assuming the restriction from Definition~\ref{a def} that has been in force since that point. Recall the quantity $\delta_{q;a_1,a_2,a_3}$ from Definition~\ref{delta by E^*}.

\medskip\noindent{\em Proof of Theorem~\ref{main theorem}, under the assumption that~$a_1$, $a_2$, and~$a_3$ are either all quadratic residues or all quadratic non\-residues\mod q.}
We may assume that $q>q_0$ from Definition~\ref{q0 def}, since the asymptotic formula is trivially valid for any bounded range of~$q$.
We simply combine the three equalities in Proposition~\ref{E is X prop},
Theorem~\ref{probability difference theorem} (using the notation of Definition~\ref{Yi def}),
and Lemma~\ref{normal arctan lemma}, obtaining
\begin{align*}
\delta_{q;a_1,a_2,a_3} = \Pr(X_1>X_2>X_3) &= \Pr\big(Y_1 > Y_2 > Y_3\big) + O\bigg( \frac 1 {\phi(q)\log q} \bigg) \\
&= \frac 1 {2\pi} \arctan {\frac {\sqrt{V_1V_2+V_2V_3+V_1V_3}}{V_2}} + O\bigg(\frac 1 {\phi(q)\log q}\bigg)
\end{align*}
as claimed.
\qed

It is not difficult to remove the assumption that~$a_1$, $a_2$, and~$a_3$ are either all quadratic residues or all quadratic non\-residues\mod q, at least if we allow ourselves the larger error term asserted in Theorem~\ref{main theorem}. Again all we require is a quick lemma from probability.

\begin{lemma} \label{subevent lemma}
Let~$Z_1$, $Z_2$, and~$Z_3$ be random variables, and let~$\mu_1$, $\mu_2$, and~$\mu_3$ be real numbers. The event
\begin{equation} \label{sym diff event}
\text{exactly one of } (Z_1 > Z_2 > Z_3) \text{ and } (Z_1 + \mu_1 > Z_2 + \mu_2 > Z_3 + \mu_3) \text{ is true}
\end{equation}
is contained in the event
\begin{equation} \label{simpler event}
\big(|Z_1 - Z_2| \le |\mu_1-\mu_2|\big) \text{ or } \big(|Z_2 - Z_3| \le |\mu_2-\mu_3|\big).
\end{equation}
\end{lemma}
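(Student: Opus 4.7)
The plan is to verify the set-theoretic inclusion at the level of sample points~$\omega$: assuming $\omega$ lies in the symmetric difference $A \triangle B$, where $A = \{Z_1 > Z_2 > Z_3\}$ and $B = \{Z_1 + \mu_1 > Z_2 + \mu_2 > Z_3 + \mu_3\}$, I will show that at least one of the inequalities $|Z_1 - Z_2| \le |\mu_1 - \mu_2|$ and $|Z_2 - Z_3| \le |\mu_2 - \mu_3|$ holds at~$\omega$. The statement has no genuine probabilistic content beyond event algebra; everything reduces to elementary real inequalities.

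First I would split the symmetric difference as $A \triangle B = (A \setminus B) \cup (B \setminus A)$. The two pieces are interchangeable under the substitution $(Z_i, \mu_i) \mapsto (Z_i + \mu_i, -\mu_i)$ (which swaps the two orderings and negates the $\mu$-differences without altering the hypothesized bounds), so it suffices to handle $\omega \in A \setminus B$.

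For such $\omega$, the failure of $B$ means that at least one of the two strict inequalities defining $B$ fails, so either $Z_1 + \mu_1 \le Z_2 + \mu_2$ or $Z_2 + \mu_2 \le Z_3 + \mu_3$. In the first subcase, I would combine this with the hypothesis $Z_1 > Z_2$ (which holds because $\omega \in A$) to get $0 < Z_1 - Z_2 \le \mu_2 - \mu_1$. This chain of inequalities forces $\mu_2 - \mu_1 > 0$, so $|\mu_1 - \mu_2| = \mu_2 - \mu_1 \ge Z_1 - Z_2 = |Z_1 - Z_2|$, yielding the first of the two desired inequalities. The second subcase is identical after shifting the indices by one.

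The only thing to be careful about is keeping the direction of inequalities straight across the flip; in particular, one must note that when an inequality $u > v$ flips to $u + \mu \le v + \nu$ under a shift, the combined relation $0 < u - v \le \nu - \mu$ automatically controls the sign of $\nu - \mu$ and hence allows the conversion of $u - v$ to $|u - v|$ and $\nu - \mu$ to $|\mu - \nu|$. That's the whole argument; there is no obstacle beyond careful bookkeeping.
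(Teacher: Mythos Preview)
Your argument is correct and is essentially the same as the paper's, just organized as a direct proof rather than the paper's contrapositive: the paper shows that if both $|Z_1-Z_2|>|\mu_1-\mu_2|$ and $|Z_2-Z_3|>|\mu_2-\mu_3|$ then each pairwise inequality is unchanged by the shift, which is exactly the contrapositive of your case analysis.
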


\begin{proof}
First observe that
\begin{itemize}
\item if $|Z_1 - Z_2| > |\mu_1-\mu_2|$, then the two inequalities $Z_1 > Z_2$ and $Z_1 + \mu_1 > Z_2 + \mu_2$ are either both true or both false;
\item if $|Z_2 - Z_3| > |\mu_2-\mu_3|$, then the two inequalities $Z_2 > Z_3$ and $Z_2 + \mu_2 > Z_3 + \mu_3$ are either both true or both false.
\end{itemize}
It follows that if both
$
\big(|Z_1 - Z_2| > |\mu_1-\mu_2|\big) \text{ and } \big(|Z_2 - Z_3| > |\mu_2-\mu_3|\big)
$
are true, then
$
(Z_1 > Z_2 > Z_3) \text{ and } (Z_1 + \mu_1 > Z_2 + \mu_2 > Z_3 + \mu_3) \text{ are either both true or both false;}
$
this implication is the contrapositive of the proposition. 
\end{proof}

At this point, we no longer assume that~$a_1$, $a_2$, and~$a_3$ have the same quadratic nature\mod q, although the congruences~\eqref{stw} are still in force.

\begin{proof}[Proof of Theorem~\ref{main theorem} in the general case]
When we do not assume that~$a_1$, $a_2$, and~$a_3$ are either all quadratic residues or all quadratic non\-residues\mod q, we may still use the random variables~$X_i$ and~$Y_i$ from Definitions~\ref{Xi def} and~\ref{Yi def}. However, we cannot rely on full cancellation of the constants in Lemma~\ref{E* decomp lemma}, and so Proposition~\ref{E is X prop} must be modified: the distribution of the vector-valued function $\big( E^{*}(x; q, a_1), E^{*}(x; q, a_2), E^{*}(x; q, a_3) \big)$ is the same as the distribution of the random variable $(\mu_1,\mu_2,\mu_3) + X_{1,2,3}$, where
\[
(\mu_1,\mu_2,\mu_3) = \big( c_q(a_2)+c_q(a_3)-2c_q(a_1) , c_q(a_1)+c_q(a_3)-2c_q(a_2) , c_q(a_1)+c_q(a_2)-2c_q(a_3) \big).
\]
Consequently, the density we want to evaluate now takes the form
\begin{align*}
\delta_{q;a_1,a_2,a_3} &= \Pr(X_1+\mu_1>X_2+\mu_2>X_3+\mu_3) \\
&= \Pr(Y_1+\mu_1>Y_2+\mu_2>Y_3+\mu_3) + O\bigg( \frac1{\phi(q)\log q} \bigg)
\end{align*}
by the proof of Theorem~\ref{probability difference theorem}.
We deduce from Lemma~\ref{subevent lemma} that
\begin{multline*}
\big| \Pr(Y_1+\mu_1>Y_2+\mu_2>Y_3+\mu_3) - \Pr(Y_1>Y_2>Y_3) \big| \\
\le \Pr\big(|Y_1 - Y_2| \le |\mu_1-\mu_2|\big) + \Pr\big(|Y_2 - Y_3| \le |\mu_2-\mu_3|\big).
\end{multline*}
Since the $Y_i$ are mutually independent, $Y_1-Y_2$ is a normal random variable with variance $V_1+V_2$, and hence its density function is bounded pointwise by the constant $1/\sqrt{2\pi(V_1+V_2)}$; the analogous bound applies to the density function of $Y_2-Y_3$. In particular,
\begin{multline*}
\big| \Pr(Y_1+\mu_1>Y_2+\mu_2>Y_3+\mu_3) - \Pr(Y_1>Y_2>Y_3) \big| \\
\le \frac{2|\mu_1-\mu_2|}{\sqrt{2\pi(V_1+V_2)}} + \frac{2|\mu_2-\mu_3|}{\sqrt{2\pi(V_2+V_3)}}.
\end{multline*}
However, it is a standard fact that $c_q(a) \ll_\ep q^\ep$ (see~\cite[Definitions~1.2 and~2.4]{FiM}). Since each $V_i \gg 1/\sqrt{\phi(q)\log q}$ by Proposition~\ref{V and eta sizes prop}, we deduce that
\begin{equation*}
\big| \Pr(Y_1+\mu_1>Y_2+\mu_2>Y_3+\mu_3) - \Pr(Y_1>Y_2>Y_3) \big| \ll_\ep \frac{q^\ep}{\sqrt{\phi(q)\log q}} \ll_\ep q^{-1/2+\ep}.
\end{equation*}
In conclusion,
\begin{align*}
\delta_{q;a_1,a_2,a_3} &= \Pr(Y_1+\mu_1>Y_2+\mu_2>Y_3+\mu_3) + O\bigg( \frac1{\phi(q)\log q} \bigg) \\
&= \Pr(Y_1>Y_2>Y_3) + O_\ep\bigg( q^{-1/2+\ep} + \frac1{\phi(q)\log q} \bigg) \\
&= \frac1{2\pi} \arctan \frac{\sqrt{V_1V_2+V_1V_3+V_2V_3}}{V_2} + O_\ep(q^{-1/2+\ep})
\end{align*}
by Lemma~\ref{normal arctan lemma}, as claimed.
\end{proof}

\begin{proof}[Proof of Corollary~\ref{main cor}]
Since $V_i = 4V(q) (1+\eta_i)$ by Definition~\ref{V def}, we may restate Theorem~\ref{main theorem} as
\begin{multline*}
\Pr(X_i > X_j > X_k) \\
= \frac 1 {2\pi} \arctan {\frac {\sqrt{  (1+\eta_i)  (1+\eta_j)+  (1+\eta_i) (1+\eta_k)+ (1+\eta_j) (1+\eta_k)}}{ (1+\eta_j)}} + O_\ep(q^{-1/2+\ep}).
\end{multline*}
It is an easy calculus exercise to compute the linear approximation at the origin to the twice-differentiable function above, obtaining
\begin{align*}
\Pr(X_i > X_j > X_k) &=  \frac16 + \frac{\eta_i}{8\pi\sqrt3} - \frac{\eta_j}{4\pi\sqrt3} + \frac{\eta_k}{8\pi\sqrt3} + O(\eta_i^2 + \eta_j^2+ \eta_k^2) + O_\ep(q^{-1/2+\ep}).
\end{align*}
The corollary now follows from the estimate $\eta_i \ll (\log\log q)/{\log q}$ given in Proposition~\ref{V and eta sizes prop}.
\end{proof}

It turns out that while it is possible for the $\eta_i$ to be as large as $\Omega((\log\log q)/\log q)$, they are usually rather smaller; in such a situation, the error term given in Corollary~\ref{main cor} can be reduced considerably. Indeed, an asymptotic formula for $\delta_{q;a_1,a_2,a_3}$ was given by Lamzouri in a form where the secondary main terms and error terms had a more explicit dependence on arithmetic quantities like the $V_i$, including the one we define now.

\begin{definition} \label{B def}
For any reduced residue classes~$a$ and~$b$ modulo~$q$, define
\begin{align*}
B_q(a,b) &= \sum_{\substack{\chi \mod q \\ \chi \ne \chi_0}} \big( \chi(ab^{-1}) + \chi(ba^{-1}) \big) b_+(\chi).
\end{align*}
(Note for example that $B_q(a,a)=V(q)$ from Definition~\ref{V def}.)
\end{definition}

The following result, which is~\cite[Corollary~2.3]{lam} translated into our notation, applies to {\em any} distinct reduced residues~$a_1$, $a_2$, and~$a_3\mod q$.

\begin{theorem}[Lamzouri]
We have
\begin{multline} \label{lam result}
\delta_{q;a_1,a_2,a_3} = \frac 1 6 + \frac 1 {4 \sqrt\pi} \frac{c_q(a_3) - c_q(a_1)}{\sqrt{V(q)}} + \frac 1 {4 \pi \sqrt{3}} \frac{B_q(a_1,a_2)+ B_q(a_2,a_3) - 2B_q(a_1,a_3)}{V(q)} \\
+ O \bigg(\frac {c_q(1)^2}{V(q)} +\frac {\max_{1\le i<j\le 3}|B_q(a_i,a_j)|^2}{V(q)^2} \bigg).
\end{multline}
\end{theorem}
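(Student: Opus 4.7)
The plan is to run the same overall strategy as the proof of Theorem~\ref{main theorem}, but without invoking the special algebraic structure from Section~\ref{almost unanimous section} that produces independence. By Lemma~\ref{Exqa lemma} and Proposition~\ref{general E to Z prop}, the limiting logarithmic distribution of $\bigl(E(x;q,a_1),E(x;q,a_2),E(x;q,a_3)\bigr)$ coincides with the distribution of $\tilde X=(\tilde X_1,\tilde X_2,\tilde X_3)$, where
\[
\tilde X_i = -c_q(a_i) + 2\Re\sum_{\chi\neq\chi_0}\overline{\chi}(a_i)\,Z_\chi,
\]
so that $\delta_{q;a_1,a_2,a_3}=\Pr(\tilde X_1>\tilde X_2>\tilde X_3)$ and the problem is reduced to an orthant probability for~$\tilde X$.

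A moment computation, using $\E[Z_\chi]=0$, $\E[Z_\chi^2]=0$, $\E[|Z_\chi|^2]=b_+(\chi)$ and the mutual independence of the $Z_\chi$ under LI, gives $\E[\tilde X_i]=-c_q(a_i)$ together with
\[
\operatorname{Cov}(\tilde X_i,\tilde X_j) = \sum_{\chi\neq\chi_0}\bigl(\chi(a_ia_j^{-1})+\chi(a_ja_i^{-1})\bigr)b_+(\chi)=B_q(a_i,a_j),
\]
with $B_q(a_i,a_i)=V(q)$ by Definition~\ref{V def}. Thus~$\tilde X$ has mean vector $(-c_q(a_i))_i$ and covariance matrix whose diagonal entries equal~$V(q)$ and whose off-diagonals are the~$B_q(a_i,a_j)$; the task becomes to compare the orthant probability for~$\tilde X$ with that for the trivariate Gaussian~$Y$ having the same mean and covariance.

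The Gaussian approximation step is the main obstacle, and it is here that the absence of almost-unanimity hurts. The joint characteristic function of~$\tilde X$ still factors over characters as
\[
e^{-i\sum_j s_j c_q(a_j)}\prod_{\chi\neq\chi_0}F\Bigl(\bigl|\textstyle\sum_j s_j\overline{\chi}(a_j)\bigr|,\chi\Bigr)
\]
in the notation of Definition~\ref{F Phi def}, but it no longer factors over the coordinates~$i\in\{1,2,3\}$, so the one-variable Fourier inversion of Section~\ref{final assembly section} (which relied on the product density $f_1(x)f_2(y)f_3(z)$) is unavailable. The plan is to generalize the pointwise Bessel estimates of Sections~\ref{Bessel section}--\ref{second deriv section} uniformly in the three-dimensional Fourier variable~$s\in\R^3$: a local power-series expansion of $\log F$ near $s=0$ gives the Gaussian main term and a quartic-in-$s$ error, while Bessel decay bounds in the spirit of Lemma~\ref{Bessel product missing a few} cover the bulk of Fourier space. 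Integrating the resulting $L^1$ bound on the difference of characteristic functions over~$\R^3$ and inverting, as in the proof of Theorem~\ref{probability difference theorem}, yields $\Pr(\tilde X_1>\tilde X_2>\tilde X_3)=\Pr(Y_1>Y_2>Y_3)+O(1/(\phi(q)\log q))$, comfortably absorbed into the error of~\eqref{lam result}.

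The Gaussian orthant probability itself is evaluated by Taylor expansion around the symmetric base point $\mu=0$, $\Sigma=V(q)I$, at which the probability equals~$\tfrac16$. Changing variables to $D_1=Y_1-Y_2$ and $D_2=Y_2-Y_3$ reduces the question to a bivariate normal orthant probability, to which the classical formula $\Pr(D_1>0,D_2>0)=\tfrac14+\tfrac1{2\pi}\arcsin\rho$ from Lemma~\ref{normal arctan lemma} applies. Differentiating in the means at the symmetric point, symmetry forces the partial in $\mu_2$ to vanish (the median of three i.i.d.\ symmetric variables has mean zero), while the partials in $\mu_1$ and $\mu_3$ evaluate to $\pm\tfrac16\E[Z_{(3)}]/V(q)=\pm 1/\bigl(4\sqrt{\pi V(q)}\bigr)$ using $\E[\max(Z_1,Z_2,Z_3)]=3/(2\sqrt\pi)$ for three i.i.d.\ standard normals; this produces the $c_q$-term of~\eqref{lam result}. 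Differentiating in the off-diagonal covariances, a short expansion of $\operatorname{Cov}(D_1,D_2)/\sqrt{\operatorname{Var}(D_1)\operatorname{Var}(D_2)}$ perturbs~$\rho$ away from $-\tfrac12$ by $(B_q(a_1,a_2)+B_q(a_2,a_3)-2B_q(a_1,a_3))/(4V(q))$, and multiplying by $\tfrac1{2\pi}(d/d\rho)\arcsin\rho\big|_{\rho=-1/2}=1/(\pi\sqrt 3)$ produces the $B_q$-term with precisely the stated coefficient. The quadratic remainder of the Taylor expansion is of size $O\bigl(c_q(1)^2/V(q)+\max|B_q(a_i,a_j)|^2/V(q)^2\bigr)$, matching the asserted error.
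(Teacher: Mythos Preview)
The paper does not prove this theorem at all: it is quoted verbatim from Lamzouri's paper~\cite[Corollary~2.3]{lam} and used only for comparison with Corollary~\ref{main cor}. So there is no ``paper's own proof'' to match; what you have written is an independent attempt to reconstruct Lamzouri's argument.

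Your probability computations are correct. The covariance calculation $\operatorname{Cov}(\tilde X_i,\tilde X_j)=B_q(a_i,a_j)$ is right, the evaluation $\partial_{\mu_1}\Pr(Y_1>Y_2>Y_3)\big|_{\mu=0}=1/(4\sqrt{\pi V(q)})$ is right (and the vanishing of $\partial_{\mu_2}$ by symmetry), the linearization $\delta\rho=(B_{12}+B_{23}-2B_{13})/(4V(q))$ is right, and the chain rule through $\tfrac1{2\pi}\arcsin\rho$ at $\rho=-\tfrac12$ gives exactly the coefficient $1/(4\pi\sqrt3)$. The quadratic remainder matches the stated error term.

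The genuine gap is the Gaussian approximation step. You write that the plan is to ``generalize the pointwise Bessel estimates of Sections~\ref{Bessel section}--\ref{second deriv section} uniformly in the three-dimensional Fourier variable~$s$'', but this is precisely where the present paper's methods break down without almost-unanimity, and it is the substantive content of~\cite{lam}. The joint characteristic function is $\prod_{\chi\neq\chi_0}F\bigl(\bigl|\sum_j s_j\overline\chi(a_j)\bigr|,\chi\bigr)$, and for large $|s|$ the argument $\bigl|\sum_j s_j\overline\chi(a_j)\bigr|$ need not be large for any individual~$\chi$; one must show that it is large for \emph{enough} characters simultaneously, which requires an additional combinatorial argument exploiting the distinctness of $a_1,a_2,a_3$ (and is typically carried out after passing to the two-dimensional differences $D_1,D_2$). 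The one-variable envelope argument of Lemma~\ref{Bessel sorta decreasing lemma} and the second-derivative decomposition of Lemma~\ref{Phi'' decomposition lemma} have no direct multivariate analogue here. So your outline is sound and your endpoint calculations are accurate, but the middle step is a sketch of a nontrivial theorem rather than a proof.
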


One motivation for stating this result is to calculate the secondary main terms in our special case $a_1^2\equiv a_2^2\equiv a_3^2\mod q$ and $c_q(a_1)=c_q(a_2)=c_q(a_3)$ in the notation of equation~\eqref{cq def}. Under these assumptions,
\begin{align*}
B_q(a_i,a_j) &= 2 \bigg( \sum_{\substack{\chi \mod q \\ \chi(a_i) = \chi(a_j)}} b_+(\chi) - \sum_{\substack{\chi \mod q \\ \chi(a_i) = -\chi(a_j)}} b_+(\chi) - b_+(\chi_0) \bigg) = \frac{V_0 + V_k - V_i - V_j}{16} - 2b_+(\chi_0)
\end{align*}
in the notation of Definition~\ref{V def} (and thus, from Proposition~\ref{V and eta sizes prop}, we have $B_q(a_i,a_j)^2/V(q)^2 \ll (\log\log q)^2/(\log q)^2$). In particular,
\[
B_q(a_1,a_2)+ B_q(a_2,a_3) - 2B_q(a_1,a_3) = \frac{V_1-2V_2+V_3}8.
\]
Using this identity in equation~\eqref{lam result}, and making the change of variables $V_i = 4V(q)(\eta_i+1)$ from Definition~\ref{V def}, reveals that the secondary main terms in~\eqref{lam result} are exactly equal to those in Corollary~\ref{main cor}. (As we see, Lamzouri's result gives yet another secondary main term in the case where $c_q(a_1)\ne c_q(a_3)$, while our method simply gives an error term of that order of magnitude.)

\section{Discussion} \label{discussion section}

We have already discussed, at the end of Section~\ref{almost unanimous section}, the algebraic aspects of our special three-way races and the prospects for generalizing our method in that regard. In this final section we make some additional remarks about the analytic aspects of this paper, under the continuing assumptions of GRH and~LI.

The moral we hope to emphasize is that, {\em to give asymptotic formulas for prime number race densities $\delta_{q;a_1,\dots,a_r}$ from Definition~\ref{delta by pi} whose error terms are small, one should always utilize a main term that is an ``ordering probability'' for a multivariate normal distribution}. If $Z=(Z_1,\dots,Z_r)$ is a normal random variable in~$\R^r$ whose covariance matrix is identical to the matrix of covariances $B_q(a_i,a_j)$ (from Definition~\ref{B def}) corresponding to our prime number race, then the difference between $\delta_{q;a_1,\dots,a_r}$ and $\Pr(Z_1>\cdots>Z_r)$ will decay like a negative power of~$q$, as in Theorem~\ref{main theorem}. Therefore it is best, we claim, to primarily evaluate $\delta_{q;a_1,\dots,a_r}$ as an ordering probability of this type; further asymptotic evaluations can then be made for the ordering probability itself, an object that is purely probabilistic.

This idea is certainly well established in this topic on a heuristic level, being motivated, for example, by the central limit results established by Rubinstein and Sarnak~\cite{RS}. Using an ordering probability as the main term is implicit in the work of Lamzouri~\cite{lamother,lam} (though the comparisons there were carried out mainly on the characteristic function side) and more explicitly in the work of Harper and Lamzouri~\cite[Section~4.1]{HL}.

While we wanted to describe the construction of our atypical normalizations $E^*(x;q,a)$ of the error terms in prime counting functions for arithmetic progressions, and how they resulted in independent random variables that allowed for a much more elementary analysis, that independence is not necessary to our moral, as it happens. The multivariate normal random variable~$Z$ alluded to above is permitted to have correlations among the coordinates, and even to have nonzero means in each coordinate (arising, in our case, from Chebyshev's bias against quadratic residues). Because the matrix of covariances $B_q(a_i,a_j)$ turns out to be close to $V(q)$ (from Definition~\ref{V def}) times the identity matrix, and the vector of means turns out to be small compared to $\sqrt{V(q)}$, it is possible to produce asymptotics for these ordering probabilities in terms of the means and covariances, as was done in~\cite{lam}.

Moreover, in the case where the residues~$a_j$ are all quadratic residues or all quadratic nonresidues (so that the means of the individual limiting distributions are all equal), these ordering probabilities can actually be evaluated in closed form for $r\le 4$, because they are equivalent (by the method of proof of Lemma~\ref{normal arctan lemma}) to orthant probabilities in at most three dimensions. (The cases $r\le2$ are trivial because zero- and one-dimensional orthant probabilities are trivial under the assumption of equal means.) Recalling Definitions~\ref{b+ def}, \ref{V def}, and~\ref{B def}, we define the further notation
\begin{align*}
\rho_{12} &= \frac{-V(q)+B_q(a_1,a_2)-B_q(a_1,a_3)+B_q(a_2,a_3)}{2\sqrt{(V(q) - B_q(a_1,a_2))(V(q) - B_q(a_2,a_3))}} \\
\rho_{13} &= \frac{B_q(a_1,a_3)-B_q(a_2,a_3)-B_q(a_1,a_4)+B_q(a_2,a_4)}{2\sqrt{(V(q) - B_q(a_1,a_2))(V(q) - B_q(a_3,a_4))}} \\
\rho_{23} &= \frac{-V(q)+B_q(a_2,a_3)-B_q(a_2,a_4)+B_q(a_3,a_4)}{2\sqrt{(V(q) - B_q(a_2,a_3))(V(q) - B_q(a_3,a_4))}}
\end{align*}
Then for {\em any} distinct reduced residues~$a_1$, $a_2$, $a_3$, and~$a_4\mod q$, known formulas for orthant probabilities~\cite[equations~(4) and~(5)]{RHB} imply that the asymptotic formulas
\begin{align*}
\delta_{q;a_1,a_2,a_3} &\sim \frac14 + \frac1{2\pi} \arcsin \rho_{12} \\
\delta_{q;a_1,a_2,a_3,a_4} &\sim \frac18 + \frac1{4\pi} \big( \arcsin \rho_{12} + \arcsin \rho_{13} + \arcsin \rho_{23} \big)
\end{align*}
hold up to a negative power of~$q$. As a reality check, if we exploit the fact that the $B_q(a_i,a_j)$ are negligible in size compared to $V(q)$, then $\rho_{12}\sim\rho_{23}\sim-\frac12$ and $\rho_{13}=o(1)$, which leads to the evaluations $\delta_{q;a_1,a_2,a_3} \sim \frac16$ and $\delta_{q;a_1,a_2,a_3,a_4} \sim \frac1{24}$ as expected from the central limit theorems of~\cite{RS}.

In this paradigm, we have approximated our number-theoretic limiting logarithmic distributions by normal distributions with the same mean and variance. Of course, the higher (even central) moments of the two distributions will not match in general, which is a source of error when passing from one measure to the other. It is worth mentioning that for two-way races, asymptotic formulas for the densities exist~\cite[Theorem~1.1]{FiM} that incorporate the contribution from higher moments and, correspondingly, have error terms that can be made as small as an arbitrary power of~$q$. It would be an interesting project to attempt to produce analogous formulas for prime number races with three or more contestants.

Nevertheless, we hope the viewpoint that prime race densities are best approximated explicitly by ordering probabilities for multivariate normal random variables has some illuminating benefit to practitioners of comparative prime number theory.

\section*{Acknowledgments}

The authors thank Adam Harper and Youness Lamzouri for very helpful conversations related to the topic of this paper. The second author was supported by a Natural Sciences and Engineering Research Council of Canada Discovery Grant.

\bibliographystyle{amsplain}
\bibliography{DCTWPNR}

\end{document}